\pgfplotsset{compat=1.18} 
\numberwithin{equation}{section}
\newcommand{\A}{\mathcal{A}}
\renewcommand{\S}{\mathcal{S}}
\newcommand{\T}{\mathbb{T}}
\newcommand{\conj}[1]{\overline{#1}}
\newcommand{\D}{\mathbb{D}}
\newcommand{\R}{\mathbb{R}}
\newcommand{\Po}{\mathcal{P}}
\newcommand{\cD}{\conj{\mathbb{D}}}
\newcommand{\dist}[2]{\text{dist}( #1, #2 ) }
\newcommand{\m}{\textit{m}}
\newcommand{\hil}{\mathcal{H}}
\newcommand{\hb}{\mathcal{H}(b)}
\renewcommand\Re{\operatorname{Re}}
\newcommand{\coreh}[2]{\text{core}_{#1}(#2)}
\newcommand{\resh}[2]{\text{res}_{#1}(#2)}
\newcommand{\core}[1]{\text{core}(#1)}
\newcommand{\res}[1]{\text{res}(#1)}
\newcommand{\BCh}{\mathcal{BC}_h}
\newcommand{\BC}{\mathcal{BC}}
\newcommand{\assoch}[2]{\text{Assoc}\BC_{#1}(#2)}
\newcommand{\assoc}[1]{\text{Assoc}\BC(#1)}
\newtheorem{mainthm}{Theorem}
\newtheorem{thm}{Theorem}[section]
\newtheorem*{thm*}{Theorem}
\newtheorem{lem}[thm]{Lemma}
\newtheorem{cor}[thm]{Corollary}
\newtheorem*{cor*}{Corollary}
\newtheorem{prop}[thm]{Proposition}
\theoremstyle{definition}
\theoremstyle{definition}
\newtheorem{defn}[thm]{Definition}
\newtheorem*{defn*}{Definition}
\newtheorem{question}{Question}
\newtheorem{claim*}{Claim}
\let\oldtocsection=\tocsection
\let\oldtocsubsection=\tocsubsection
\let\oldtocsubsubsection=\tocsubsubsection
\renewcommand{\tocsection}[2]{\hspace{0em}\oldtocsection{#1}{#2}}
\renewcommand{\tocsubsection}[2]{\hspace{1em}\oldtocsubsection{#1}{#2}}
\renewcommand{\tocsubsubsection}[2]{\hspace{2em}\oldtocsubsubsection{#1}{#2}}
\def\section{\@startsection{section}{1}%
\z@{.7\linespacing\@plus\linespacing}{.5\linespacing}%
{\large\scshape\centering}}
\title[Weighted Korenblum-Roberts theory]{\large Weighted Korenblum-Roberts theory}
\author{Bartosz Malman}
\address{Division of Mathematics and Physics, 
        Mälardalen University,
		721 23 Västerås, Sweden}
\email{bartosz.malman@mdu.se} 
\begin{document}

\begin{abstract}
The classical Korenblum-Roberts Theorem characterizes the cyclic singular inner functions in the Bergman spaces of the unit disk $\mathbb{D}$ as those for which the corresponding singular measure vanishes on Beurling-Carleson sets of Lebesgue measure zero. We solve the weighted variant of the problem in which the Bergman space is replaced by a $\mathcal{P}^t(\mu)$ space, the closure of analytic polynomials in a Lebesgue space $\mathcal{L}^t(\mu)$ corresponding to a measure of the form $dA_\alpha + w\, d\m$, with $dA_\alpha$ being the standard weighted area measure on $\mathbb{D}$, $d\m$ the Lebesgue measure on the unit circle $\mathbb{T}$, and $w$ a general weight on $\mathbb{T}$. We characterize when $\mathcal{P}^t(\mu)$ of this form is a space of analytic functions on $\mathbb{D}$ by computing the Thomson decomposition of the measure $\mu$. The structure of the decomposition is expressed in terms of what we call the family of \textit{associated Beurling-Carleson sets}. We characterize the cyclic singular inner functions in the analytic $\mathcal{P}^t(\mu)$ spaces as those for which the corresponding singular measure vanishes on the family of associated Beurling-Carleson sets. Unlike the classical setting, Beurling-Carleson sets of both zero and positive Lebesgue measure appear in our description. As an application of our results, we complete the characterization of the symbols $b:\mathbb{D} \to \mathbb{D}$ which generate a de Branges-Rovnyak space with a dense subset of functions smooth on $\mathbb{T}$. The characterization is given explicitly in terms of the modulus of $b$ on $\T$ and the singular measure corresponding to the singular inner factor of $b$. Our proofs involve Khrushchev's techniques of simultaneous polynomial approximations and linear programming ideas of Korenblum, combined with recently established constrained $\mathcal{L}^1$-optimization tools.
\end{abstract}

\thanks{This research is supported by Vetenskapsrådet (VR2024-03959).}


\maketitle

\clearpage

\tableofcontents

\clearpage

\section{Introduction}

\subsection{A bit of background} Let $\Po$ denote the set of analytic polynomials in a single complex variable $z$, \[\Po := \Bigg\{ \sum_{i=0}^n p_iz^i : p_i \in \mathbb{C}, n \in \mathbb{N} \cup \{ 0 \} \Bigg\}\] and let $\mu$ be a finite non-negative Borel measure compactly supported in the complex plane $\mathbb{C}$. Given such a measure and a number $t \in (0,\infty)$, the closure of $\Po$ in the Lebesgue space $\mathcal{L}^t(\mu)$ is customarily denoted by $\Po^t(\mu)$. If $d\mu = d\m$, the Lebesgue measure on the unit circle $\T = \{ z \in \mathbb{C} : |z| = 1\}$, then $\Po^t(\mu)$ is the classical Hardy space $\hil^t$ of analytic functions on the unit disk $\D = \{ z \in \mathbb{C} : |z| < 1\}$, while if $d\mu = dA$ is the area measure on $\D$, then $\Po^t(\mu)$ is the Bergman space of area $t$-integrable functions on $\D$. In contrast, if $d\mu = dx_I$, the Lebesgue measure on an interval $I \subset \mathbb{R}$, then $\Po^t(\mu) = \mathcal{L}^t(\mu)$ is the Lebesgue space itself, a space of measurable functions exhibiting no analytic properties. In general, the nature of $\Po^t(\mu)$ is complicated, and the three mentioned examples illustrate in a sense the extreme cases. By the decomposition theorem of Thomson from \cite{thomson1991approximation}, the space $\Po^t(\mu)$ can always be decomposed into pieces, $\Po^t(\mu) = \oplus_i \Po^t(\mu_i)$, which with at most one exception are \textit{analytic}, in the sense that functions in the space are analytic on some open set. The potential exceptional piece is a full Lebesgue space.

The identification of the pieces $\mu_i$ for a general planar measure $\mu$ is a task too ambitious for the present work. In regards to this we mention that identification of the pieces in the Thomson decomposition of a deceptively simple-looking measure $\mu$ composed of two pieces, a radially weighted area measure on $\D$ with very fast decay near $\T$, and a weighted Lebesgue measure on $\T$, is essentially equivalent to Volberg's deep theorem from \cite{volberg1982logarithm} on summability of the logarithmic integral of functions with appropriate one-sided Fourier decay (see also the exposition by Volberg and Jöricke in \cite{vol1987summability}, and the introductory sections of \cite{kriete1990mean} and \cite{volberg1991criterion}). Other results of this type can be deduced from Khrushchev's profound work in \cite{khrushchev1978problem} on simultaneous polynomial approximation, which can be used to establish Thomson decompositions with applications to the theory of the Cauchy integral operator. In \cite{limani2023problem} it was found that the structure of invariant subspaces of the shift operator $M_z: f(z) \mapsto zf(z)$ acting on certain $\Po^2(\mu)$ spaces is connected with the problem of approximation by functions of various boundary regularity in de Branges-Rovnyak spaces $\hb$. In this context, an issue arises which calls for an extension of the results of Korenblum from \cite{korenblum1981cyclic} and Roberts from \cite{roberts1985cyclic} on cyclic singular inner functions in the Bergman spaces to the context of $\Po^2(\mu)$ spaces. It is these applications that are the main motivation for the research presented in this article. 

\subsection{Main research questions}

The measures $\mu$ considered here will be of the form appearing in \eqref{E:MuDef} below. The particular structure of the measure makes $\Po^t(\mu)$ exhibit properties commonly associated with three distinct spaces which have already been mentioned: the Hardy space $\Po^t(d\m)$, the Bergman space $\Po^t(dA)$, and the Lebesgue space $\mathcal{L}^t(d\m)$. The following two questions are answered in the article.

\begin{question}
    What measures $\mu$ of the form \eqref{E:MuDef} make the space $\Po^t(\mu)$ into a genuine space of analytic functions on $\D$? 
\end{question}

\begin{question}
    Given that the space $\Po^t(\mu)$ of the form \eqref{E:MuDef} really is a space of analytic function, what \textit{bounded} functions $f \in \Po^t(\mu)$ are cyclic for the shift operator $M_z: f(z) \mapsto zf(z)$? Namely, what bounded functions are such that $[f]$, the smallest closed $M_z$-invariant subspace of $\Po^t(\mu)$ which contains $f$, is the whole space itself?
\end{question}

The first question will be answered by computing the Thomson decomposition of a measure of the form \eqref{E:MuDef}. The methods which we shall use for this have already been presented in previous articles \cite{bergqvist2024distributing}, \cite{malman2023revisiting} and \cite{malman2023thomson}, but they need to be combined. It is the second question that we consider here to be the important one. Modulo standard arguments, the question can be answered by describing the cyclic \textit{singular inner functions} 
\begin{equation}
    \label{E:SnuEq}
    S_\nu(z) := \exp \Big(-\int_{\T} \frac{\zeta + z}{\zeta - z} d\nu(\zeta)\Big), \quad z \in \D. 
\end{equation}
Here $\nu$ is a finite non-negative Borel measure on $\T$, singular with respect to Lebesgue measure $d\m$. The singular inner functions are members of $\hil^\infty$, the algebra of bounded analytic functions in $\D$, and are characterized as those functions in $\hil^\infty$ which are non-vanishing in $\D$ and have non-tangential boundary values on $\T$ of unit modulus, almost everywhere with respect to the Lebesgue measure $d\m$.

The restriction in the second question to consider bounded functions is important. See Section~\ref{S:RelatedResSec} for comments on this.

\subsection{Beurling-Carleson sets associated to a space} \label{S:BCSetsIntroSec}

We will obtain complete results for measures $\mu$ of the following structure:
\begin{equation}
    \label{E:MuDef} d\mu = dA_\alpha + w \, d\m.
\end{equation} Here 
\begin{equation}
    \label{E:AalphaDef}
    dA_\alpha(z) = (1-|z|)^\alpha dA(z), \quad \alpha > -1
\end{equation} is the standard weighted area measure on the unit disk $\D$, and $w$ is a non-negative Borel measurable function on $\T$. 

Answers to the two questions will be given in terms of a class of subsets of $\T$ associated with the space. Let $h(x): (0,1) \to \R_+$ be the function 
\begin{equation}
    \label{E:CarlesonGaugeDef}
    h(x) = x \log(e/x).
\end{equation} The class of \textit{Beurling-Carleson sets} consists of those closed subsets $E \subset \T$ for which the system $\{\ell_k\}_k$ of maximal open arcs complementary to $E$ in $\T$ satisfies 
\begin{equation}
    \label{E:BChDef}
    \sum_k h(|\ell_k|) < \infty.
\end{equation} Here, and throughout the article, $|S| = m(S)$ is the Lebesgue (arclength) measure of the set $S$. We denote the class of Beurling-Carleson sets by $\BC$, and we emphasize that $\BC$ is not restricted to contain only sets of Lebesgue measure zero, as in some earlier works on cyclic vectors. Furthermore, we introduce the family of \textit{$w$-associated $\BC$ sets}, which is the subclass of $\BC$ on which $w$ is logarithmically integrable: 
\begin{equation} \label{E:AssociatedSetsDef}
    \assoc{w} = \Big\{ E : E \in \BC, \int_E \log w \, d\m > -\infty \Big\}.
\end{equation}
Note that if $E \in \BC$ is of Lebesgue measure zero, then $E \in \assoc{w}$ automatically. 

\subsection{Main results} \label{S:MainResSect}

Our first theorem answers the first of the questions stated above. It is a direct generalization to the weighted context of the main result in \cite{malman2023thomson}. The statement requires some definitions which are generalizations of those in \cite{malman2023thomson}.

Consider the non-negative quantity \[ C := \sup \Big\{|E| : E \in \assoc{w} \Big\}. \] If $C  = 0$, we set $\core{w} = \varnothing$, and otherwise we define \[ \core{w} := \bigcup_n E_n\] where $\{E_n\}_n$ is any increasing sequence of sets in $\assoc{w}$ for which $\lim_n |E_n| = C$. The existence of such a sequence is ensured by the class $\assoc{w}$ being closed under finite unions (see \cite[Proposition 3.2]{malman2023thomson}). It is not hard to see that $\core{w}$ is in this way well-defined up to a set of Lebesgue measure zero, and it does not depend on the initial choice of the sequence $\{E_n\}_n$. Clearly, sets of Lebesgue measure zero in $\assoc{w}$ play no role in shaping $\core{w}$, but they will play a role in our description of cyclic singular inner functions.

We define the residual set of $w$ as the difference of its natural carrier and the core: \[ \res{w} := \{ z \in \T : w(z) > 0 \} \setminus \core{w}.\] Throughout the article, a \textit{carrier} for a Borel measure $\sigma$ on $\T$ will be any subset $C \subset \T$ for which we have $\sigma(C \cap S) = \sigma(S)$ for all Borel subsets $S$ of $\T$. If $w \in \mathcal{L}^1(d\m)$, then we say that $C$ is a carrier for $w$ if it is a carrier for the measure $w\, d\m$.

Like $\core{w}$, the set $\res{w}$ is also defined only up to a set of Lebesgue measure zero. We decompose $w$ into pieces $w_c$, $w_r$ living on the core and residual, respectively:
\[ w_c = w|\core{w}, \quad w_r = w|\res{w}.\]

In the considered family of $\Po^t(\mu)$ spaces corresponding to measures $\mu$ of the form \eqref{E:MuDef}, the restriction $f|\D$ of any element $f \in \Po^t(\mu)$ is an analytic function living in a Bergman space $\Po^t(dA_\alpha)$. We will say that the space $\Po^t(\mu)$ of the form \eqref{E:MuDef} is \textit{irreducible} if the restriction mapping $f \mapsto f|\D$ is injective on $\Po^t(\mu)$. In such a case, each element $f \in \Po^t(\mu)$ can be uniquely identified with an analytic function on $\D$, and so $\Po^t(\mu)$ is itself a space of analytic functions on $\D$, continuously contained in the Bergman space $\Po^t(dA_\alpha)$. Although it may not be a priori obvious, our definition of irreducibility for our class of spaces coincides with the more general one in \cite{aleman2009nontangential} and \cite{thomson1991approximation} which involves non-existence of non-trivial indicator functions in the space. We will sometimes use the term \textit{analytic $\Po^t(\mu)$ space} synonymously with \textit{irreducible $\Po^t(\mu)$-space}.

\begin{mainthm}
    \thlabel{T:ThomsonDecompTheorem}
    Let $\mu$ be as in \eqref{E:MuDef}. We have the decomposition
    \[ \Po^t(\mu) = \Po^t( dA_\alpha + w_c \, d\m) \oplus \mathcal{L}^t(w_r \, d\m),\] where $\Po^t( dA_\alpha + w_c \, d\m)$ is irreducible.
\end{mainthm}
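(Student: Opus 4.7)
My strategy is to establish the direct-sum decomposition first by exhibiting the indicator $\chi_{\res{w}}$ as a polynomial limit inside $\Po^t(\mu)$, and then to derive the irreducibility of the remaining analytic summand from the defining property of $\core{w}$.

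The starting point is a purely boundary-theoretic density result: I would show that polynomials are dense in $\mathcal{L}^t(w_r\,d\m)$. By the very definition of the core as the supremum \eqref{E:AssociatedSetsDef}, any Beurling-Carleson subset of $\res{w}$ of positive Lebesgue measure must fail the logarithmic integrability condition; otherwise, one could enlarge the sequence $\{E_n\}$ and contradict the maximality of $C$. The weighted form of Khrushchev's polynomial-density theorem from \cite{khrushchev1978problem} then gives $\Po^t(w_r\,d\m)=\mathcal{L}^t(w_r\,d\m)$. The next step is to upgrade this to a simultaneous approximation statement: produce polynomials $p_n$ converging to $\chi_{\res{w}}$ in $\mathcal{L}^t(w_r\,d\m)$ while going to $0$ in $\mathcal{L}^t(dA_\alpha+w_c\,d\m)$. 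Following the schemes of \cite{malman2023thomson} and \cite{bergqvist2024distributing}, I would, for each $E_n$ in the increasing sequence defining $\core{w}$, build a bounded outer function $G_n$ smooth up to $\T$ with $|G_n|=1$ on $E_n$ and $|G_n|$ decaying rapidly off a thin neighborhood; such $G_n$ exist because $E_n \in \BC$, by the Carleson condition \eqref{E:BChDef}. Multiplying an $\mathcal{L}^t(w_r\,d\m)$-approximant of $\chi_{\res w}$ by $(1-G_n^{N_n})$ for a suitable $N_n\to\infty$ then yields the desired triple-balanced approximation on $\D$, $\core w$, and $\res w$. Once $\chi_{\res w}\in\Po^t(\mu)$, writing each $f\in\Po^t(\mu)$ as $\chi_{\res w}f+(1-\chi_{\res w})f$ and identifying the first piece with $\mathcal{L}^t(w_r\,d\m)$ and the second with the polynomial closure in $\mathcal{L}^t(dA_\alpha+w_c\,d\m)$ yields the direct-sum decomposition.

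For irreducibility of $\Po^t(dA_\alpha+w_c\,d\m)$ I would, following the Thomson-style criterion alluded to in the excerpt, rule out non-trivial indicator functions, equivalently show injectivity of the restriction $f\mapsto f\big|_\D$. Suppose that $f\in\Po^t(dA_\alpha+w_c\,d\m)$ vanishes on $\D$ but has non-zero boundary trace on some $E_n\subset\core w$. Since $\log w_c$ is integrable on the Beurling-Carleson set $E_n$, a Khrushchev-type inner-outer factorization applied to the approximating polynomials, whose restrictions to $\D$ tend to $0$ in the Bergman norm $\mathcal{L}^t(dA_\alpha)$, forces the boundary trace to vanish on $E_n$ by standard Carleman-type uniqueness. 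Exhausting the core by $\{E_n\}$ then gives $f=0$ on $\core w$, a contradiction.

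The main obstacle is the three-way simultaneous approximation described above. The construction must drive the polynomials towards $\chi_{\res w}$ on the residual while simultaneously crushing them to $0$ both on $\D$ (in the weighted area norm) and on $\core w$ (against a weight that is merely logarithmically integrable on each $E_n$). The Beurling-Carleson condition supplies exactly the boundary regularity of $G_n$ needed to transfer smallness from $\T\setminus E_n$ into smallness on $\D$, and the logarithmic integrability of $w_c$ on each $E_n$ prevents over-damping of the traces on the core. Calibrating these two features against each other is where the constrained $\mathcal{L}^1$-optimization tools mentioned in the abstract are expected to do the decisive quantitative work.
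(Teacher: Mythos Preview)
Your irreducibility sketch is on target and matches the paper's route via Khrushchev's uniqueness theorem applied on each $E_n\in\assoc{w}$. The serious gap is in the direct-sum part.

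Your multiplier scheme cannot produce the required three-way approximation. Start with the fact that a polynomial approximant $q_n$ of $\chi_{\res w}$ in $\mathcal{L}^t(w_r\,d\m)$ carries no bound whatsoever on $\D$ or on $\core w$; Khrushchev-type density on the residual tells you nothing about the size of $q_n$ elsewhere. Now examine your damper: an outer $G_n$ with $|G_n|=1$ on $E_n$ and $|G_n|$ small off a thin neighborhood has $\log|G_n(0)|=\int_\T\log|G_n|\,d\m<0$, so $|G_n|<1$ throughout $\D$, hence $G_n^{N_n}\to 0$ and $(1-G_n^{N_n})\to 1$ pointwise in $\D$. Multiplying $q_n$ by something tending to $1$ in $\D$ does nothing to crush $q_n$ there. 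On $E_n$ the factor $(1-G_n^{N_n})$ merely oscillates with modulus between $0$ and $2$, so you get no damping on the core either. In short, the construction is aimed at the wrong set: you are building multipliers adapted to the \emph{core}, but the decisive quantitative obstruction lives on the \emph{residual}.

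The paper proceeds in the opposite direction: it builds $F\in\Po^t(\mu)$ with $F|_\D\equiv 1$ and $F|_{\res w}\equiv 0$, so that $1-F$ plays the role of $\chi_{\res w}$. The functions $F_N$ are outer, $F_N=\exp\big(\int_\T\frac{\zeta+z}{\zeta-z}f_N\,d\m\big)$, and the real boundary data $f_N$ are engineered via the constrained $\mathcal{L}^1$-optimization: on each short arc $I_k$ meeting $\res w$ one places positive mass bounded pointwise by $\log^+(1/w)$ and satisfying $\int_\ell\,\cdot\le h(|\ell|)$ on every subarc, balanced by large negative constants on $I_k\cap\res w$. The point is the Residual Lower Bound $\mathcal{M}(h,R|I)\ge h(|I\cap\res w|)$ together with the Hausdorff-functional duality, which guarantees that enough admissible positive mass is available to make the negative part diverge to $-\infty$ on $\res w$. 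The arc constraint yields the Poisson estimate $|F_N(z)|\le\exp\big(\epsilon\,h(1-|z|)/(1-|z|)\big)$ controlling the Bergman norm, while $f_N\le\epsilon\log^+(1/w)$ controls the $\mathcal{L}^t(w\,d\m)$ norm. None of these ingredients is visible in your scheme, and the Residual Lower Bound is precisely the ``decisive quantitative work'' you allude to but do not deploy.
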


More precisely, the decomposition is to be interpreted in the following sense: any $f$ in one of the summands on the right-hand side has an extension to an element $\mathcal{L}^t(\mu)$ by setting $f$ to zero outside of its initial carrier set, and these extended elements are in fact members of $\Po^t(\mu)$ and they span the whole space. The theorem is a refined version of a result from \cite{bergqvist2024distributing}, which established the decomposition in an important special case $w = w_r$, and which confirmed an old conjecture of Kriete and MacCluer from \cite{kriete1990mean}. The irreducibility of the piece $\Po^t( dA_\alpha + w_c \, d\m)$ has essentially been known since the 1970s as a consequence of the work of Khrushchev in \cite{khrushchev1978problem}, although his results are not coined in terms of $\Po^t(\mu)$ spaces. We will fill in the details of the necessary modifications of his proof in Section \ref{S:ThomsonDecompProofSec}. A different proof of irreducibility for $t=2$ is given in \cite[Proposition 5.1]{limani2024constructions}. Our principal contribution in this direction establishes that $\Po^t( dA_\alpha + w_c \, d\m)$ is the maximal irreducible piece. We deduce from \thref{T:ThomsonDecompTheorem} that our $\Po^t(\mu)$ is a space of analytic functions on $\D$ if and only if $\core{w}$ is a carrier set for $w$.

Having settled the analyticity question, we may get to the main matter. Here is our cyclicity result.

\begin{mainthm}
    \thlabel{T:CyclicityMainTheorem}
    Let $\mu$ be as in \eqref{E:MuDef}, and assume that $\Po^t(\mu)$ is a space of analytic functions on $\D$. The singular inner function $S_\nu$ is cyclic in $\Po^t(\mu)$ if and only if $\nu$ vanishes on all associated sets:  
    \[\nu(E) = 0, \quad E \in \assoc{w}.\]
\end{mainthm}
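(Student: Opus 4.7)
The plan is to establish the two directions separately. Throughout, Theorem~A combined with the analyticity hypothesis forces $w = w_c$, so that $w$ is carried by $\core{w}$ and $\Po^t(\mu) = \Po^t(dA_\alpha + w \, d\m)$ is the irreducible piece.

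\emph{Necessity.} Suppose $\nu(E) > 0$ for some $E \in \assoc{w}$. The goal is to exhibit a non-zero continuous linear functional $\Lambda$ on $\Po^t(\mu)$ that annihilates $S_\nu \cdot \Po$ while $\Lambda(1) \neq 0$. Because $E \in \BC$ and $\int_E \log w \, d\m > -\infty$, one builds a bounded outer function $O$ on $\D$ whose boundary modulus dominates $w^{1/s}$ on $E$ (with $s$ the dual exponent for the given $t$), yet vanishes to infinite order as one approaches the endpoints of every complementary arc $\ell_k$ of $E$; the Carleson summability $\sum_k h(|\ell_k|) < \infty$ is precisely what permits such a \emph{cuspidal} outer function. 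One then assembles $\Lambda$ from boundary integration of $f\,\bar{O}\,\bar{S_\nu}$ against a measure absolutely continuous with respect to $\nu|_E$, with a correction term coming from the $dA_\alpha$ part of $\mu$ handled by Cauchy-duality considerations. The annihilation of $S_\nu \cdot \Po$ reduces to the standard orthogonality of $\bar{O}$ with polynomials on $\T$ in an $H^2$-type pairing, while the non-triviality $\Lambda(1) \neq 0$ follows from the fact that $1/S_\nu$ has genuine defect on $E$, as witnessed by $\nu(E) > 0$. This parallels Korenblum's duality argument, adapted to the hybrid measure.

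\emph{Sufficiency.} Assume $\nu(E) = 0$ for every $E \in \assoc{w}$. It suffices to prove that $1 \in [S_\nu]$, i.e., that the constant function can be approximated in the $\Po^t(\mu)$ norm by elements of $S_\nu \cdot \Po$. Following Roberts, one decomposes $\nu = \sum_{k=1}^\infty \nu_k$ into small positive singular summands so fine that each $S_{\nu_k}$ admits a good approximate inverse; the decomposition is arranged so that each $\nu_k$ is concentrated on a closed set outside the family $\assoc{w}$, which by hypothesis $\nu$ does not charge, so no mass is lost in the refinement. For each $\nu_k$ one then constructs a polynomial $p_k$ with $\|p_k S_{\nu_k} - 1\|_{\Po^t(\mu)} < \varepsilon_k$, obtained by truncating an outer function whose boundary modulus is essentially $1/|S_{\nu_k}|$ on $\T$. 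Telescoping finite products of such $p_k$ produces a sequence of elements in $S_\nu \cdot \Po$ converging to $1$, establishing cyclicity.

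\emph{Main obstacle.} The central difficulty — and the genuinely new phenomenon relative to the classical Bergman case — is the \emph{simultaneity} of the approximation in the hybrid area-plus-boundary norm. In the pure Bergman setting one must only control the $dA_\alpha$ integral, and in the Hardy setting only the boundary integral; here, an outer function engineered to decay rapidly inside $\D$ need not be controlled in $\mathcal{L}^t(w\, d\m)$, and vice versa. Reconciling the two demands is the heart of the argument, and it is here that the assumption $E \in \assoc{w}$ — rather than merely $E \in \BC$ — enters decisively: the log-integrability $\int_E \log w \, d\m > -\infty$ is precisely what provides enough room in the outer-function construction to prescribe moduli that simultaneously tame $w$ on $\T$ and produce adequate decay inside $\D$. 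Implementing this hinges on the Khrushchev-type simultaneous approximation machinery combined with the constrained $\mathcal{L}^1$-optimization tools referenced in the abstract, which quantify the errors in the Roberts-style decomposition when its pieces are aligned with the associated Beurling-Carleson sets.
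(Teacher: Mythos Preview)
Your proposal is a plan rather than a proof, and in both directions the plan has substantive gaps that are not mere details.

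\textbf{Necessity.} The paper does not construct an annihilating functional. It proves instead a \emph{permanence} property (Proposition~4.1): if $\nu$ is supported on $E \in \assoc{w}$, then every $g \in [S_\nu] \cap \hil^\infty$ satisfies $g/S_\nu \in \hil^\infty$, so in particular $1 \notin [S_\nu]$. The argument passes to a smooth subdomain $\Omega \subset \D$ with $\partial\Omega \cap \T = E$ via a conformal map $\phi$, shows (following Berman--Brown--Cohn) that $S_\nu \circ \phi$ has a nontrivial singular inner factor $\theta$, and then uses Hardy-space permanence after multiplying by two auxiliary outer functions: one built from $\log \dist{\cdot}{E}$ to tame the Bergman growth on $\partial\Omega \cap \D$, and one built from $\log w|_E$ (this is where $E \in \assoc{w}$, not merely $E \in \BC$, is used) to tame the boundary piece on $E$. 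Your functional $\Lambda$, by contrast, is never actually defined: ``boundary integration of $f\,\bar O\,\bar{S_\nu}$ against a measure absolutely continuous with respect to $\nu|_E$, with a correction term'' does not specify a bounded functional on $\Po^t(\mu)$, and the claimed reduction to ``standard orthogonality of $\bar O$ with polynomials'' is not justified.

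\textbf{Sufficiency.} The paper uses Korenblum's linear programming, not Roberts' decomposition. The key new idea is to replace the quantity $h(|I|)$ in Korenblum's original program by the Hausdorff functional $\mathcal{M}(A\cdot h, R|I)$ with $R = \log^+(1/w)$; its duality (Proposition~2.1) is exactly the mechanism that couples the boundary weight $w$ to the interior growth. Consistency of the program yields premeasures $f_\epsilon$ and outer functions $F_\epsilon$ with $S_\nu F_\epsilon \to 1$, absorbed into $[S_\nu]$ in finitely many multiplicative stages; inconsistency, via linear-programming duality and a compactness lemma for $\BC_h$ sets, produces $E \in \assoc{w}$ with $\nu(E) > 0$. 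Your Roberts-style sketch contains a logical slip --- ``each $\nu_k$ is concentrated on a closed set outside the family $\assoc{w}$, which by hypothesis $\nu$ does not charge'' inverts the hypothesis, which is that $\nu$ vanishes on sets \emph{in} $\assoc{w}$ --- and, more seriously, never says how the polynomials $p_k$ are to be built so that $\|p_k S_{\nu_k} - 1\|_{\mu,t}$ is small simultaneously for the $dA_\alpha$ and the $w\,d\m$ pieces. You correctly flag this as the main obstacle in your closing paragraph, but naming the obstacle and invoking ``Khrushchev-type machinery combined with constrained $\mathcal{L}^1$-optimization tools'' is not the same as overcoming it; the paper devotes all of Section~5 to precisely this, and the mechanism it uses (Korenblum's program with the Hausdorff functional inserted) is not the one you outline.
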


It is the sufficiency for cyclicity of the vanishing condition on the associated sets that should be seen as the main new contribution of this article. The necessity of the condition is easier to establish than the sufficiency, and one can say a bit more. In \cite[Corollary 6.5]{limani2024constructions} it has been proved that if $\nu$ is supported on a set in $\assoc{w}$ and $[S_\nu]$ is the $M_z$-invariant subspace generated by $S_\nu$ in $\Po^2(\mu)$, then we have $f/S_\nu \in \hil^\infty$ for any $f \in [S_\nu] \cap \hil^\infty$. In other words, $S_\nu$ appears as a factor in the usual inner-outer factorization of any non-zero $f \in [S_\nu] \cap \hil^\infty$. This property of course implies that $1 \not\in [S_\nu]$, so such $S_\nu$ is not cyclic. In Section \ref{S:PermanenceSubsec} we use methods from \cite{berman1984cyclic} to extend this result to $t \neq 2$. 

Using the result indicated in the above paragraph, we may describe the $M_z$-invariant subspace $[f]$ of $\Po^t(\mu)$ generated by a bounded function $f$ in the following way. Let $f = BS_\nu U$ be the inner-outer factorization of $f$ into a Blaschke product $B$, singular inner function $S_\nu$, and a bounded outer function $U$. Let $\nu_p$ be the least upper bound of all restrictions $\nu|E$ of $\nu$ to sets $E \in \assoc{w}$, and write 
\begin{equation}
    \label{E:NuDecomp}
    \nu = \nu_p + \nu_c.
\end{equation} 

One way to construct $\nu_p$ more explicitly is to employ an argument similar to the one used in the construction of the set $\core{w}$. Namely, take an increasing sequence $\{E_n\}_n$ of sets in $\assoc{w}$ which satisfies \[ \lim_{n \to \infty} \nu(E_n) = \sup_{E \in \assoc{w}} \nu(E) \] and define $\nu_p$ as the restriction of $\nu$ to the Borel set $\bigcup_n E_n$. Clearly $\nu_p$ constructed in this way satisfies the measure inequality $\nu|E \leq \nu_p$ for any $E \in \assoc{w}$. We note that $\nu_p$ has as a carrier a countable union of sets in $\assoc{w}$, and $\nu_c(E) = 0$ for $E \in \assoc{w}$. From \thref{T:CyclicityMainTheorem} we can deduce the following consequence. 

\begin{cor*}
    Let $f = BS_\nu U$ be the inner-outer factorization of a function $f \in \hil^\infty$. Then \[[f] = [BS_{\nu_p}],\] and every function $h \in [f] \cap \hil^\infty$ satisfies $h/BS_{\nu_p} \in \hil^\infty$.
\end{cor*}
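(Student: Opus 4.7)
The plan is to split $f = (BS_{\nu_p}) \cdot (S_{\nu_c} U)$ into two $\hil^\infty$ factors and to prove the two inclusions defining $[f] = [BS_{\nu_p}]$ separately, using \thref{T:CyclicityMainTheorem} for the $S_{\nu_c}$ factor and a classical outer-function truncation for $U$. A key preliminary is that $\hil^\infty \subset \Po^t(\mu)$: dilating any $g \in \hil^\infty$ to $g_r(z) = g(rz)$, dominated convergence on $dA_\alpha$ and on the finite measure $w\,d\m$ gives $g_r \to g$ in $L^t(\mu)$ as $r \to 1^-$, and each $g_r$ is uniformly approximated on $\overline{\D}$ by its Taylor sections. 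Together with the observation that multiplication by any $\hil^\infty$ element is a contraction on $L^t(\mu)$, this preliminary lets one freely propagate polynomial approximations through bounded analytic factors.

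For $[f] \subseteq [BS_{\nu_p}]$, I would approximate $S_{\nu_c} U \in \Po^t(\mu)$ by polynomials $p_n$ and multiply through by $BS_{\nu_p}$ to conclude $f \in [BS_{\nu_p}]$. For the reverse inclusion, the goal is to produce polynomials $p_n$ with $p_n\, S_{\nu_c} U \to 1$ in $L^t(\mu)$, since then $p_n f \to BS_{\nu_p}$. \thref{T:CyclicityMainTheorem} applied to $\nu_c$, which by construction of $\nu_p$ vanishes on every set of $\assoc{w}$, supplies polynomials $r_m$ with $r_m S_{\nu_c} \to 1$. To dispose of the outer factor I would introduce the outer functions $U_n$ with boundary modulus $\min(n, 1/|U|)$ on $\T$; the products $UU_n$ are outer $\hil^\infty$ contractions satisfying $\|UU_n\|_{L^2(\T)}^2 \to 2\pi$ and $(UU_n)(0) \to 1$, which forces $UU_n \to 1$ in $L^2(d\m)$, and after extracting a subsequence convergent a.e.\ on $\T$ and invoking dominated convergence on $dA_\alpha$ and on the finite measure $w\,d\m$, also in $L^t(\mu)$. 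Approximating each $U_n \in \hil^\infty$ by polynomials and diagonalizing against the $r_m$'s then produces the desired sequence $p_n$.

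For the permanence conclusion, continuity of point evaluations and their derivatives on $\Po^t(\mu)$, inherited from the continuous embedding into $\Po^t(dA_\alpha)$, forces every $h \in [BS_{\nu_p}] \cap \hil^\infty$ to vanish at each zero of $B$ with at least the correct multiplicity, hence to have $B$ as an inner divisor. For the singular part, exhaust the carrier of $\nu_p$ by an increasing family $\{E_n\} \subset \assoc{w}$ and set $\nu_{p,n} = \nu_p|E_n$. Factoring $BS_{\nu_p} = S_{\nu_{p,n}} \cdot (BS_{\nu_p - \nu_{p,n}})$ and approximating the bounded second factor by polynomials puts $BS_{\nu_p}$ into $[S_{\nu_{p,n}}]$, so that $h \in [S_{\nu_{p,n}}] \cap \hil^\infty$. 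The permanence result for singular measures carried by a single associated set (extended to $t \neq 2$ in Section~\ref{S:PermanenceSubsec}) then yields $h/S_{\nu_{p,n}} \in \hil^\infty$ for each $n$, meaning that the singular measure of $h$ majorizes $\nu_{p,n}$; taking the supremum gives $h/S_{\nu_p} \in \hil^\infty$, and combining with the Blaschke divisibility finishes the argument. The main technical hurdle is the cyclicity of the product $S_{\nu_c} U$: \thref{T:CyclicityMainTheorem} covers only the singular inner part, and the outer truncation argument for $U$ has to be supplied by hand.
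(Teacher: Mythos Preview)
Your proposal is correct and follows essentially the same route as the paper's proof: both split $f = (BS_{\nu_p})(S_{\nu_c}U)$, establish the two inclusions by showing $S_{\nu_c}U$ is cyclic via the product-of-bounded-cyclics argument, and prove the divisibility statement by exhausting the carrier of $\nu_p$ with an increasing sequence $\{E_n\}_n \subset \assoc{w}$ and invoking \thref{P:PermanenceInnerAssocSet} on each $\nu_p|E_n$. The paper simply asserts that bounded outer functions are cyclic and that $h/B \in \hil^\infty$ ``certainly'' holds, whereas you supply explicit arguments for both (the outer truncation $U_n$ with boundary modulus $\min(n,1/|U|)$, and continuity of point evaluations for the Blaschke divisibility); your limiting step for the singular factor is phrased via majorization of singular measures rather than the paper's norm identity $\|h/S_{\nu_n}\|_\infty = \|h\|_\infty$ combined with pointwise convergence $S_{\nu_n} \to S_{\nu_p}$, but the two are equivalent.
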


We give a proof of the corollary at the end of Section~\ref{S:CyclicitySection}. \thref{T:CyclicityMainTheorem} and the corollary answer the second of our questions stated above. 

We remark that there is a huge difference between the trivial case $\T \in \assoc{w}$, which implies that $|\T \setminus \core{w}| = 0$, and this latter condition $|\T \setminus \core{w}| = 0$. In the first case we have $\log w \in \mathcal{L}^1(d\m)$, and the structure of $\Po^t(\mu)$ is simple. It is then a space of analytic functions contractively contained in a weighted Hardy space $W \cdot \hil^t$, where $W$ is an outer function satisfying $|W| = w^{-1/2}$ on $\T$, and where the norm of $W \cdot f$ in $W \cdot \hil^t$ is the norm of $f$ in $\hil^t$. In particular, no singular inner functions are then cyclic in $\Po^t(\mu)$. However, in the second case, such an identification does not in general hold. Although we shall not go into details of a construction, we wish to mention that it is possible to exhibit examples of singular inner functions $S_\nu$ which are cyclic in a space $\Po^t(\mu)$ constructed from $\mu$ of the kind \eqref{E:MuDef} for which $|\T \setminus \core{w}| = 0$.

\subsection{Related results and comments}

\label{S:RelatedResSec}

\subsubsection{Korenblum-Roberts Theorem}

A singular inner function is not cyclic in the Hardy spaces $\hil^t = \Po^t(d\m)$. The situation is different in the context of Bergman spaces $\Po^t(dA_\alpha)$, and was explained in the 1980s in the works of Korenblum in \cite{korenblum1981cyclic} and Roberts in \cite{roberts1985cyclic}. They gave two very different proofs of the following statement, a precursor of our main result in \thref{T:CyclicityMainTheorem}.

\begin{thm*}[\textbf{Korenblum-Roberts cyclicity theorem}] The singular inner function $S_\nu$ is cyclic in the standard weighted Bergman spaces $\Po^t(dA_\alpha)$, $t\in (0,\infty)$, if and only if $\nu$ vanishes on all Beurling-Carleson sets of Lebesgue measure zero: 
\[ \nu(E) = 0, \quad E \in \BC, \, |E| = 0.\]
\end{thm*}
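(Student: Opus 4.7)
The plan is to prove the two directions separately, with sufficiency being the main challenge.

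For necessity, I would establish a permanence-of-singular-factor principle: if $\nu|E > 0$ for some $E \in \BC$ with $|E|=0$, then every $f \in [S_\nu] \cap \hil^\infty$ carries $S_{\nu|E}$ as an inner factor, and in particular $1 \notin [S_\nu]$. The classical implementation uses a Taylor--Williams outer function $U \in \hil^\infty$ whose modulus vanishes super-polynomially along non-tangential approach to $E$; the existence of such $U$ requires exactly the Beurling--Carleson condition together with $|E| = 0$. Testing against polynomial multiples of $S_\nu$ and invoking a Nevanlinna-type factorization argument then shows that the singular factor survives under closure in $\mathcal{L}^t(dA_\alpha)$.

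For sufficiency, I would follow Roberts' constructive approach, seeking polynomials $p_n$ with $\|p_n S_\nu - 1\|_{\mathcal{L}^t(dA_\alpha)} \to 0$. The first step is a partition of $\nu$: for each $n$, write $\nu = \sum_{j=1}^{K_n} \nu_j^{(n)}$ where each piece has small total mass $\leq \epsilon_n$ and support in a short arc $I_j^{(n)} \subset \T$. The hypothesis on $\nu$ enters quantitatively through a Korenblum-type linear-programming argument: the arcs cannot be chosen arbitrarily short, as that would force mass onto a Beurling--Carleson set of Lebesgue measure zero. For each piece I would approximate the reciprocal $S_{\nu_j^{(n)}}^{-1}$ by an explicit polynomial $q_j$, using that $\exp(c(\zeta+z)/(\zeta-z))$ is well-approximated on compacta of $\D \setminus \{\zeta\}$ by polynomials of degree on the order of $c/|I_j^{(n)}|$; the candidate approximant is then $p_n := \prod_j q_j$.

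The main obstacle is controlling the error via the multiplicative telescoping identity $p_n S_\nu - 1 = \sum_j \bigl(\prod_{i<j} q_i S_{\nu_i^{(n)}}\bigr)\bigl(q_j S_{\nu_j^{(n)}} - 1\bigr)$: each factor $q_j S_{\nu_j^{(n)}}$ is close to $1$ away from its home arc $I_j^{(n)}$ but behaves poorly nearby, and the $\mathcal{L}^t(dA_\alpha)$-mass of the union of these bad neighbourhoods must tend to zero. This is exactly where the Carleson gauge $h(x) = x\log(e/x)$ enters: the admissible degrees $\deg q_j$ and admissible masses $\nu_j^{(n)}(\T)$ must be balanced so that the bad-set contribution scales like $\sum_j \nu_j^{(n)}(\T)\, h(|I_j^{(n)}|)$, and the hypothesis that $\nu$ charges no Beurling--Carleson set of Lebesgue measure zero is precisely what allows such a partition to be found for which this sum is driven to zero.
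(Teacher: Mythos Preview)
Your outline is a reasonable route to the classical theorem but differs from the paper's treatment on both halves. The paper does not prove Korenblum--Roberts directly; it observes that with $w\equiv 0$ the family $\assoc{w}$ reduces to the $\BC$-sets of Lebesgue measure zero, so the statement is the $w\equiv 0$ special case of \thref{T:CyclicityMainTheorem}, and it explicitly remarks that its argument ``is not a new proof of the classical result.'' For sufficiency, the paper's proof of \thref{T:CyclicityMainTheorem} follows Korenblum's linear-programming route (Section~\ref{S:CyclicitySection}): consistency of the program \eqref{E:LinearProgram} produces premeasures $f_\epsilon$ and outer-type functions $F_\epsilon$ with $S_\nu F_\epsilon$ absorbed into $[S_\nu]$ and converging weakly to $1$, while inconsistency yields, via the duality of \thref{L:DualityLemma1NecProof} and a $\BCh$-compactness argument (\thref{L:FinalLemmaNecessity}), a set $E$ with $\nu(E)>0$. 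Your partition-and-telescoping scheme with explicit polynomial approximants to the pieces $S_{\nu_j}^{-1}$ is Roberts' alternative proof, equally valid for the classical statement; but your injection of a ``Korenblum-type linear-programming argument'' into the partition step conflates the two methods---Roberts' construction runs on direct covering and growth estimates, not on a dual program, and the quantity you write down for the bad-set contribution is not the one that actually governs his iteration. For necessity, the paper (Section~\ref{S:PermanenceSubsec}) uses the Berman--Brown--Cohn subdomain technique---a conformal map $\phi:\D\to\Omega$ onto a region with $\partial\Omega\cap\T=E$, plus the fact that $S_\nu\circ\phi$ retains a nontrivial singular inner factor---rather than a Taylor--Williams outer function as you propose. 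Both devices work for $|E|=0$, but the paper's is chosen because it extends to associated sets of positive Lebesgue measure, which the general \thref{T:CyclicityMainTheorem} requires and the Taylor--Williams construction does not immediately accommodate.
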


Korenblum derives the result from his famous works \cite{korenblum1975extension} and \cite{korenblum1977beurling} on cyclic vector in the so-called growth classes. The proof of Roberts is completely different and of importance in its own right. Note that if we set $w \equiv 0$ on $\T$, then $\assoc{w}$ coincides with the family of $\BC$ sets of Lebesgue measure zero. Consequently, the Korenblum-Roberts theorem is a special case of \thref{T:CyclicityMainTheorem}, but the argument we present below is not a new proof of the classical result. To establish \thref{T:CyclicityMainTheorem}, we will use extensions of linear programming techniques of Korenblum from \cite{korenblum1975extension}, \cite{korenblum1977beurling} augumented with recently obtained specialized constrained $\mathcal{L}^1$-optimization tools from \cite{bergqvist2024distributing} which we describe in Section \ref{S:HausdorffFuncSec}. Techniques of Roberts, on the other hand, have recently found interesting applications, for instance in the works of Ivrii in \cite{ivrii2019} and Ivrii and Nicolau in \cite{ivrii2022beurling}. 

\subsubsection{Faster decreasing weights on $\D$}

Several of our results have versions for measures $\mu$ of structure similar to \eqref{E:MuDef} but where $dA_\alpha$ is replaced by a radial measure $G(|z|)dA(z)$ with $G(x) \to 0$ as $x \to 1$ sufficiently slowly, see \eqref{E:MuGeneralizedDef} below. These results have essentially the same proofs as our main theorems, and in the coming sections we shall carry out our arguments in the general context and state the generalized results. Our method does not apply to weights of the form $G(x) = \exp\big(-c/(1-x)\big)$ for $c > 0$. The structure of the space $\Po^2(\mu)$ of this form has been studied in \cite{malman2023revisiting} and \cite{malman2023shift}, and is analogous to the one presented here in \thref{T:ThomsonDecompTheorem} and \thref{T:CyclicityMainTheorem}, but where the associated sets $\assoc{w}$ replaced by the family of intervals on which $w$ has an integrable logarithm. The new difficulties in the setting considered here are related to the complexity of the sets in $\assoc{w}$ in comparison to intervals. In particular, in \cite{malman2023revisiting} and \cite{malman2023shift} no use of linear optimization theory was necessary.

\subsubsection{The work of Aleman, Richter and Sundberg}
Work on analytic $\Po^t(\mu)$ spaces from a different viewpoint than ours has been carried out by Aleman, Richter and Sundberg. Their article \cite{aleman2009nontangential} is concerned with function and operator theory in $\Po^t(\mu)$ spaces for a general measure $\mu$ supported in the closed unit disk $\cD = \D \cup \T$. In the article, the authors work from the get go under the assumption that $\Po^t(\mu)$ is a space of analytic functions on $\D$, without any additional assumptions on the structure of the measure $\mu$. Using in part methods of Thomson from \cite{thomson1991approximation}, they obtain the important existence result for non-tangential boundary values for functions in the space, and as a consequence obtain information regarding zero sets and interpolating sequences. They prove also the important index theorem for $M_z$-invariant subspaces: given any $M_z$-invariant subspace $\mathcal{N} \subset \Po^t(\mu)$, the orthogonal complement of $M_z\mathcal{N}$ inside $\mathcal{N}$ has dimension $1$. 


\subsubsection{Unbounded cyclic functions}
We stress that the restriction to consider bounded functions $f \in \hil^\infty$ in our cyclicity problem is motivated by the applications, which we shall soon present, but it is also critical. Our main result in \thref{T:CyclicityMainTheorem} and the corollary above has a straight-forward extension to cyclic functions $f$ in the Nevanlinna class, i.e, quotients $f = d/c$ of bounded analytic functions $d$, $c$ in $\D$ (see the end of Section~\ref{S:CyclicitySection} for the precise statement and a short argument), but the problem of characterizing the cyclicity for a more general function $f \in \Po^t(\mu)$, even in the case $d\mu = dA$, is a well-known open problem in the theory of Bergman spaces. In that context, a sufficient condition and a necessary condition, which are not too far apart, follow from Korenblum's works \cite{korenblum1975extension}, \cite{korenblum1977beurling} on the \textit{extended Nevanlinna class}. Widening of our results to the setting of Korenblum's works requires some further efforts which are not part of this article.

\subsection{An application to $\hb$ spaces}

Our corollary to \thref{T:CyclicityMainTheorem} has a direct application to the theory of de Branges-Rovnyak spaces $\hb$, a family of Hilbert spaces of analytic functions in $\D$ associated to analytic self-maps $b$ of $\D$. We refer the reader to \cite{hbspaces1fricainmashreghi}, \cite{hbspaces2fricainmashreghi} and \cite{sarasonbook} for the theory of these spaces. With aid of the results in \cite{limani2023problem} and \cite{limani2024constructions}, results of the present work enable us to complete the characterization of the class of symbols $b$ corresponding to spaces $\hb$ admitting a norm dense subset of functions in $\A^\infty$, the algebra of analytic functions in $\D$ with $C^\infty$ extensions to the boundary $\T$.

It was found in \cite[Theorem 1.1]{limani2023problem} that two necessary conditions for the density of $\A^\infty \cap \hb$ in $\hb$ can be expressed in terms of the structure of $M_z$-invariant subspaces of $\Po^2(\mu)$, where $\mu$ has the form \eqref{E:MuDef}, and
\begin{equation}
\label{E:DeltaWeightDef}
w = \Delta := \sqrt{1-|b|^2}.
\end{equation} The parameter $\alpha$ in the definition of $dA_\alpha$ is unimportant. If $b = BS_\nu U$ is the inner-outer factorization of $b$ into a Blaschke product $B$, singular inner functions $S_\nu$ and outer function $U$, then the two necessary conditions, expressed in the language of this article, are that
\begin{enumerate}[(i)]
    \item $\Po^2(\mu)$ should be a space of analytic functions,
    \item any function $h \in [S_\nu] \cap \hil^\infty$ should satisfy $h/S_\nu \in \hil^\infty$.
\end{enumerate}

The results of this article imply that (i) is satisfied if and only if the set $\core{\Delta}$ is a carrier for $\Delta$, while (ii) is satisfied if and only if $\nu = \nu_p$ in \eqref{E:NuDecomp}. Moreover, in \cite[Theorem B]{limani2024constructions} it is shown that these last two conditions are sufficient for density of $\A^\infty \cap \hb$ in $\hb$. Combining these results, we obtain a complete and explicit characterization.

\begin{mainthm} \thlabel{T:HbDensityTheorem}
    Let $b = BS_\nu U$ be the inner-outer factorization of $b$, and let $\Delta$ be given by \eqref{E:DeltaWeightDef}. The set $\A^\infty \cap \hb$ is norm dense in the space $\hb$ if and only if the following two conditions are satisfied:
    \begin{enumerate}[(i)]
        \item the set $\core{\Delta}$ is a carrier for $\Delta$, 
        \item we have $\nu = \nu_p$ in \eqref{E:NuDecomp}.
    \end{enumerate}
\end{mainthm}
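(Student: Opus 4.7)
My plan is to deduce \thref{T:HbDensityTheorem} by combining the reductions already carried out in \cite{limani2023problem} and \cite{limani2024constructions} with the two principal theorems of this paper. The substance has been previewed in the paragraph preceding the theorem's statement; only a short verification remains.

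From \cite[Theorem 1.1]{limani2023problem} one obtains the necessity, for density of $\A^\infty \cap \hb$ in $\hb$, of two auxiliary conditions: (A) that $\Po^2(\mu)$ is a space of analytic functions on $\D$, and (B) that every $h \in [S_\nu] \cap \hil^\infty$ satisfies $h/S_\nu \in \hil^\infty$. Here $\mu$ is the measure of \eqref{E:MuDef} associated to $w = \Delta$ as in \eqref{E:DeltaWeightDef}. The reverse direction is supplied by \cite[Theorem B]{limani2024constructions}: the conjunction of (A) and (B) is already sufficient for density. Thus it is enough to establish the equivalences (A) $\Leftrightarrow$ (i) and (B) $\Leftrightarrow$ (ii).

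The equivalence (A) $\Leftrightarrow$ (i) is read off \thref{T:ThomsonDecompTheorem} with $w = \Delta$. That theorem produces the orthogonal decomposition $\Po^2(\mu) = \Po^2(dA_\alpha + \Delta_c\, d\m) \oplus \mathcal{L}^2(\Delta_r\, d\m)$, in which the first summand is always irreducible. Hence $\Po^2(\mu)$ is a space of analytic functions precisely when the Lebesgue summand vanishes; and since $\Delta > 0$ on $\res{\Delta}$ by construction, this happens exactly when $\res{\Delta}$ has Lebesgue measure zero, equivalently when $\core{\Delta}$ is a carrier for $\Delta$. This is condition (i).

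The equivalence (B) $\Leftrightarrow$ (ii) follows from the corollary to \thref{T:CyclicityMainTheorem}, applied with $f = S_\nu$ (so that $B = U = 1$). The corollary yields $[S_\nu] = [S_{\nu_p}]$ together with the divisibility property $h/S_{\nu_p} \in \hil^\infty$ for every $h \in [S_\nu] \cap \hil^\infty$. If $\nu = \nu_p$, this at once gives (B). Conversely, assume (B); the test element $h = S_{\nu_p}$ lies in $[S_{\nu_p}] = [S_\nu]$ and in $\hil^\infty$, so (B) forces $S_{\nu_p}/S_\nu \in \hil^\infty$. Writing $\nu = \nu_p + \nu_c$ with $\nu_c \geq 0$, this quotient equals $1/S_{\nu_c}$; since the reciprocal of a non-constant singular inner function is never bounded on $\D$ (its modulus is $\exp(P\nu_c)$, which is unbounded along radii ending in the support of $\nu_c$), we must have $\nu_c = 0$, that is, $\nu = \nu_p$. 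This is (ii). The entire argument is a matter of matching formulations; the main work has already been performed in \thref{T:ThomsonDecompTheorem} and the corollary to \thref{T:CyclicityMainTheorem}, and no genuine obstacle appears at this assembly step.
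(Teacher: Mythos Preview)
Your proposal is correct and follows exactly the assembly argument the paper sketches in the paragraph preceding \thref{T:HbDensityTheorem}: cite \cite{limani2023problem} for necessity of (A) and (B), cite \cite{limani2024constructions} for sufficiency, and use \thref{T:ThomsonDecompTheorem} and the corollary to \thref{T:CyclicityMainTheorem} to match (A), (B) with (i), (ii). Two minor points: the paper attributes to \cite[Theorem B]{limani2024constructions} the sufficiency of conditions (i) and (ii) directly rather than of (A) and (B), and the corollary you invoke presupposes that $\Po^2(\mu)$ is analytic, so your equivalence (B) $\Leftrightarrow$ (ii) tacitly uses (A) --- neither affects the overall logic.
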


In other words, the two conditions say that both $\Delta$ and $\nu$ live on a countable union of sets in $\assoc{\Delta}$. The union corresponding to $\nu$ may need to be a blend of $\BC$-sets of both zero and positive Lebesgue measure. Note that Sarason's classical condition $\log \Delta \in \mathcal{L}^1(\T)$, which appears for instance in \cite{sarasonbook} and characterizes the density of the set of analytic polynomials $\Po$ in $\hb$, simply means that $\T \in \assoc{\Delta}$. In this case, conditions $(i)$ and $(ii)$ in \thref{T:HbDensityTheorem} are obviously satisfied.

\thref{T:HbDensityTheorem} can be seen as a companion to a previous result of the author and Aleman in \cite{comptesrenduscont}, in which it is asserted that functions in $\hb$ continuous up to the boundary $\T$ are always dense in $\hb$. 

\subsection{Outline of the paper}

In Section \ref{S:HausdorffFuncSec}, we introduce our principal optimization tools from \cite{bergqvist2024distributing} which form the technical backbone of the article. In Section \ref{S:ThomsonDecompProofSec} we use the results of the previous section to establish \thref{T:ThomsonDecompTheorem}. Section \ref{S:PermanenceSubsec} is concerned with establishing the necessity for cyclicity of the vanishing condition on associated sets in \thref{T:CyclicityMainTheorem}. The proof of \thref{T:CyclicityMainTheorem} is completed in Section \ref{S:CyclicitySection}. There, we combine the optimization tools from Section \ref{S:HausdorffFuncSec} with Korenblum's linear programming techniques from \cite{korenblum1975extension} and \cite{korenblum1977beurling} to establish the sufficiency for cyclicity of the vanishing condition in \thref{T:CyclicityMainTheorem}. Section~\ref{S:CyclicitySection} contains also proofs of some other auxiliary results mentioned above.

Mainly for convenience, we follow Korenblum's idea to use the concept of a premeasure. With a reader unfamiliar with this concept in mind, and because of our need for certain simple generalizations of Korenblum's premeasure results, we include Appendix \ref{sec:appendixA} which covers the basic facts regarding premeasures that are used in the proofs.

\section{An optimization problem and the Hausdorff functional} \label{S:HausdorffFuncSec}

In this section, we introduce our principal $\mathcal{L}^1$-optimization problem and the functionals $\mathcal{M}(h,R)$ from \cite{bergqvist2024distributing}. We derive the critical lower estimate in \eqref{E:MainEstimateResidualBound} on the optimal value in the optimization problem from a duality theorem relating it to the functional $\mathcal{M}(h,R)$. 

In the context of our main results, we think of $h$ as a positive scalar multiple of \eqref{E:CarlesonGaugeDef}, but the functional $\mathcal{M}(h,R)$ may be defined on a larger class of functions $h$ which we will introduce next. The broader context leads to no additional difficulties in the proofs, and on the upside it allows us to prove results slighly more general than the ones presented in \thref{T:ThomsonDecompTheorem} and \thref{T:CyclicityMainTheorem}.

\subsection{The principal $\mathcal{L}^1$-optimization problem}  \label{S:OptimProbSubsec}

The most important properties of the function $h$ in \eqref{E:CarlesonGaugeDef} are that $h$ is increasing and continuous on $(0,1]$, satisfies $h(0) := \lim_{x \to 0} h(x) = 0$, and additionally
\begin{enumerate}[(R1)]
    \item $h(x)/x$ is decreasing in $x$,
    \item $\lim_{x \to 0} h(x)/x = \infty$.
\end{enumerate} We will say that any function $h$ is a \textit{gauge function} if it satisfies the above properties. It is easy to verify subadditive inequality $h(x+y) \leq h(x) + h(y)$. The argument $x$ of $h(x)$ will for the most part be the length $|I|$ of an interval $I \subset \T$. We use throughout the article the normalization $|\T| = m(\T) = 1$. 

Given a non-negative function $R:\T \to [0,\infty]$ and a gauge function $h$, the family $\mathcal{F}(h,R)$ is to consist of all non-negative functions on $\T$ dominated pointwise by $R$ and satisfying a local mass distribution bound defined in terms of the gauge $h$:
\begin{equation}
    \label{E:FFamilyDef}
    \mathcal{F}(h,R) := \Big\{ f \in \mathcal{L}^1(d\m) : 0 \leq f \leq R, \int_\ell f \,d\m \leq h(|\ell|) \text{ for all intervals } \ell \Big\}.
\end{equation}

Denote by $\|f\|_1 = \int_\T |f| \, d\,m$ the usual $\mathcal{L}^1(d\m)$ norm of $f$. The optimal amount of mass that can be distributed under the constraints defining $\mathcal{F}(h,R)$, namely
\[ \sup \, \Big\{ \|f\|_1: f \in \mathcal{F}(h,R) \Big\},\] will be of main importance in the article.

\subsection{Hausdorff functionals and their duality}

Here is a dual way to express the previous supremum. With notation as above, we intoduce the quantity
\begin{equation}
    \label{E:HausdorffFuncDef}
    \mathcal{M}(h,R) := \inf_{ \mathcal{U}} \, \Big( \sum_{_{\ell \in \mathcal{U}}} h(|\ell|) + \int_{\T \setminus \cup_{\ell \in \mathcal{U}} \ell} R \, d\m \Big)
\end{equation} where the infimum above is taken over all families $\mathcal{U} = \{\ell\}$ of open intervals $\ell$ in $\T$. With $h$ fixed and $R = \infty \cdot 1_E$, where $1_E$ is the indicator function of a measurable subset $E \subset \T$, the mapping $E \mapsto \mathcal{M}(h,\infty \cdot 1_E)$ is similar to a degree to the definition of the usual Hausdorff content of the set $E$. We will call $\mathcal{M}(h,R)$ for the \textit{Hausdorff functional}. 

The importance of the Hausdorff functional comes from the following result established in \cite{bergqvist2024distributing}. It is of the type commonly encountered in the duality theory for linear programs.

\begin{prop}{\textbf{(Duality for Hausdorff Functionals)}}
    \thlabel{P:DualityThm}
    We have the inequality
    \[ 6 \cdot \sup \Big\{ \|f\|_1: f \in \mathcal{F}(h,R) \Big\} > \,  \mathcal{M}(h,R).\]
\end{prop}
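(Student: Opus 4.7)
The proposition is a strong-duality estimate with constant loss between the primal supremum $\sup\{\|f\|_1 : f \in \mathcal{F}(h,R)\}$ and the dual infimum $\mathcal{M}(h,R)$. The corresponding weak-duality bound $\sup \|f\|_1 \leq \mathcal{M}(h,R)$ is a one-line observation: for $f \in \mathcal{F}(h,R)$ and any family $\mathcal{U}$ of open intervals, split $\|f\|_1 = \int_{\bigcup_{\ell} \ell} f \, d\m + \int_{\T \setminus \bigcup_\ell \ell} f \, d\m$ and apply the interval constraint $\int_\ell f \leq h(|\ell|)$ on the first piece and the pointwise bound $f \leq R$ on the second. The substance of the proposition is therefore the reverse inequality with loss of a factor $6$.

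The plan is a Vitali-type covering argument initiated from a near-maximizer. Fix $f^{*} \in \mathcal{F}(h,R)$ with $\|f^{*}\|_1 > \sup \|f\|_1 - \varepsilon$ and set $E = \{x \in \T : f^{*}(x) < R(x)\}$, the set where the pointwise bound is not saturated. The central step is to show that for a.e.\ $x \in E$ there is an open interval $\ell_x \ni x$ on which the mass constraint is near-saturated in the sense that $\int_{\ell_x} f^{*} \, d\m \geq h(|\ell_x|) - \varepsilon|\ell_x|$. The heuristic is that if no such \emph{blocking} interval existed around $x$, one could locally bump $f^{*}$ upward by a small amount without violating either constraint, contradicting near-optimality. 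Making this step precise for merely measurable $f^{*}$ is what I expect to be the main obstacle; the natural route is a discretization of the problem over dyadic intervals followed by a compactness argument, as developed in \cite{bergqvist2024distributing}.

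With the intervals $\{\ell_x\}_{x \in E}$ in hand, I would invoke a Vitali covering lemma on $\T$ to extract a pairwise disjoint subfamily $\{\ell_k\}$ whose dilates $\{c\ell_k\}$ still cover $E$, with $c$ the Vitali constant (up to $5$ on the circle to handle wrap-around). Subadditivity of $h$ gives $h(ct) \leq c \, h(t)$, and combined with the near-saturation of each $\ell_k$ and their disjointness this yields
\[
\sum_k h(|c\ell_k|) \leq c \sum_k h(|\ell_k|) \leq c \sum_k \int_{\ell_k} f^{*} \, d\m + O(\varepsilon) \leq c\|f^{*}\|_1 + O(\varepsilon).
\]
On the complement $\T \setminus \bigcup_k c\ell_k \subset \T \setminus E$ we have $f^{*} = R$ a.e., so $\int_{\T \setminus \bigcup_k c\ell_k} R \, d\m \leq \|f^{*}\|_1$. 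Using $\mathcal{U} = \{c\ell_k\}$ as a test family in the definition of $\mathcal{M}(h,R)$ delivers $\mathcal{M}(h,R) \leq (c+1)\|f^{*}\|_1 + O(\varepsilon)$; with $c = 5$ this gives the stated $\mathcal{M}(h,R) \leq 6\|f^{*}\|_1 + O(\varepsilon)$, and letting $\varepsilon \to 0$ yields the inequality. Any slack between the constant obtained here and the stated $6$ is easily absorbed into the bookkeeping in the saturation step.
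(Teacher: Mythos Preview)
The paper does not contain its own proof of this proposition: it is quoted verbatim from \cite{bergqvist2024distributing} and used as a black box. There is therefore nothing in the present paper to compare your argument against.

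That said, your outline is the right shape, and your own caveat points at the genuine difficulty. The ``bump'' heuristic you give for the saturation step does not go through as written. If $x\in E$ and you perturb $f^*$ to $f^*+\tfrac{\varepsilon}{2}\mathbbm{1}_A$ on a small neighbourhood $A$ of $x$, you must check the interval constraint for \emph{every} interval $\ell$ meeting $A$, not only those containing $x$. An interval $\ell$ that clips one end of $A$ without containing $x$ may be arbitrarily short, and you have no assumed slack on such $\ell$; the perturbation could then push $\int_\ell f^*$ over $h(|\ell|)$. So near-optimality of $f^*$ does not by itself force a blocking interval through $x$. A second issue is that a maximizer need not exist: $\mathcal{F}(h,R)$ is not uniformly integrable (take $f=\tfrac{h(\delta)}{\delta}\mathbbm{1}_I$ on an interval of length $\delta$ and let $\delta\to0$), so one cannot simply pass to a weak limit in $\mathcal{L}^1$ and run the variational argument on an actual extremizer.

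Both obstructions are exactly why the cited paper discretizes: on a finite (dyadic) grid the optimization becomes a finite linear program, a maximizer exists, complementary slackness gives the saturation information cleanly, and one then passes to the limit in the grid. Your instinct to defer to that mechanism is correct; just be aware that the continuous variational argument you sketched is not merely a technicality to be patched but genuinely needs the discrete detour. Finally, note that your argument, once completed, delivers $\mathcal{M}(h,R)\le 6\sup\|f\|_1$ (non-strict) after sending $\varepsilon\to0$, whereas the proposition asserts a strict inequality; the strictness presumably comes from the specific construction in \cite{bergqvist2024distributing} rather than from a limiting argument of this kind.
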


We note that the reverse inequality \[\sup \Big\{ \|f\|_1: f \in \mathcal{F}(h,R) \Big\} \leq \,  \mathcal{M}(h,R)\] follows readily from the definitions (see \cite{bergqvist2024distributing}). 

\subsection{Generalized cores, residuals, and the residual bound}
\label{S:GenCoreResSubsec}

We may define the family $\assoch{h}{w}$ and the sets $\coreh{h}{w}$, $\resh{h}{w}$ analogously to how it was done in the Introduction by replacing the function in \eqref{E:CarlesonGaugeDef} with a more general gauge function. Namely, for any gauge function $h$, we define the family $\BCh$ of closed subsets of $\T$ which satisfy \eqref{E:BChDef}, and we let $\assoch{h}{w}$ be defined as in \eqref{E:AssociatedSetsDef} with $\BCh$ replacing $\BC$. The sets $\coreh{h}{w}$ and $\resh{h}{w}$ are also defined analogously. We set 
\[ \coreh{h}{w} = \bigcup_n E_n\] for any increasing sequence $\{E_n\}_n$ of $\BCh$ sets satisfying \[\lim_n |E_n| = \sup \Big\{|E| : E \in \assoch{h}{w} \Big\},\] and we set \[ \resh{h}{w} =  \{ z \in \T : w(z) > 0 \} \setminus \coreh{h}{w}.\] For $h$ in \eqref{E:CarlesonGaugeDef}, our generalized definitions reduce to the ones stated in the Introduction.

Properties of core and residual sets in the unweighted context are presented in \cite{malman2023thomson}. With some necessary and natural modifications, these properties carry over to the weighted context. The most important property of the set $\coreh{h}{w}$ is the implication 
\begin{equation}
    \label{E:EAssocCoreImplication}
    E \in \assoch{h}{w} \quad \Rightarrow \quad |E \setminus \coreh{h}{w}| = 0. 
\end{equation}
In other words, up to differences of Lebesgue measure zero, the set $\coreh{h}{w}$ contains all sets in $\assoch{h}{w}$. The implication \eqref{E:EAssocCoreImplication} is non-trivial only for sets $E$ of positive Lebesgue measure, of course. The proof of the implication is straight-forward (see the proof of \cite[Proposition 3.3]{malman2023thomson} for a similar argument).

The following weighted version of \cite[Proposition 3.5]{malman2023thomson} is the most important property of the set $\resh{h}{w}$ which we shall use.

\begin{lem}{\textbf{(Residual Lower Bound)}}
    \thlabel{L:ResBoundLemma} Let $R = \log^+(1/w)$. If $I$ is an interval in $\T$ and $R|I$ is the restriction of $R$ to $I$, then \[ \mathcal{M}(h, R|I) \geq h(|I \cap \resh{h}{w}|).\]
\end{lem}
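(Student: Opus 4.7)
The plan is to bound an arbitrary competitor in the infimum defining $\mathcal{M}(h, R|I)$ from below by $h(|I \cap \resh{h}{w}|)$. Let $\mathcal{U} = \{\ell\}$ be such a family of open intervals and set
\[ V := \sum_{\ell \in \mathcal{U}} h(|\ell|) + \int_{\T \setminus \bigcup_\ell \ell} R|I \, d\m. \]
The case $V = \infty$ is trivial, so I assume $V < \infty$. First I would normalize $\mathcal{U}$: subadditivity of $h$ ensures that replacing overlapping intervals by the maximal open arcs of their union does not increase $\sum h(|\ell|)$, so WLOG the $\ell$'s are pairwise disjoint. Since $R|I$ vanishes outside $I$, monotonicity of $h$ lets me further replace each $\ell$ by $\ell \cap I$, so that all $\ell$'s lie inside $I$. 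Define the closed set $E := I \setminus \bigcup_\ell \ell$, which satisfies $V = \sum_\ell h(|\ell|) + \int_E R \, d\m$.

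The crux is to show that $E \in \assoch{h}{w}$. The maximal complementary arcs of $E$ in $\T$ are the $\ell$'s, with those touching $\partial I$ glued to $\T \setminus I$; by subadditivity,
\[ \sum_{\text{compl.\ arcs of } E} h(|\cdot|) \leq \sum_\ell h(|\ell|) + h(|\T \setminus I|) < \infty, \]
so $E \in \BCh$. Furthermore $\int_E R \, d\m \leq V < \infty$ says $\int_E \log^- w \, d\m < \infty$, while $w \in \mathcal{L}^1(d\m)$ (since $\mu$ is a finite measure) combined with the elementary estimate $\log^+ w \leq 1 + w$ yields $\int_\T \log^+ w \, d\m < \infty$. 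Hence $\int_E \log w \, d\m > -\infty$ and $E \in \assoch{h}{w}$. Now I invoke the implication \eqref{E:EAssocCoreImplication} to conclude $|E \setminus \coreh{h}{w}| = 0$, and since $\resh{h}{w} \subset \T \setminus \coreh{h}{w}$, this gives $|E \cap \resh{h}{w}| = 0$.

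Finally, $I \cap \resh{h}{w} \subset (I \setminus E) \cup (E \cap \resh{h}{w})$ with the second set null, so $|I \cap \resh{h}{w}| \leq |\bigcup_\ell \ell| = \sum_\ell |\ell|$. Monotonicity together with subadditivity of $h$ then deliver
\[ h(|I \cap \resh{h}{w}|) \leq h\Bigl( \sum_\ell |\ell| \Bigr) \leq \sum_\ell h(|\ell|) \leq V, \]
and taking the infimum over $\mathcal{U}$ proves the lemma. The main obstacle is certifying the Beurling-Carleson property of the complement $E$ when $I$ is a proper sub-arc, because two of the arcs of $E$ merge with $\T \setminus I$; this is handled entirely by the subadditive bookkeeping at $\partial I$ displayed above. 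Everything afterwards rides on the general structural property \eqref{E:EAssocCoreImplication} from the previous subsection.
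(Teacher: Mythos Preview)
Your proof is correct and follows essentially the same route as the paper's own argument: reduce to a competitor $\mathcal{U}$ with finite value, form $E = I \setminus \bigcup_\ell \ell$, verify $E \in \assoch{h}{w}$, invoke \eqref{E:EAssocCoreImplication} to get $|E \cap \resh{h}{w}| = 0$, and finish via monotonicity and subadditivity of $h$. The only differences are cosmetic: you add explicit normalization of $\mathcal{U}$ (disjointness, containment in $I$) and spell out the $\log^+ w$ integrability via $w \in \mathcal{L}^1(d\m)$, whereas the paper simply assumes $I$ closed and cites the equivalence $\int_E \log^+(1/w)\,d\m < \infty \Leftrightarrow \int_E \log w\,d\m > -\infty$ directly.
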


In the above statement, and throughout the article, we use a slightly non-standard convention: we interpret the restriction $f|S$ of a function $f$ to a set $S$ as the function coinciding with $f$ on the set $S$, and vanishing elsewhere. As usual, we set $\log^+(x) := \max( \log x, 0)$. Note that we have the equivalence \[ \int_E \log^+(1/w)\, d\m < +\infty \quad  \Leftrightarrow \quad \int_E \log w \, d\m > -\infty. \] In other words, $E \in \BCh$ is in $\assoch{h}{w}$ if and only if $R$ is integrable on $E$.

\begin{proof}[Proof of \thref{L:ResBoundLemma}] Let $\mathcal{U} = \{\ell\}$ be any family of open intervals contained in $I$ for which the right-hand side in \eqref{E:HausdorffFuncDef} is finite (with $R$ replaced by $R|I$). Setting $E := I \setminus \cup_{\ell \in \mathcal{U}} \ell$, it follows that 
\begin{equation}
\label{E:EAssocIneq}
\int_E R \, d\m = \int_E \log^+(1/w) \, d\m < \infty.
\end{equation} If we assume that $I$ is closed, which we clearly may without loss of generality, then $E$ is closed. Since $\sum_{\ell \in \mathcal{U}} h(|\ell|) < \infty$, it follows that $E \in \BCh$, and \eqref{E:EAssocIneq} shows that in fact we have $E \in \assoch{h}{w}$. This implies by \eqref{E:EAssocCoreImplication} that $|E \setminus \coreh{h}{w}| = 0$, and so $E$ is contained in $\coreh{h}{w}$, up to a set of Lebesgue measure zero. In particular, we must have $|E \cap \resh{h}{w}| = 0$, from which it follows that $I \cap \resh{h}{w}$ is contained in $\cup_{\ell \in \mathcal{U}} \ell$, again up to a set of Lebesgue measure zero. But then \[ |I \cap \resh{h}{w}| \leq |\cup_{\ell \in \mathcal{U}} \ell| \leq \sum_{_{\ell \in \mathcal{U}}} |\ell|,\] and, since $h$ is increasing and subadditive, we obtain
\[ h(|I \cap \resh{h}{w}|) \leq \sum_{_{\ell \in \mathcal{U}}} h(|\ell|) \leq \sum_{_{\ell \in \mathcal{U}}} h(|\ell|) + \int_{I \setminus \cup_{\ell \in \mathcal{U}} \ell} R \, d\m.\] The claim follows by taking infimum over families $\mathcal{U} = \{\ell\}$ in the last inequality.
\end{proof}

Combining \thref{P:DualityThm} with \thref{L:ResBoundLemma}, and using notation as in \thref{L:ResBoundLemma}, we obtain the important lower bound

\begin{equation}
    \label{E:MainEstimateResidualBound}
    \sup \Big\{ \|f\|_1: f \in \mathcal{F}(h,R|I) \Big\} > \frac{h(|I \cap \resh{h}{w}|)}{6}
\end{equation}

This critical estimate will let us generalize the main result of \cite{malman2023thomson} into \thref{T:ThomsonDecompTheorem}. Note that a truncation shows that there always exist a \textit{bounded} function $f \in \mathcal{F}(h,R)$ for which $6 \int_\T f\, d\m = \mathcal{M}(h,R)$.

\section{Thomson decomposition of the measure}

\label{S:ThomsonDecompProofSec}

The goal of the section is to prove \thref{T:ThomsonDecompTheorem}. At no additional strain, we carry out the proof in the context of general gauge function $h$ and in this way obtain more general results. We assume throughout that $\log w \not\in \mathcal{L}^1(d\m)$, since the contrary case is trivial, as explained at the end of Section \ref{S:MainResSect}.

\subsection{A more general class of measures}

Given a gauge function $h$ satisfying the properties (R1) and (R2) stated in Section \ref{S:OptimProbSubsec}, we may associate to it the the family of measures 
\begin{equation}
    \label{E:MuGeneralizedDef}
    d\mu = G_{a h} dA + w \, d\m, \quad a > 0,
\end{equation} where \[ G_{a h}(z) = \exp \Big(-a \frac{h(1-|z|)}{1-|z|} \Big), \quad z \in \D.\]

Note that the functions $G_{a h}(x)$ are decreasing in $x \in (0,1)$, and so strictly speaking \eqref{E:MuGeneralizedDef} does not include measures of the form \eqref{E:MuDef} for $\alpha \in (-1,0)$, but this difference is insignificant. For certain choices of the gauge $h$ a direct generalization of \thref{T:ThomsonDecompTheorem} to measures of the form \eqref{E:MuGeneralizedDef} holds. The corresponding decomposition of the weight $w$ is 
\[ w_c = w|\coreh{h}{w}, \quad w_r = w|\resh{h}{w}\] with definitions as in Section \ref{S:GenCoreResSubsec}. 
More precisely, we will show that for any $h$ we have the decomposition
\begin{equation}
    \label{E:GeneralizedThomsonDecomp}
    \Po^t(\mu) = \Po^t( G_{a h}dA + w_c \, d\m) \oplus \mathcal{L}^t(w_r \, d\m).
\end{equation} The irreducibility of the piece $\Po^t( G_{a h} + w_c \, d\m)$ will be established in Section \ref{S:IrreducibilityProofSec} below, but only under some additional regularity assumptions on $h$ which in particular hold for the gauge function in \eqref{E:CarlesonGaugeDef}.

In context of the measures $\mu$ of the form \eqref{E:MuDef}, we always assume that the gauge $h$ has the form \eqref{E:CarlesonGaugeDef}.

\subsection{A sufficient condition for establishing the decomposition}

To establish the direct sum decomposition in \thref{T:ThomsonDecompTheorem}, or the one in \eqref{E:GeneralizedThomsonDecomp}, it will suffice to show that
\begin{equation}
    \label{E:L2wrContP2mu}
    \mathcal{L}^t(w_r\, d\m) \subset \Po^t(\mu).
\end{equation} Here we interpret $f \in \mathcal{L}^t(w_r\, d\m)$ as an element of $\mathcal{L}^t(\mu)$ by extending $f$ to be zero outside of the set $\{ z \in \T : w_r(z) > 0 \}$ (which is well-defined up to a set of Lebesgue measure zero). Indeed, if \eqref{E:L2wrContP2mu} holds, then for every $f \in \Po^t(\mu)$ we have that $f - f|\resh{h}{w} \in \Po^t(dA_\alpha + w_c \, d\m) \cap \Po^t(\mu)$, and $f| \resh{h}{w} \in \mathcal{L}^t(w_r \, d\m) \cap \Po^t(\mu)$. 

The following lemma, essentially contained in \cite[Lemma 2.1 and proof of Lemma 3.1]{malman2023revisiting}, lets us reduce our task to a construction of a specific function inside $\Po^t(\mu)$.

\begin{lem} \thlabel{L:ThompsonDecompReductionLemma}
In order to establish \eqref{E:L2wrContP2mu}, it suffices to show that there exists $F \in \Po^t(\mu)$ which satisfies $F|\D \equiv 1$, $F|\resh{h}{w} \equiv 0$, and $F|\T \in \mathcal{L}^{t^*}(w \,d\m)$ for some $t^* > \max(t,1)$.
\end{lem}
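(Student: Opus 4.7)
The plan is to exhibit $1-F$ as a ``multiplier'' that realizes $\mathcal{L}^t(w_r\,d\m)$ inside $\Po^t(\mu)$. The starting observation is that for every bounded $g$ supported on $\resh{h}{w}$ (extended by zero off this set), the product $g(1-F)$ coincides $\mu$-almost everywhere with $g$ itself: on $\D$ both vanish because $F|\D \equiv 1$; on $\resh{h}{w}$ both equal $g$ because $F|\resh{h}{w} \equiv 0$; and on the rest of $\T$ both vanish because $g$ does. So it suffices to realize $g(1-F)$ as a limit of polynomials in $\mathcal{L}^t(\mu)$ and then to pass from bounded $g$ to arbitrary $g$ by truncation.

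\textbf{Step 1.} The first step is to verify that $q(1-F) \in \Po^t(\mu)$ for every analytic polynomial $q$. If $(p_n)$ are polynomials with $p_n \to F$ in $\mathcal{L}^t(\mu)$, then since $q$ is uniformly bounded on $\cD$ one gets $qp_n \to qF$ in $\mathcal{L}^t(\mu)$, so $qF \in \Po^t(\mu)$ and in turn $q - qF = q(1-F) \in \Po^t(\mu)$.

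\textbf{Step 2.} Next, fixing a bounded $g$ supported on $\resh{h}{w}$, pick polynomials $q_n$ with $q_n \to g$ in the norm of $\mathcal{L}^r(w\,d\m)$, where $r := tt^*/(t^*-t)$. The existence of such approximants is furnished by Szeg\"o-type density of analytic polynomials in $\mathcal{L}^r(w\,d\m)$, a consequence of the standing assumption $\log w \notin \mathcal{L}^1(d\m)$ in this section. Using the identification $g(1-F) = g$ in $\mathcal{L}^t(\mu)$ from the plan above and H\"older's inequality with conjugate exponents $r/t$ and $t^*/t$,
\[
\|g - q_n(1-F)\|_{\mathcal{L}^t(\mu)}^t = \int_\T |g-q_n|^t\,|1-F|^t\,w\,d\m \le \|g-q_n\|_{\mathcal{L}^r(w\,d\m)}^t \cdot \|1-F\|_{\mathcal{L}^{t^*}(w\,d\m)}^t.
\]
The first factor tends to zero by construction, while the second is finite by the hypothesis on $F$. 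Combined with Step~1, this realizes $g$ as the $\mathcal{L}^t(\mu)$-limit of functions in $\Po^t(\mu)$, so $g \in \Po^t(\mu)$.

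\textbf{Main obstacle.} The only delicate point is the invocation of Szeg\"o-type density of polynomials in $\mathcal{L}^r(w\,d\m)$ for this particular exponent $r = tt^*/(t^*-t)$. When $t \geq 1$ one has $r \geq 1$ and the classical theorem suffices; for $t < 1$, $r$ may drop below one and one needs the extension of Szeg\"o density to the non-Banach $\mathcal{L}^r$ setting, which is still characterized by the same condition $\log w \notin \mathcal{L}^1(d\m)$. The hypothesis $t^* > \max(t,1)$ in the statement is precisely what makes $r$ a well-defined finite exponent, guarantees $|1-F|$ has enough integrability on $\T$ to play the role of the second factor in the H\"older estimate, and keeps the required Szeg\"o-type density accessible.
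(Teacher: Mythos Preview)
Your argument is correct and takes a genuinely different route from the paper's proof. The paper sets $g=1-F$, looks at the $M_z$-invariant subspace $[g]\subset\mathcal{L}^t(w\,d\m)$, and applies the Beurling--Wiener dichotomy to $w^{1/t}[g]$: the alternative $qH^t$ is eliminated by combining $\log w\notin\mathcal{L}^1(d\m)$ with $g\in\mathcal{L}^{t^*}(w\,d\m)$, forcing $w^{1/t}[g]=\mathcal{L}^t(d\m|S)$ with $S\supseteq\resh{h}{w}$. Your approach instead uses Szeg\H{o} density of analytic polynomials in $\mathcal{L}^r(w\,d\m)$ (itself a consequence of the same Beurling--Wiener dichotomy) together with a H\"older estimate to approximate directly. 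Both arguments consume the same two hypotheses, $\log w\notin\mathcal{L}^1$ and $F|\T\in\mathcal{L}^{t^*}$, at analogous points; yours is a bit more hands-on, the paper's more structural.

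One remark on your ``main obstacle'': the worry about $r<1$ is avoidable. Since $g$ is bounded and $w\,d\m$ is a finite measure, you may approximate $g$ by polynomials in $\mathcal{L}^{r'}(w\,d\m)$ for any $r'\geq\max(r,1)$ and then use the embedding $\mathcal{L}^{r'}(w\,d\m)\hookrightarrow\mathcal{L}^{r}(w\,d\m)$ to feed the H\"older estimate. So the classical Szeg\H{o} theorem (for exponents $\geq1$) already suffices, and you need not invoke any $r<1$ extension.
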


\begin{proof}
If $f \in \Po^t(\mu)$ is as stated, then $g = 1 - F \in \Po^t(\mu)$ vanishes on $\D$ and is non-zero almost everywhere on $\resh{h}{w}$. The $M_z$-invariant subspace $[g]$ of $\Po^t(\mu)$ generated by $g$ is then a subspace of $\mathcal{L}^t(w\, d\m)$, with containment interpreted as in \eqref{E:L2wrContP2mu}.

Let us first assume that $t > 1$. Then, since $[g]$ is a subspace of $\mathcal{L}^t(w \, d\m)$ which is invariant for the operator $M_z$, the space \[ w^{1/t}[g] := \{ w^{1/t} h : h \in [g] \}\] is contained in $\mathcal{L}^t(d\m)$ and it has one of the two forms well known from the classical Beurling-Wiener Theorem (see \cite[Section 2.3]{malman2023revisiting}). Namely, either we have 
\begin{equation}
\label{E:BeurlingSubspace}
w^{1/t}[g] = \{ q H : H \in \hil^t \}
\end{equation} for some unimodular function $q$ on $\T$, $\hil^t$ being the Hardy space, or for some $S \subset \T$ we have
\begin{equation}
\label{E:WienerSubspace}
w^{1/t}[g] = \mathcal{L}^t(d\m|S) = \{f \in \mathcal{L}^t(d\m) : f \equiv 0 \text{ \m-a.e outside of } S \}.
\end{equation}
In the first case, every non-zero function $d \in [g]$ can be expressed as $w^{1/t}d = qH$ for some non-zero $H \in \hil^t$, and so 
\[ \int_\T \log (w^{1/t}|d|) \, dm = \int_\T \log |H| \, d\m > -\infty,\] the last inequality being a well-known property of non-zero functions in $\hil^t$. Since $g \in \mathcal{L}^{t^*}(w \, d\m)$, $t^* > t$, a simple computation shows that \[ \int_\T \log(w^{1/t}|g|) d\m =-\infty\] (see \cite[proof of Lemma 3.1]{malman2023revisiting} and recall that we assume $\log w \not \in \mathcal{L}^1(d\m)$). Since we can set $d = g$ above, the alternative \eqref{E:BeurlingSubspace} is excluded, and we deduce that we are in the situation in \eqref{E:WienerSubspace}. Since  $w^{1/t}g$ does not vanish on the set $\resh{h}{w}$, the set $S$ in \eqref{E:WienerSubspace} must contain $\resh{h}{w}$, up to a difference of Lebesgue measure zero. It follows that if $f \in \mathcal{L}^t(w_r \, d\m)$, then \[w^{1/t}f \in \mathcal{L}^t(d\m|S) = w^{1/t}[g] \] Hence $\mathcal{L}^t( w_r d\m) \subset [g]$, and we have verified \eqref{E:L2wrContP2mu} in the case $t > 1$.

If $t \leq 1$, then since for $t < t'$, convergence in $\mathcal{L}^{t'}(w \, d\m)$ implies convergence in $\mathcal{L}^t(w \, d\m)$, the previous argument may be applied to some $t' > 1$ slightly smaller than $t^*$ to conclude that $[g]$ (which, we emphasize, is a subspace of $\Po^t(\mu)$) contains the corresponding $M_z$-invariant subspace generated by $g$ in $L^{t'}(w \, d\m)$. By the proof of the case $t > 1$, in particular $[g]$ will contain all bounded measurable functions living on the set where $w_r$ is non-zero. A density argument then shows that $[g]$ contains $L^t(w_r \, d\m)$.
\end{proof}

\subsection{The construction} \label{S:ConstrSubsecSplitting}

We proceed to show how to obtain $f \in \Po^t(\mu)$ satisfying the properties mentioned in \thref{L:ThompsonDecompReductionLemma}. The function $f$ will be obtained as a weak cluster point of the outer functions 
\begin{equation}
    \label{E:BigF_NDef}
    F_N(z) := \exp \Big( \int_{\T} \frac{\zeta + z}{\zeta - z} f_N(\zeta) d\m(\zeta) \Big), \quad z \in \D, N \in \mathbb{N}
\end{equation} where $f_N \in \mathcal{L}^\infty(d\m)$ are carefully chosen real-valued functions. Note that $F_N \in \hil^\infty$, and so the containment $F_N \in \Po^t(\mu)$ follows from a straight-forward argument involving a dilation and a Taylor series truncation. 

The following result is the improvement of \cite[Lemma 3.1]{bergqvist2024distributing} needed to establish the sought-after direct sum decomposition. The proof is the same, with the exception that we utilize the new inequality \eqref{E:MainEstimateResidualBound}. 

\begin{lem}[\textbf{Main Construction}]
\thlabel{L:ConstructionLemma}
Let $h$ be a gauge function. There exists a sequence of real-valued function $\{f_N\}_N$ on $\T$ satisfying the following conditions:
\begin{enumerate}[(i)]
    \item for any $\epsilon > 0$, the pointwise inequality $f_N \leq \epsilon \cdot \log^+(1/w)$ holds for all $N$ sufficiently large,
    \item for any $\epsilon > 0$, the inequality $\int_I f_N \, d\m \leq \epsilon \cdot h(|I|)$ holds for all intervals $I \subset \T$ and all $N$ sufficiently large,
    \item $\int_\T f_N \, d\m = 0$,
    \item $f_N \leq 0$ on $\resh{h}{w}$, and $f_N(x) \to -\infty$ $\m$-almost everywhere on $\resh{h}{w}$.
\end{enumerate}    
\end{lem}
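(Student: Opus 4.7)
The plan is to mirror the proof of Lemma~3.1 in \cite{bergqvist2024distributing}, with the sole modification of invoking the sharper residual lower bound \eqref{E:MainEstimateResidualBound} in place of the weaker estimate available there. I would write $f_N = p_N - g_N$, where $p_N \geq 0$ is supported essentially on $\T \setminus \resh{h}{w}$ and $g_N \geq 0$ is supported on $\resh{h}{w}$: the positive piece is engineered to deliver (i) and (ii), the negative piece is engineered to deliver (iv), and the balance $\|p_N\|_1 = \int g_N \, d\m$ gives (iii).

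Concretely, I would select sequences $\epsilon_N \downarrow 0$ and $\lambda_N \uparrow \infty$, set $g_N = \lambda_N \cdot 1_{\resh{h}{w}}$ (so that $f_N \equiv -\lambda_N$ on $\resh{h}{w}$, making (iv) immediate), and then construct a bounded $p_N$ satisfying the pointwise bound $p_N \leq \epsilon_N R$ with $R = \log^+(1/w)$, the relaxed Carleson inequality $\int_I p_N \, d\m \leq \epsilon_N h(|I|) + \lambda_N |I \cap \resh{h}{w}|$ for every interval $I \subset \T$, and the mass constraint $\|p_N\|_1 = \lambda_N |\resh{h}{w}|$. The existence of such $p_N$ reduces to an extension of the optimization problem of Section~\ref{S:OptimProbSubsec}: a routine variant of \thref{P:DualityThm} applied to the modified gauge $I \mapsto \epsilon_N h(|I|) + \lambda_N |I \cap \resh{h}{w}|$, combined with \thref{L:ResBoundLemma} and the implication \eqref{E:EAssocCoreImplication}, yields that the maximal $L^1$ mass admissible in this class is at least a constant multiple of $\lambda_N |\resh{h}{w}|$. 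The truncation remark following \eqref{E:MainEstimateResidualBound} ensures $p_N$ can be taken in $\mathcal{L}^\infty(d\m)$.

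With such $p_N$ and $g_N$ in hand, conditions (i) and (ii) for $f_N$ follow directly from the corresponding properties of $p_N$: pointwise, $f_N \leq p_N \leq \epsilon_N R$, and on every interval $I \subset \T$, $\int_I f_N \, d\m = \int_I p_N \, d\m - \lambda_N |I \cap \resh{h}{w}| \leq \epsilon_N h(|I|)$. Conditions (iii) and (iv) are built into the construction.

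The main technical obstacle is the joint calibration of $\epsilon_N \downarrow 0$ and $\lambda_N \uparrow \infty$ so that the constraints remain consistent. Fatou's lemma applied to (iv) forces $\int g_N \, d\m \to +\infty$, hence $\|p_N\|_1 \to +\infty$, even while $p_N$ is pointwise dominated by the shrinking quantity $\epsilon_N R$; this is feasible only because the standing hypothesis $\log w \not\in \mathcal{L}^1(d\m)$ implies, whenever $|\resh{h}{w}| > 0$, that $\int_\T R \, d\m = +\infty$ (otherwise $\T \in \assoch{h}{w}$, forcing $\resh{h}{w}$ to be null), which in turn affords $p_N$ enough room to accumulate large $L^1$ mass on the sets where $R$ is extremely large. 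The improved estimate \eqref{E:MainEstimateResidualBound}, with its quantitative dependence on $h(|\resh{h}{w}|)$, is exactly the ingredient that allows the duality argument to be pushed through with the relaxed gauge and replaces the weaker bound used in \cite{bergqvist2024distributing}.
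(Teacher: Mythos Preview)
Your global scheme has a genuine gap at the balancing step. Even granting your unproven ``routine variant'' of \thref{P:DualityThm} for the position-dependent functional $I \mapsto \epsilon_N h(|I|) + \lambda_N|I\cap\resh{h}{w}|$, duality delivers only
\[
\sup\|p\|_1 \;>\; \tfrac{1}{6}\,\lambda_N\,|\resh{h}{w}|,
\]
not $\sup\|p\|_1 \geq \lambda_N|\resh{h}{w}|$. With any $p_N$ of mass $c\lambda_N|\resh{h}{w}|$ for $c<1$, you must take $g_N = c\lambda_N \cdot 1_{\resh{h}{w}}$ to enforce (iii), and then the interval estimate you built into $p_N$ yields only
\[
\int_I f_N \,\leq\, \epsilon_N h(|I|) + (1-c)\,\lambda_N\,|I\cap\resh{h}{w}|,
\]
which blows up as $\lambda_N\to\infty$ whenever $|I\cap\resh{h}{w}|>0$. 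So (ii) fails. No rescaling of $\lambda_N$ fixes this: the lossy constant in duality always leaves an unbalanced positive term of order $\lambda_N$.

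The paper avoids this by \emph{localizing} rather than working globally. It partitions $\T$ into $N$ arcs $I_k$ of length $1/N$, applies \eqref{E:MainEstimateResidualBound} on each $I_k$ to produce a positive piece $g_{N,k}\in\mathcal{F}(h,R|I_k)$ with $\int_{I_k} g_{N,k}\geq h(|I_k\cap\resh{h}{w}|)/6$, and then balances each arc \emph{separately} by a negative constant $-C_{N,k}$ on (a truncation of) $I_k\cap\resh{h}{w}$. The arc-wise zero integral makes (ii) essentially automatic: only the two boundary arcs contribute, giving $\int_I f_N \lesssim h(|I|)/b(N)$. The divergence in (iv) comes from the lower bound $C_{N,k} \gtrsim N h(1/N)$ obtained via property (R1), and the growth $Nh(1/N)\to\infty$ from (R2). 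This localization is not a cosmetic choice: it is precisely what converts the single, bounded global estimate $h(|\resh{h}{w}|)/6$ into the divergent sum $\sum_k h(|I_k\cap\resh{h}{w}|)/6$, and what eliminates the need to hit an exact global mass target.
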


\begin{proof}
Fix a positive integer $N$ and set \begin{equation}
    \label{E:ANdef} A_N := \{ \zeta \in \resh{h}{w} : R(\zeta) = \log^+\big(1/w(\zeta)\big) \leq M(N)\},  
\end{equation} where $M(N)$ is some positive number soon to be specified. Note that if $M(N) \to \infty$ as $N \to \infty$, then $\cup_{N} A_N$ is a set of full measure in $\resh{h}{w}$, namely $|\cup_N A_N| = |\resh{h}{w}|$. This will be important at the end of the proof, when part $(iv)$ above will be verified for our construction. 

Denote by $1_A$ the indicator function of a set $A$. Divide $\T$ into $N$ half-open intervals $\{I_k\}_{k=1}^N$ of equal length, and define $f_N$ according to 
\begin{equation}
    \label{E:f_Ndef}
    b(N) \cdot f_N := \sum_{k : |I_k \cap A_N| > 0} g_{N,k} \cdot 1_{I_k \setminus A_N}  - C_{N,k} \cdot 1_{I_k \cap A_N},
\end{equation} where $b(N)$ is a large positive number soon to be specified, where $g_{N,k} \in \mathcal{F}(h,R|I_k)$ is bounded and satisfies 
\begin{equation}
    \label{E:GNkLowerMassEst}
    \int_{I_k} g_{N,k} \, d\m \geq \frac{h(|I_k \cap \resh{h}{w}|)}{6} \end{equation}
and where $C_{N,k}$ is the unique positive number which ensures that $\int_{I_k} f_N \, d\m = 0$, that is, 
\begin{equation}
    \label{E:CNkdef}
    C_{N,k} = \frac{\int_{I_k\setminus A_N} g_{N,k} \, d\m}{|I_k \cap A_N|}.
\end{equation} The existence of $g_{N,k}$ is ensured by inequality the \eqref{E:MainEstimateResidualBound}. 

Since $g_{N,k} \in \mathcal{F}(h,R|I_k)$, it follows that $b(N) \cdot f_{N} \leq \log^+(1/w)$ on $\T$, and so we ensure $(i)$ if $b(N)$ is chosen sufficiently large. In particular, this holds if a priori we have $b(N) \to \infty$ as $N \to \infty$, which we will be able to ensure.

If $I$ is any interval on $\T$, then $b(N)\cdot f_N$ integrates to zero over any interval $I_k$ contained in $I$, and so if $I_l$ and $I_r$ are the the two intervals among $\{I_k\}_{k=1}^N$ which contain the endpoints of $I$, we obtain
\begin{align*}
\int_I b(N) \cdot f_N \, d\m &= \int_{I_l} b(N) \cdot f_N \, d\m + \int_{I_r} b(N) \cdot f_N \, d\m \\
&\leq  \int_{I_l \cap I} g_{N,l} \, d\m + \int_{I_r \cap I} g_{N,r} \, d\m \\
&\leq 2\cdot h(|I|).
\end{align*}   We used that $g_{N, l} \in \mathcal{F}(h,R|I_l)$ and $g_{N,r} \in \mathcal{F}(h,R|I_r)$ in the last inequality. Again, we obtain $(ii)$ if $b(N) \to \infty$ as $N \to \infty$.

Part $(iii)$ holds since \[b(N) \cdot \int_\T f_N \, d\m = b(N) \cdot \sum_{k: |I:k \cap A_N| > 0} \int_{I_k} f_N \, d\m = 0.\]

It remains to show that choices of $M(N)$ in \eqref{E:ANdef} and $b(N)$ in \eqref{E:f_Ndef} can be made so that $(iv)$ holds. Since $A_N \subset \resh{h}{w}$ implies $|I_k \cap A_N| \leq |I_k \cap \resh{h}{w}|$, and $g_{N,k} \leq R \leq M(N)$ on $A_N$, we may use \eqref{E:GNkLowerMassEst} and \eqref{E:CNkdef} to estimate 
\begin{align*}
    |I_k \cap \resh{h}{w}| \cdot C_{N,k} &\geq |I_k \cap A_N| \cdot C_{N,k} \\
    &= \int_{I_k} g_{N,k}\, d\m - \int_{I_k \cap A_N} g_{N,k} \, d\m \\
    &\geq \frac{h(|I_k \cap \resh{h}{w}|)}{6} - |I_K \cap A_N| \cdot M(N) \\
    &\geq \frac{h(|I_k \cap \resh{h}{w}|)}{6} - |I_k \cap \resh{h}{w}| \cdot M(N).
\end{align*} If $|I_k \cap A_N| > 0$, then we may divide by the larger quantity $|I_k \cap \resh{h}{w}|$ to obtain
\begin{equation}
    \label{E:CNkLowerEst}
    C_{N,k} \geq \frac{h(|I_k \cap \resh{h}{w}|)}{6 |I_k \cap \resh{h}{w}|} - M(N) \geq \frac{h(1/N)}{6/N} - M(N). 
\end{equation} The second of the above inequalities holds since $|I_k \cap \resh{h}{w}| \leq |I_k| = 1/N$, and $h$ satisfies property (R1) stated in Section \ref{S:OptimProbSubsec}. Set \[ M(N) = b(N) = \sqrt{N\cdot h(1/N)}.\] Then $M(N), b(N)$ tend to $\infty$ as $N \to \infty$ as a consequence of (R2), and we obtain from \eqref{E:CNkLowerEst} that \[ \frac{C_{N,k}}{b(N)} \geq \frac{\sqrt{N h(1/N)}}{12}\] for $N$ sufficiently large. Since $f_N = -C_{N,k}/b(N)$ on $I_k \cap A_N$ irrespective of $k$, we obtain \[ f_N \leq -\frac{\sqrt{N h(1/N)}}{12}\] on $A_N$. Since the sets $A_N$ are increasing with $N$, and $\cup_N A_N$ is a set of full measure in $\resh{h}{w}$, property (R2) ensures that $(iv)$ also holds. 
\end{proof}

It is well known that the outer function $F_N$ satisfies $|F_N| = |\exp(f_N)|$ $\m$-almost everywhere on $\T$ . Thus for any small $\epsilon$ and all sufficiently large $N$, we obtain from $(i)$ of \thref{L:ConstructionLemma} that
\[ |F_N|^t w \leq \exp \big(t \epsilon \log^+(1/w)\big)w \leq w + w^{1- t\epsilon} \in \mathcal{L}^1(d\m),\] which shows that the family $\{F_N\}_N$ is uniformly norm bounded in $\mathcal{L}^{t}(w\, d\m)$ for any $t \in (0, \infty)$. We have also the estimate
\begin{equation}
    \label{E:fNDiskEstimate}
    |F_N(z)| \leq \exp\Big( \epsilon \frac{h(1-|z|)}{1-|z|)} \Big), \quad z \in \D 
\end{equation} for any small $\epsilon > 0$ and all sufficiently large $N$. This follows immediately from a classical Poisson integral estimate and part $(ii)$ of \thref{L:ConstructionLemma}. Namely, if $h$ is a gauge function, $\sigma$ is a real-valued Borel measure satisfying $\sigma(\T) = 0 $, and $\sigma(I) \leq h(|I|)$ for all intervals $I \subset \T$, then there exists a constant $C = C(h) > 0$ for which the estimate
\begin{equation}
    \label{E:PoissonEstimate0}
    \Re \Bigg(  \int_{\T} \frac{\zeta + z}{\zeta - z} d\sigma(\zeta) \Bigg) = 
    \int_{\T} \frac{1-|z|^2}{|\zeta-z|^2} d\sigma(\zeta) \leq C \cdot\frac{h(1-|z|)}{1-|z|}, \quad z \in \D
\end{equation} holds. See \cite[p. 297]{havinbook} for a simple proof.

For $h$ as in \eqref{E:CarlesonGaugeDef}, whenever $\epsilon$ is so small that $\alpha - t\epsilon > -1$ and $N$ is sufficiently large, we obtain from \eqref{E:fNDiskEstimate} that 
\[ |F_N(z)|^t (1-|z|)^{\alpha} \leq 3 \cdot (1-|z|)^{\alpha-t\epsilon} \in \mathcal{L}^1(dA).\] Thus $\{F_N\}_N$ is a uniformly norm bounded subset of $\mathcal{L}^t(dA_\alpha)$ for any $t \in (0,\infty)$. Similarly, we obtain from \eqref{E:fNDiskEstimate} that for any gauge function $h$ and all large $N$ we have \[|F_N(z)|^t G_{ah}(z) \leq 1, \quad z \in \D.\]
Combining our estimates, we have just verified that for any of the considered measures $\mu$ and any $t > 0$, the family $\{F_N\}_N$ is uniformly norm bounded in $\mathcal{L}^t(\mu)$. 

\begin{proof}[Proof of the direct sum decomposition in \thref{T:ThomsonDecompTheorem} and in \eqref{E:GeneralizedThomsonDecomp}]
Let $t^* > \max(t,1)$. By reflexivity of $\mathcal{L}^{t^*}(\mu)$, we may pass to a subsequence to ensure that the sequence $\{F_N\}_N$ constructed above converges weakly to some $F \in \mathcal{L}^{t^*}(\mu)$. Since $\Po^{t^*}(\mu)$ is norm-closed in $\mathcal{L}^{t^*}(\mu)$, it is also weakly closed, and so $F \in \Po^{t^*}(\mu)$. The estimate \eqref{E:fNDiskEstimate} shows that $\{F_N\}_N$ is a normal family of analytic functions in $\D$, so by refining the subsequence we may assume that $\{F_N\}_N$ converges pointwise in $\D$.  Note that from $t^* > t$ and the containment $F \in \Po^{t^*}(\mu)$ we obtain that $F \in \Po^t(\mu)$, since a sequence of analytic polynomials converging to $F$ in $\Po^{t^*}(\mu)$ converges also to $F$ in $\Po^t(\mu)$. We need to verify that $F$ satisfies the conditions of \thref{L:ThompsonDecompReductionLemma}. By property $(iii)$ of \thref{L:ConstructionLemma}, we get $F(0) = \lim_N F_N(0) = 1$. Also, by \eqref{E:fNDiskEstimate} and by letting $\epsilon \to 0$, we get $|F(z)| = \limsup_{N \to \infty} |F_N(z)| \leq 1$ for $z \in \D$. By the maximum modulus principle, we obtain that $F \equiv 1$ in $\D$. We see  from part $(iv)$ of \thref{L:ConstructionLemma} that $|F_N| \leq 1$ on $\resh{h}{w}$ and also $|F_N| \to 0$ $d\m$-almost everywhere on $\resh{h}{w}$. The weak convergence in $\mathcal{L}^{t^*}(\mu)$ implies in particular that the restrictions $F_N|\T$ converge weakly in $\mathcal{L}^{t^*}(w \, dm)$, and the weak $\mathcal{L}^{t^*}(w\, d\m)$ limit equals $F|\T$. We readily deduce from the pointwise convergence of $\{F_N\}_N$ to $0$ on $\resh{h}{w}$ that $F \equiv 0$ on $\resh{h}{w}$. Thus $F$ satisfies the hypotheses of \thref{L:ThompsonDecompReductionLemma}, and our proof is complete.
\end{proof}

\subsection{Irreducibility proof} 
\label{S:IrreducibilityProofSec}

The irreducibility of the first summand on the right hand side in the displayed equation in \thref{T:ThomsonDecompTheorem} holds if we assume some additional properties of $h$. One of them is
\begin{enumerate}[(R3)]
    \item For some constant $C = C(h)$ we have $\int_{0}^a h(x) \, dx \leq C \cdot h(a)$, $a \in (0,1)$.
\end{enumerate}
An easy computation shows that $h(x) = x \log(e/x)$ satisfies (R3).
The proof of irreducibility under this assumption is essentially contained in Khrushchev's article \cite{khrushchev1978problem} and is stated without proof in \cite{kriete1990mean}. We need a simple adaptation. 

\begin{lem}[\textbf{Khrushchev's Uniqueness Theorem}]
    \thlabel{L:KhrushchevLemma} Assume that the gauge function $h$ satisfies the additional condition (R3). Let $E \in \assoch{h}{w}$ have positive Lebesgue measure and $w_E$ be the restriction of $w$ to $E$. If $\{p_n\}_n$ is a sequence of analytic polynomials, $p_n \to f$ in $\mathcal{L}^t(w_E\, d\m)$, and if the following two conditions hold:
    \begin{enumerate}[(i)]
        \item $|p_n(z)| \leq \exp\Big( a \frac{h(1-|z|)}{1-|z|)} \Big)$ for some $a > 0$ and for all $z \in \D$,
        \item $p_n(z) \to 0$ for $z \in \D$,
    \end{enumerate}
    then $f \equiv 0$.
\end{lem}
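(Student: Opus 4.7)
The plan is to construct a Beurling-Carleson outer function $G$ adapted to $E$ so that multiplication by $G$ converts the interior vanishing of $p_n$ into the vanishing of $f$ on $E$, via Hardy-space boundary theory. Specifically, I would build an outer function $G \in \hil^\infty$ with $|G| = 1$ $\m$-a.e.\ on $E$ and with boundary modulus concentrated on the maximal arcs $\{I_k\}$ complementary to $E$, distributed so that $\int_{I_k} \log|G| \, d\m \asymp -h(|I_k|)$. The Beurling-Carleson summability $\sum_k h(|I_k|) < \infty$ then guarantees $\log|G| \in \mathcal{L}^1(d\m)$, and the key claim is that, for such $G$, the growth bound (i) together with the Poisson integral estimate \eqref{E:PoissonEstimate0} (whose quantitative form is governed by property (R3) of the gauge $h$) forces $\{G p_n\}$ to be a bounded subset of some Hardy space $\hil^s$, $s > 1$, uniformly in $n$.

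Given such $G$, the remainder of the argument runs as follows. Since $\{G p_n\}$ is bounded in $\hil^s$ and $G p_n(z) \to 0$ for every $z \in \D$ by hypothesis (ii), weak compactness in $\hil^s$ yields $G p_n \to 0$ weakly in $\mathcal{L}^s(d\m)$. Separately, from $p_n \to f$ in $\mathcal{L}^t(w_E \, d\m)$ and $w > 0$ $\m$-a.e.\ on $E$ (a consequence of $E \in \assoch{h}{w}$), I pass to a subsequence along which $p_n \to f$, and hence $G p_n \to G f$, pointwise $\m$-a.e.\ on $E$. Uniform boundedness of the boundary values of $G p_n$ in $\mathcal{L}^s(E, d\m)$ supplies the uniform integrability needed for Vitali's theorem, which upgrades the pointwise convergence to strong convergence $G p_n \to G f$ in $\mathcal{L}^1(E, d\m)$. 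Testing the weak limit against indicators of measurable subsets $A \subset E$ then gives $\int_A Gf \, d\m = 0$ for every such $A$, so $Gf = 0$ $\m$-a.e.\ on $E$, and since $|G| = 1$ there we conclude $f = 0$.

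The main obstacle is the outer-function construction together with the verification of the uniform $\hil^s$ bound. A naive choice with $\log|G|$ constant on each $I_k$ fails to give a pointwise bound $|G p_n| \lesssim 1$ throughout $\D$; indeed, no harmonic $\log|G|$ that vanishes on $E$ can decay as fast as $-h(1-|z|)/(1-|z|)$ at all $z$ close to $\T$, since the Poisson integral of a function vanishing near a Lebesgue point of $E$ must tend to zero there. Khrushchev's idea, which I intend to follow, is therefore to abandon the pointwise bound and settle for an averaged (Hardy-space) estimate of $Gp_n$; it is exactly at this point that the Poisson estimate \eqref{E:PoissonEstimate0} tied to the gauge $h$, and therefore the regularity (R3), enters decisively, and the bulk of the technical work lies in turning that estimate into uniform $\hil^s$ membership of the sequence $\{Gp_n\}$.
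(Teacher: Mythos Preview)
Your plan has a genuine gap at the very step you flag as the heart of the matter. With $|G|=1$ $\m$-a.e.\ on $E$, the restriction of $Gp_n$ to $E$ is just $p_n|E$, and the only control the hypotheses give on $p_n|E$ is the weighted bound $\sup_n\int_E|p_n|^t w\,d\m<\infty$. Since $E\in\assoch{h}{w}$ allows $w$ to be arbitrarily close to $0$ on large parts of $E$ (subject only to $\int_E\log w\,d\m>-\infty$), no unweighted $\mathcal{L}^s(E,d\m)$ bound on $p_n$ follows for any $s>0$. Consequently the claimed uniform $\hil^s$ bound on $Gp_n$ fails already at the level of boundary values on $E$, and with it the uniform integrability you invoke for Vitali's theorem. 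The same obstruction blocks even a Nevanlinna-class bound on $Gp_n$ in $\D$: for $z$ approaching a Lebesgue point of $E$ one has $|G(z)|\to 1$, so $\log^+|G(z)p_n(z)|$ can be as large as $a\,h(1-|z|)/(1-|z|)$ on a set of $\zeta$ of measure $\approx|E|>0$, and the circular means blow up as $|z|\to 1$.

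The paper's proof avoids this by two moves you are missing. First, the multiplier on $E$ must absorb the weight: one takes an outer $W_E$ with $|W_E|=\min(1,w^{1/t})$ on $E$, so that $\int_E|W_E p_n|^t\,d\m\le\int_E|p_n|^t w\,d\m$. Second, and more essentially, one does not try to get Hardy bounds in $\D$ at all; instead one passes to the Khrushchev subdomain $\Omega=\D\setminus\bigcup_k B(\ell_k)$ (boxes of height $|\ell_k|$ over the complementary arcs), where the growth bound (i) becomes $\log^+|p_n|\le a\,h(1-|z|)/(1-|z|)$ on $\partial\Omega\cap\D$ and condition (R3) enters precisely to make this integrable against harmonic measure: $\int_{\partial\Omega\cap\D}\frac{h(1-|z|)}{1-|z|}\,d\omega<\infty$. (Your attribution of (R3) to the Poisson estimate \eqref{E:PoissonEstimate0} is off; that estimate holds for any gauge.) Pulling back by a conformal map $\phi:\D\to\Omega$ yields a bounded sequence $(W_E\circ\phi)(p_n\circ\phi)$ in the Nevanlinna class, and the conclusion comes from the Khinchin--Ostrowski theorem rather than from weak $\hil^s$ compactness.
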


\begin{proof}
The only technical part of the proof is the estimation of harmonic measure of a certain domain, which Khrushchev accomplishes in \cite[Section 3]{khrushchev1978problem}. He defines the domain $\Omega = \D \setminus \big(\bigcup_n B(\ell_n) \big)$, where $B(\ell_n) = \{ z \in \D : z/|z| \in \ell_n, |z| > 1-|\ell_n| \}$ is a curvilinear box, and $\ell_n$ is a maximal open interval complementary to $E$ in $\T$. It is not hard to verify that $\partial \Omega$ is a rectifiable Jordan curve. 

We denote by $\omega$ the harmonic measure in $\Omega$ at $0 \in \Omega$. In \cite[Proof of Theorem 3.1]{khrushchev1978problem}, Khrushchev shows that under condition (R3) on $h$, we have
\begin{equation}
    \label{E:KhrushchevEstimate}
    \int_{\partial \Omega \cap \D} \frac{h(1-|z|)}{1-|z|} d\omega(z) < \infty. 
\end{equation}

We use this result as a given, and proceed with the rest of the proof. Introduce the outer function $W_E$ in $\D$ with the following modulus $|W_E|$ on $\T$:
\begin{align}
    \label{E:WEMultDef}
    |W_E(\zeta)| = \begin{cases} 
    1, & \zeta \in \T \setminus E, \\ 
    \min \big(1 , (w(\zeta))^{1/t}\big), & \zeta \in E.    
    \end{cases}
\end{align}
Such a function $W_E$ exists as a consequence of $E \in \assoc{w}$, since it implies that for $|W_E|$ in \eqref{E:WEMultDef} we have $\log |W_E| \in \mathcal{L}^1(d\m)$. Note that $|W_E(z)| \leq 1$ for $z \in \D$, since $|W_E| \leq 1$ on $\T$.

Let $z \in \partial \Omega \cap \D$. Then by $(i)$ we have
\[ \log^+ (|W_E(z) \cdot p_n(z)|) \leq a \frac{h(1-|z|)}{1-|z|}, \quad z \in \D.\] By convergence of $\{p_n\}_n$ in $\mathcal{L}^t(w_E \, d\m)$, and by the inequality $|W_E|^t \leq w$ which holds $\m$-almost everywhere on $E$, we also have
\begin{equation}
    \label{E:PnTEstimate}
    \sup_n \int_E |W_E \cdot p_n|^t \,d\m \leq \sup_n \int_E |p_n|^t w \, d\m < \infty.
\end{equation} Khrushchev in his proof observes that $d\omega$ restricted to $E$ is absolutely continuous with respect to $d\m$, and that $d\omega \leq d\m$ on $E$. This follows by monotonicity of the harmonic measure (see \cite[Corollary 4.3.9]{ransford1995potential}) applied to the inclusion $\Omega \subset \D$. This observation, together with hypothesis $(i)$, and the estimates \eqref{E:KhrushchevEstimate} and \eqref{E:PnTEstimate}, implies that 
\[ \sup_n \int_{\partial \Omega} \log^+ (|W_E \cdot p_n|) \, d\omega < \infty. \] If $\phi: \D \to \Omega$ is a conformal map fixing the origin, then a change of variables and the conformal invariance of harmonic measure, namely $d\m = d(\omega \circ \phi)$, shows that 
\[ \sup_n \int_\T \log^+( |W_E \circ \phi) \cdot (p_n \circ \phi) |) \, d\m < \infty.\] 
The rectifiability of $\partial \Omega$ implies that $\phi$ is conformal at $\m$-almost every point on $\T$ (see \cite[VI.1]{garnett}) which implies that $|(W_E \circ \phi) \cdot (p_n \circ \phi) |$ coincides $\m$-almost everywhere with the non-tangential boundary values of the bounded functions $(W_E \circ \phi) \cdot (p_n \circ \phi)$ defined in $\D$. Hence the finiteness of the supremum above shows that $\{(W_E \circ \phi) \cdot (p_n \circ \phi)\}_n$ is a bounded subset of the Nevanlinna class in $\D$. Since after passing to a subsequence we may assume that $p_n \to f$ almost everywhere on $E$, we may also assume that $(W_E \circ \phi) \cdot (p_n \circ \phi) \to (W_E \circ \phi) \cdot (f \circ \phi)$ almost everywhere on $\widetilde{E} = \phi^{-1}(E)$. We have $|\widetilde{E}| > 0$ since $|\widetilde{E}| = \omega(E) > 0$ as a consequence of $\omega$ and arclength measure on $\partial \Omega$ being mutually absolutely continous (see \cite[VI.1.2]{garnett2005harmonic}). We are now in the setting of the classical Khinchin-Ostrowski theorem \cite[Part II, 2.3]{havinbook}). This theorem implies that $(W_E \circ \phi) \cdot (f \circ \phi) \equiv 0$ as a consequence of $(W_E \circ \phi) \cdot (p_n \circ \phi) \to 0$ pointwise on $\D$, which follows by our hypothesis $(ii)$. Hence $f \equiv 0$, since $|W_E| > 0$ $\m$-almost everywhere on $E$.
\end{proof}

To apply \thref{L:KhrushchevLemma} in the setting of $\Po^t(\mu)$ spaces, we need a bound of the form
\begin{equation} \label{E:pGrowthBound} |p(z)| \leq C \exp\Big( a \frac{h(1-|z|)}{1-|z|)} \Big) \end{equation} where $p$ is a polynomial and $C > 0$ is a constant depending only on the norm of $p$ in $\Po^t(\mu)$. If the bound exists then any sequence $\{p_n\}_n$ of polynomials in $\Po^t(\mu)$ converging to an element $f \in \Po^t(\mu)$ with $f|\D \equiv 0$ satisfies the assumptions of \thref{L:KhrushchevLemma} for every $E \in \assoch{h}{w}$, $|E| > 0$. The conclusion is that $f|E \equiv 0$, and since a countable union of sets in $\assoch{h}{w}$ constitutes a carrier for $w_c$, we deduce that $f|\T \equiv 0$ as an element of $\mathcal{L}^t(w_c \,  d\m)$. Thus $f \equiv 0$ in $\Po^t(\mu)$ if $w$ is residual-free, in the sense that $w = w_c$. This establishes irreducibility of $\Po^t(G_{ah}dA + w_c\, d\m)$ for any $h$ admitting a bound of the form \eqref{E:pGrowthBound}. 

In the case that $h$ has the form \eqref{E:CarlesonGaugeDef}, the bound \eqref{E:pGrowthBound} reduces to \[|p(z)| \leq C(1-|z|)^{-a}\] for some $a > 0$. A well-known estimate in Bergman spaces states that
\[ |p(z)| \lesssim  \Big(\int_{\D} |p_n|^t dA_\alpha \Big)^{1/t} \cdot (1-|z|)^{-a}, \quad z \in \D\] for some $a = a(\alpha, t) > 0$ (see, for instance, \cite[Theorem 1 of Chapter 3]{durenbergmanspaces}). Thus for $\mu$ of the form \eqref{E:MuDef}, a convergent sequence of polynomials $\{p_n\}_n$ in $\Po^t(\mu)$ satisfies assumption $(i)$ in \thref{L:KhrushchevLemma}, and our discussion in the previous paragraph gives us the irreducibility of the piece $\Po^t(dA_\alpha + w_c\, d\m)$ in the decomposition in \thref{T:ThomsonDecompTheorem}. This finishes the proof of that result.

Although we shall not explicitly verify the conditions, we mention that the regularity conditions (R1), (R2) and (R3), and the estimate \eqref{E:pGrowthBound}, hold for the particular choice $h(x) = x^{\beta}$ for any $\beta \in (0,1)$. As a consequence, the following variant of \thref{T:ThomsonDecompTheorem} holds.

\begin{thm*}
    Let $h(x) = x^\beta$ for some $\beta  \in (0, 1)$, and let $\mu$ have the form \eqref{E:MuGeneralizedDef} for some $a > 0$.
    For any $t \in (0,\infty)$, we have
\[
    \Po^t(\mu) = \Po^t( G_{a h}dA + w_c \, d\m) \oplus \mathcal{L}^t(w_r \, d\m) \] where  \[ w_c = w|\coreh{h}{w}, \quad w_r = w|\resh{h}{w}\] and $\Po^t( G_{a h}dA + w_c \, d\m)$ is irreducible.
\end{thm*}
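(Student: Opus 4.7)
The plan is to reduce this statement to the general framework already built up in Section~\ref{S:ThomsonDecompProofSec}. Once one checks that $h(x) = x^\beta$ satisfies the three regularity axioms (R1), (R2), (R3), and that the polynomial pointwise-growth bound \eqref{E:pGrowthBound} holds in this setting, both the direct-sum decomposition and the irreducibility follow from the already-established machinery without modification. There are three items to verify.

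First, verify that $h(x) = x^\beta$ is an admissible gauge. Axioms (R1) and (R2) are immediate since $h(x)/x = x^{\beta-1}$ is strictly decreasing on $(0,1]$ and tends to $\infty$ as $x \to 0^+$ precisely because $\beta < 1$. For (R3), an elementary integration gives
\[
\int_0^a x^\beta \, dx = \frac{a^{\beta+1}}{\beta+1} = \frac{a \cdot h(a)}{\beta+1} \leq \frac{h(a)}{\beta+1}, \quad a \in (0,1),
\]
so $C(h) = 1/(\beta+1)$ works. With (R1) and (R2) secured, the construction in Section~\ref{S:ConstrSubsecSplitting} applies verbatim: the functions $F_N$ from \thref{L:ConstructionLemma} are uniformly bounded in $\mathcal{L}^{t^*}(G_{ah}\,dA + w\,d\m)$ for any $t^* > \max(t,1)$, and extracting a weak-limit produces the function $F$ needed for \thref{L:ThompsonDecompReductionLemma}. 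This establishes the direct-sum decomposition for every $t \in (0,\infty)$.

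Second, establish the polynomial growth bound \eqref{E:pGrowthBound}. Fix $z_0 \in \D$ and set $r_0 = 1-|z_0|$. On the disk $D(z_0, r_0/2)$ we have $1 - |z| \geq r_0/2$, so
\[
G_{ah}(z) \geq \exp\!\bigl(-a \cdot 2^{1-\beta} r_0^{\beta-1}\bigr), \quad z \in D(z_0,r_0/2).
\]
Since $|p|^t$ is subharmonic, the submean-value inequality combined with the above lower bound on the weight gives
\[
|p(z_0)|^t \leq \frac{C}{r_0^2} \int_{D(z_0,r_0/2)} |p|^t \, dA \leq \frac{C \exp\!\bigl(a \cdot 2^{1-\beta} r_0^{\beta-1}\bigr)}{r_0^2} \|p\|^t_{\Po^t(\mu)}.
\]
The polynomial correction factor $r_0^{-2/t}$ is absorbed into the exponential at the cost of replacing the exponent constant by any slightly larger one. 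Taking $t$-th roots yields $|p(z_0)| \leq C' \exp\!\bigl(a'(1-|z_0|)^{\beta-1}\bigr)\|p\|_{\Po^t(\mu)}$ for some $a' > 0$, which is precisely the form $C'\exp\!\bigl(a' h(1-|z_0|)/(1-|z_0|)\bigr)$ required by \eqref{E:pGrowthBound}.

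Third, conclude irreducibility exactly as at the end of Section~\ref{S:IrreducibilityProofSec}. If $f \in \Po^t(G_{ah}\,dA + w_c\,d\m)$ has $f|\D \equiv 0$, pick polynomials $p_n \to f$ in the norm, pass to a subsequence converging pointwise to $0$ in $\D$, and apply \thref{L:KhrushchevLemma} on each $E \in \assoch{h}{w}$ with $|E| > 0$; the hypotheses of that lemma are exactly what (R3) and the growth bound furnish. One concludes $f|E \equiv 0$ for every such $E$, and since a countable union of such sets is a carrier for $w_c$, we get $f \equiv 0$ as an element of $\mathcal{L}^t(w_c\,d\m)$. The only genuinely new computation is the growth bound above; everything else is a reference to the general framework. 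There is no serious obstacle since the hard Khrushchev harmonic-measure estimate \eqref{E:KhrushchevEstimate} is already absorbed into \thref{L:KhrushchevLemma} and holds for any gauge satisfying (R3).
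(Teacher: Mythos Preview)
Your proposal is correct and follows exactly the route the paper indicates: the paper explicitly states (without carrying out the verifications) that (R1), (R2), (R3) and the growth estimate \eqref{E:pGrowthBound} hold for $h(x)=x^\beta$, and that the theorem then follows from the general framework of Section~\ref{S:ThomsonDecompProofSec}. You have supplied precisely those verifications, so your argument is in fact more detailed than the paper's own treatment.
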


\section{Permanence of singular measures living on associated sets}

\label{S:PermanenceSubsec}

\subsection{The goal} The result to be proved in this section applies strictly to measures $\mu$ of the form \eqref{E:MuDef}, and not to the class \eqref{E:MuGeneralizedDef}. We shall not go into further details regarding the generality in which the argument given below applies, but we wish to mention that the choice $h(x) = x^\beta$, $\beta \in (0,1)$, leads to some issues in the proof.

\begin{prop} [\textbf{Permanence of singular inner factors}]\thlabel{P:PermanenceInnerAssocSet}
Let $\mu$ be of the form \eqref{E:MuDef}. If $t ´\in (0, \infty)$, $\Po^t(\mu)$ is a space of analytic functions and $S_{\nu}$ is a singular inner function for which the corresponding singular measure $\nu$ is supported on a set $E \in \assoc{w}$, then any function $h \in [S_\nu] \cap \hil^\infty$ satisfies $h/S_\nu \in \hil^\infty$. 
\end{prop}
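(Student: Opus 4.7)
The plan is to follow Berman's approach in \cite{berman1984cyclic}, combined with the Khrushchev machinery developed in the proof of \thref{L:KhrushchevLemma}, to adapt the Hilbert space argument of \cite[Corollary 6.5]{limani2024constructions} to general $t \in (0,\infty)$. I begin by taking polynomials $\{p_n\}_n$ with $S_\nu p_n \to f$ in $\Po^t(\mu)$, where $f \in \hil^\infty$ has norm $M$. Since $\Po^t(\mu)$ is continuously included in the Bergman space $\Po^t(dA_\alpha)$, the convergence gives $S_\nu p_n \to f$ uniformly on compact subsets of $\D$; the non-vanishing of $S_\nu$ then yields locally uniform convergence $p_n \to g := f/S_\nu$ with $g$ holomorphic on $\D$. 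The aim is to show $g \in \hil^\infty$ with $\|g\|_\infty \leq M$.

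Next I would examine the boundary behavior. Since $\nu \perp \m$, the boundary values of $S_\nu$ have modulus $1$ $\m$-a.e.\ on $\T$, and $g$ belongs to the Nevanlinna class $N$ (as follows directly from $g = f/S_\nu$ together with the Poisson representation of $\log|1/S_\nu|$), so its non-tangential limits $g^*$ exist $\m$-a.e.\ and satisfy $|g^*| = |f^*| \leq M$ $\m$-a.e. By standard factorization theory, to conclude $g \in \hil^\infty$ it suffices to show $g$ belongs to the Smirnov class $N^+$. This is equivalent to the domination $\nu_f \geq \nu$, where $\nu_f$ is the singular measure of the inner factor of $f$, and it constitutes the core of the proposition.

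To establish this domination, I exploit $E \in \assoc{w}$ by constructing an outer function $W_E$ in $\D$ with $|W_E|^t = \min(w,1)$ on $E$ and $|W_E| = 1$ on $\T \setminus E$; the integrability $\int_E \log w \, d\m > -\infty$ ensures $\log|W_E| \in \mathcal{L}^1(d\m)$, so such a bounded outer $W_E$ exists. Using $|S_\nu| = 1$ $\m$-a.e.\ on $\T$, the products $W_E p_n \in \hil^\infty$ satisfy the uniform bound
\[ \int_E |W_E p_n|^t \, d\m \leq \int_E w |p_n|^t \, d\m \leq \int_\T w |S_\nu p_n|^t \, d\m \leq C, \]
and $W_E p_n \to W_E g$ pointwise on $\D$ along a subsequence. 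I then work on the Khrushchev domain $\Omega = \D \setminus \bigcup_k B(\ell_k)$ attached to $E$: the harmonic-measure estimate \eqref{E:KhrushchevEstimate} together with the Khinchin--Ostrowski theorem, applied as in the proof of \thref{L:KhrushchevLemma} after transplanting through a conformal map $\phi:\D \to \Omega$, should identify the boundary behavior of $W_E g$ on $E$ and force $W_E g \in N^+$. Since $W_E$ is outer, also $1/W_E \in N^+$, and hence $g = (W_E g)/W_E \in N^+$, completing the argument.

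The main obstacle is this last Khrushchev--Khinchin--Ostrowski step. The Hilbert case $t = 2$ of \cite{limani2024constructions} admits a clean reproducing-kernel proof unavailable for $t \neq 2$, so for general $t$ one must rely on Berman's subharmonic-function estimates from \cite{berman1984cyclic} together with careful control on the growth of $|p_n|$ inside $\D$. That growth is dictated by the ratio $|S_\nu p_n|/|S_\nu|$, which can be very large near $\supp{\nu}$; the technical heart of the proof is to show that the boundary $\mathcal{L}^t$-bound on $E$ and the pointwise convergence on $\D$ nonetheless suffice to pin down the inner-factor structure of $g$ on $E$ and yield the desired domination $\nu_f \geq \nu$.
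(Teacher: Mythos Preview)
Your proposal has a genuine gap at exactly the place you flag as ``the main obstacle,'' and the route you sketch there cannot close it. The Khinchin--Ostrowski theorem is a \emph{uniqueness} result: it tells you that a Nevanlinna-bounded sequence converging to zero in $\D$ must have boundary limit zero on any set of positive measure where the boundary values converge. It says nothing about Smirnov class membership or inner-factor divisibility, so it cannot by itself force $W_E g \in N^+$ or $\nu_f \geq \nu$. Your bound $\int_E |W_E p_n|^t \, d\m \leq C$ controls only the part of the boundary lying on $E$; on the portion of $\partial\Omega$ inside $\D$ you have no control whatsoever over $|p_n|$, since $|p_n| = |S_\nu p_n|/|S_\nu|$ and $|S_\nu|$ collapses near $\supp \nu \subset E$.

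The paper resolves this with three ingredients you are missing. First, the domain is \emph{not} the Khrushchev domain of \thref{L:KhrushchevLemma} (boxes of height $|\ell_n|$) but a smooth domain whose boundary has quadratic tangency $1-|z| \simeq \dist{z/|z|}{E}^2$; this smoothness gives, via Kellogg's theorem, a conformal map $\phi:\D\to\Omega$ with $\phi'$ continuous and nonvanishing on $\T$, and it guarantees that $S_\nu$ is bounded away from zero on $\D\setminus\Omega$, so $h/S_\nu$ is trivially bounded there. Second, one introduces a further outer function $F_E$ with $|F_E(\zeta)| = \dist{\zeta}{E}$ on $\T\setminus E$, which decays like a power of $1-|z|$ along $\partial\Omega\cap\D$; this kills the Bergman-space growth $|S_\nu p_n(z)| \lesssim (1-|z|)^{-B}$ on that part of the boundary, so that $F_E W_E S_\nu p_n$ is uniformly bounded on all of $\partial\Omega$ in the right sense. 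Third --- and this is the mechanism that actually transfers the inner factor --- one uses the result from \cite{berman1984cyclic} that $S_\nu \circ \phi = \theta \cdot U$ with $\theta$ a nontrivial singular inner function and $U$ outer, bounded above and below. Then the sequence $(F_E W_E S_\nu p_n)\circ\phi$ is bounded in $\hil^t$, ordinary Hardy-space permanence forces $\theta$ to divide $(F_E W_E h)\circ\phi$, and since $F_E\circ\phi$ and $W_E\circ\phi$ are outer, $\theta$ divides $h\circ\phi$, which is exactly the boundedness of $h/S_\nu$ in $\Omega$.
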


The result is a type of \textit{indestructibility} property (the term was coined in \cite{hedenmalmbergmanspaces}) of $S_\nu$ under convergence in $\Po^t(\mu)$ norms. In relation to \thref{T:CyclicityMainTheorem}, it shows the necessity of vanishing of $\nu$ on associated sets for cyclicity of $S_\nu$. 

As remarked in the Introduction, the case $t=2$ of \thref{P:PermanenceInnerAssocSet} has been established in \cite{limani2024constructions}. The proof given there essentially involves a functional analytic argument. The proof of the proposition in the general case, which we will present here, involves a simple extension to associated sets of an argument in \cite{berman1984cyclic} by Berman, Brown and Cohn which applies to $\BC$-sets of Lebesgue measure zero. Before explaining their work, let us state two lemmas which we will use at a later stage in the proof. The first of them is the permanence property in $\hil^t$.

\begin{lem}
\thlabel{L:HpPerm}
Assume that $\{f_n\}_n$ is a sequence of functions in $\hil^t = \Po^t(d\m)$, satisfying \[ \sup_n \int_\T |f_n|^t \, d\m < \infty.\] Let $f \in \hil^t$, and assume that $S_\nu f_n \to f$ pointwise in $\D$. Then $f/S_\nu \in \hil^t$.
\end{lem}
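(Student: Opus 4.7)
The plan is to work entirely inside the open disk $\D$, where the singular inner function $S_\nu$ is zero-free. This avoids the difficulty that the candidate quotient $f/S_\nu$ does not immediately make sense as an element of $\mathcal{L}^t(d\m)$, since the zero set of the boundary values of $S_\nu$ may carry the entire mass of $\nu$.

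First I would produce an analytic candidate by pointwise division in the disk. Since $S_\nu(z) \neq 0$ for every $z \in \D$, the hypothesis $S_\nu(z) f_n(z) \to f(z)$ on $\D$ forces $f_n(z) \to g(z)$ pointwise in $\D$, where $g := f/S_\nu$. As a quotient of analytic functions with a zero-free denominator on $\D$, the function $g$ is analytic there. Next I would transfer the uniform $\hil^t$-norm bound from $\{f_n\}$ to $g$ via Fatou's lemma at each fixed radius. For any $r \in (0,1)$, subharmonicity of $|f_n|^t$ gives
\[
\int_\T |f_n(r\zeta)|^t \, d\m(\zeta) \leq \|f_n\|_{\hil^t}^t \leq M,
\]
with $M$ independent of $n$ and $r$. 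Applying Fatou's lemma to the pointwise convergent sequence $|f_n(r \cdot)|^t$ on $\T$ then yields
\[
\int_\T |g(r\zeta)|^t \, d\m(\zeta) \leq \liminf_n \int_\T |f_n(r\zeta)|^t \, d\m(\zeta) \leq M.
\]
Taking the supremum over $r \in (0,1)$ concludes $g = f/S_\nu \in \hil^t$.

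There is no serious obstacle; the statement is essentially a normal-family argument combined with Fatou's lemma. The only point worth emphasizing is that the convergence hypothesis is formulated inside $\D$, where $S_\nu$ is non-vanishing, and this is precisely what legitimizes the pointwise inversion of $S_\nu$ that drives the rest of the proof. The argument is valid for every $t \in (0,\infty)$, as no use is made of duality or reflexivity of $\hil^t$.
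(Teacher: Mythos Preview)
Your argument is correct and, in fact, cleaner than the one sketched in the paper. The paper first handles $t=2$ by extracting a weakly convergent subsequence of $\{S_\nu f_n\}_n$ and invoking that $S_\nu \hil^2$ is a (weakly) closed subspace, and then reduces the case $t\neq 2$ to $t=2$ by multiplying by suitable outer factors, referring to an external source for details. Your approach bypasses both the weak-compactness step and the reduction: by dividing pointwise in $\D$ and applying Fatou's lemma on circles of radius $r<1$, you obtain the $\hil^t$ bound for $f/S_\nu$ directly and uniformly in $t\in(0,\infty)$. The only mild imprecision is the phrase ``subharmonicity of $|f_n|^t$'': this is fine for all $t>0$ since $\log|f_n|$ is subharmonic and $x\mapsto e^{tx}$ is convex increasing, but it may be worth saying so explicitly when $t<1$.
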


The claim is certainly true for $t=2$. In that case, we may extract a weakly convergent subsequence of $\{S_\nu f_n\}_n$, which must converge to a function in the closed subspace $\S_{\nu} \hil^2$. Then $f \in S_\nu\hil^2$, so that $f/S_\nu \in \hil^2$. For $t \neq 2$, we may reduce to the previous case by multiplying the sequence $\{f_n\}_n$ by appropriate outer factors. For details, we refer to \cite[proof of Theorem 1.4]{limani2021abstract}.

\begin{lem} \thlabel{L:OuterComp} Let $F$ be a bounded outer function. If $\phi:\D \to \D$ is analytic, then $F \circ \phi$ is outer.
\end{lem}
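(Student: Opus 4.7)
The plan is to characterize outerness by the identity $\log |G(0)| = \int_\T \log |G^*| \, d\m$, equivalently that $\log |G|$ is the Poisson integral of its boundary values $\log |G^*| \in \mathcal{L}^1(d\m)$. Since $F$ is outer and bounded, it is zero-free in $\D$, and so is $F \circ \phi$. Setting $c = \log \|F\|_\infty$, the function $v(z) := c - \log |F(z)|$ is \emph{positive} harmonic on $\D$ with absolutely continuous Herglotz representation $v = P[v^*]$, where $v^* = c - \log |F^*| \in \mathcal{L}^1(d\m)$ is non-negative a.e. The composition $v \circ \phi$ is again positive harmonic, so by Herglotz it admits a unique representation $v \circ \phi = P[\mu]$ for some positive finite Borel measure $\mu$ on $\T$. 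The goal reduces to showing that $\mu$ is absolutely continuous, since then $\log |F \circ \phi|$ is the Poisson integral of its own boundary values, which gives outerness of $F \circ \phi$.

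To prove $\mu$ is absolutely continuous, I would use a truncation combined with monotone convergence. Set $v_n^* := \min(v^*, n) \in \mathcal{L}^\infty(d\m)$ and $v_n := P[v_n^*]$; then $v_n$ is bounded harmonic on $\D$, $v_n \uparrow v$ pointwise on $\D$, and $v_n^* \uparrow v^*$ a.e.\ on $\T$. Since $v_n \circ \phi$ is a bounded harmonic function on $\D$, the classical mean value property gives
\[ v_n(\phi(0)) = \int_\T (v_n \circ \phi)^*(\zeta) \, d\m(\zeta). \]
On the set $E_0 := \{\zeta \in \T : |\phi^*(\zeta)| < 1\}$ one has $(v_n \circ \phi)^*(\zeta) = v_n(\phi^*(\zeta))$ by continuity of $v_n$ at interior points, while on $E_1 := \{\zeta \in \T : |\phi^*(\zeta)| = 1\}$ one has $(v_n \circ \phi)^*(\zeta) = v_n^*(\phi^*(\zeta))$ for a.e.\ $\zeta$. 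Letting $n \to \infty$ and applying monotone convergence on both sides yields
\[ v(\phi(0)) = \int_{E_0} v(\phi^*) \, d\m + \int_{E_1} v^*(\phi^*) \, d\m. \]
The right-hand side is precisely $\int_\T (v \circ \phi)^*(\zeta) \, d\m(\zeta)$, by the same identification of boundary values now applied to the positive harmonic function $v \circ \phi$. Combined with $v(\phi(0)) = \mu(\T)$ from Herglotz and $(v \circ \phi)^* = d\mu_{ac}/d\m$ a.e. from Fatou, this forces $\mu(\T) = \mu_{ac}(\T)$, so $\mu$ is absolutely continuous, as required.

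The main technical subtlety is the identification of the nontangential boundary values of $u \circ \phi$ on the set $E_1$ with $u^*(\phi^*(\zeta))$ in the case of a bounded harmonic $u$. This requires that at almost every $\zeta \in E_1$, the map $\phi$ approaches its boundary value $\phi^*(\zeta) \in \T$ nontangentially from inside $\D$, so that the nontangential limits of $u$ at $\phi^*(\zeta)$ may be lifted back along $\phi$. This is a classical consequence of Plessner's theorem on angular limits of analytic self-maps of $\D$, and once carefully invoked the rest of the argument is just monotone convergence and Herglotz/Fatou bookkeeping.
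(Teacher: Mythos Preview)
Your argument is essentially correct and follows a classical route, but it is quite different from the paper's proof. The paper uses a one-line characterization: a bounded analytic $F$ is outer if and only if there exist $h_n \in \hil^\infty$ with $h_n F \to 1$ pointwise in $\D$ and $\sup_n \|h_n F\|_\infty < \infty$. Given such $h_n$, the functions $h_n \circ \phi \in \hil^\infty$ witness the same property for $F \circ \phi$, and the lemma follows immediately. This avoids boundary values, Herglotz representations, and any discussion of the set $E_1$ altogether.

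Your Herglotz approach is more hands-on and also works, but the step you flag as the ``main technical subtlety'' is genuinely delicate as you wrote it: identifying $(v_n \circ \phi)^*$ with $v_n^* \circ \phi^*$ on $E_1$ requires knowing that $\phi(r\zeta)$ approaches $\phi^*(\zeta)$ nontangentially for a.e.\ $\zeta \in E_1$, and Plessner's theorem does not give this directly (Lindel\"of's theorem would help for bounded \emph{analytic} functions, but $v_n$ is only bounded harmonic). Fortunately you do not actually need this identification. From $v_n \leq v$ on $\D$ one gets $(v_n \circ \phi)^* \leq (v \circ \phi)^*$ a.e.\ on $\T$, hence
\[
v_n(\phi(0)) = \int_\T (v_n \circ \phi)^* \, d\m \ \leq\ \int_\T (v \circ \phi)^* \, d\m \ = \ \mu_{ac}(\T).
\]
Letting $n \to \infty$ on the left gives $\mu(\T) = v(\phi(0)) \leq \mu_{ac}(\T) \leq \mu(\T)$, so $\mu$ is absolutely continuous. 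This sidesteps the $E_1$ issue entirely and makes your proof self-contained.
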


\begin{proof}
    A bounded analytic function $F$ in $\D$ is outer if and only if a sequence $\{h_n\}_n$ of functions $h_n \in \hil^\infty$ exists for which we have the convergence $h_n  F\to 1$ pointwise in $\D$ and a uniform bound $\sup_{z \in \D} |F(z)h_n(z)| < C$ independent of $n$. If $\phi$ is as in the lemma, then for $h^*_n := h_n \circ \phi \in \hil^\infty$ we have that $h_n^* \cdot (F \circ \phi)$ converges pointwise to $1$ in $\D$  and satisfies the mentioned bound. So $F \circ \phi$ is outer.
\end{proof}

\subsection{A subdomain of the disk} \label{S:SubdomainSubsec} In \cite{berman1984cyclic}, the authors use a subdomain $\Omega$ of $\D$ with smooth boundary $\partial \Omega$ which satisfies $\partial \Omega \cap \T = E$. If $\{\ell_n\}_n$ is the sequence of maximal open intervals complementary to $E$ on $\T$, and $a_n, b_n$ are the endpoints of the interval $\ell_n$, then they define the curve $\gamma \subset \D \cup \T$ by
\[ \gamma = \big\{ (1-r(t))e^{it} : t \in [0, 2\pi) \big\}\] where $r(t)$ is chosen so that $r(t) \simeq c (\dist{e^{it}}{E})^2$. The constant $c > 0$ is soon to be specified. More precisely, we will set $r(t) = 0$ if $e^{it} \in E$ and otherwise
\begin{equation*}
    r(t) = c\frac{|a_n-e^{it}|^2|b_n - e^{it}|^2}{|\ell_n|^2}, \quad e^{it} \in \ell_n.
\end{equation*}

The domain $\Omega$ is defined to be the interior of the curve $\gamma$. The choice of the function $r$ makes $\partial \Omega = \gamma$ smooth enough to apply Kellogg's theorem, \cite[Theorem II.4.3]{garnett2005harmonic}, which asserts that any conformal mapping $\phi: \D \to \Omega$ has a derivative $\phi'$ which extends to a continuous non-vanishing function on $\T$. We will fix $\phi$ which satisfies $\phi(0) = 0$.  

An important property of $\Omega$ is that $S_\nu$ is bounded from below on $\D \setminus \Omega$. To see this, note first that since $\nu$ is supported on $E$, for any $z \in \D \setminus \Omega$ and $z^* := z/|z| \in \T \setminus E$, we have that $S_\nu$ is holomorphic in a neighbourhood of $z^*$, and $|S_\nu(z^*)| = 1$. Secondly, the explicit formula \eqref{E:SnuEq} and the chain rule imply that for $z \in \D$ we have
\begin{align*}
|S'_\nu(z)| &= \Big| \int_E \frac{2\zeta}{\zeta - z} d\nu(\zeta) \Big| |S_\nu(z)| \\
&\leq 2\nu(\T)\cdot \dist{z}{E}^{-2}.
\end{align*} 
Thirdly, if $z \not\in \Omega$, then the distance between $z$ and $z^*$ is dominated by a constant multiple of $c ´(\dist{z^*}{E})^2$. Finally, consider any $w \in \D \setminus \Omega$ and let $L$ be the straight line segment between $w$ and $w^* = w/|w| \in \T$. For any $z \in L$, we have also that $\dist{z}{E}$ dominates a constant multiple of $\dist{z^*}{E} = \dist{w^*}{E}$. Using in combination our above observations, we obtain that
\begin{align*}
    1 - |S_\nu(w)| &\leq |S_\nu(w^*) - S_\nu(w)| \\ &= \Big| \int_L S'_\nu(z) \, dz \Big| \\ &\lesssim \int_L (\dist{z}{E})^{-2} |dz| \\
&\lesssim |w - w^*| (\dist{w^*}{E})^{-2} \\ &\lesssim c.
\end{align*} It follows that we may choose $c > 0$ small enough to ensure that \begin{equation}
    \label{E:SnuLowerEst}
    |S_\nu(z)| > 1/2, \quad z \in \D \setminus \Omega.
\end{equation}

\subsection{Singular factor of $S_\nu \circ \phi$} The non-vanishing condition on $\phi'|\T$ allows the authors in \cite{berman1984cyclic} to conclude that the function $S_\nu \circ \phi$ has a non-trivial singular inner factor.

\begin{lem} \thlabel{L:SingCompInnerFactor} The composed function $S_\nu \circ \phi : \D \to \D$ has the inner-outer factorization\[ S_\nu \circ \phi = \theta \cdot U,\] where $\theta$ is a non-trivial singular inner function, and $U$ is an outer function bounded from above and below in $\D$.
\end{lem}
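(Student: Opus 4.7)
The plan is to prove \thref{L:SingCompInnerFactor} by (i) writing down the inner–outer factorization of $f := S_\nu \circ \phi$, (ii) establishing a uniform \aev{\m}\ lower bound for $|f|$ on $\T$ which transfers to $|U|$ on $\D$, and (iii) comparing $|f(0)|$ with $|U(0)|$ to force a non-trivial singular inner factor. Since $\|S_\nu\|_\infty \leq 1$ and $\phi(\D) = \Omega \subset \D$, the composition $f$ is in $\hil^\infty$; since $S_\nu$ is zero-free in $\D$, so is $f$, and therefore $f = \theta U$ with $\theta$ a singular inner function (a priori possibly a unimodular constant) and $U$ outer.

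For the boundary estimate, I would split the analysis according to the location of $\phi(\zeta)$ on $\partial\Omega = \gamma' \sqcup E$. Kellogg's theorem ensures that $\phi|_\T$ is a bi-Lipschitz homeomorphism, so it preserves $\m$-null sets; the same smoothness maps non-tangential cones at $\zeta$ in $\D$ to approach regions at $\phi(\zeta)$ in $\Omega$ which are non-tangential with respect to $\partial\Omega$. If $\phi(\zeta) \in \gamma'$, continuity of $S_\nu$ on $\D$ gives $f(\zeta) = S_\nu(\phi(\zeta))$, and \eqref{E:SnuLowerEst} yields $|f(\zeta)| \geq 1 - \delta(c)$, where $\delta(c) \to 0$ as $c \to 0$. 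If $\phi(\zeta) \in E \setminus \supp{\nu}$, then $S_\nu$ extends analytically across $\T$ in a neighborhood of $\phi(\zeta)$ with $|S_\nu| \equiv 1$ on the boundary, so the non-tangential limit of $|f|$ at $\zeta$ equals $1$. The remaining set, where $\phi(\zeta) \in \supp{\nu}$, has Lebesgue measure zero, since $\nu \perp \m$ gives $|\supp{\nu}| = 0$. Combining the cases, $|f| \geq 1 - \delta(c)$ \aev{\m}\ on $\T$, and the Poisson representation of the outer function delivers $|U(z)| \in [1 - \delta(c), 1]$ for every $z \in \D$, giving the desired two-sided bound on $U$.

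For the non-triviality of $\theta$, evaluate at $z = 0$: since $\phi(0) = 0$, we have $|f(0)| = |S_\nu(0)| = e^{-\nu(\T)}$, hence
\[
|\theta(0)| \;=\; \frac{|f(0)|}{|U(0)|} \;\leq\; \frac{e^{-\nu(\T)}}{1 - \delta(c)}.
\]
Because $\nu \neq 0$, the quantity $1 - e^{-\nu(\T)}$ is strictly positive, and by tightening the choice of $c$ in Section~\ref{S:SubdomainSubsec} one can arrange $\delta(c) < 1 - e^{-\nu(\T)}$. Then $|\theta(0)| < 1$, which prevents $\theta$ from being a unimodular constant; being a singular inner function by construction, $\theta$ is non-trivial. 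The principal obstacle in this plan is the boundary analysis at points $\phi(\zeta) \in E$, since the approach of $\phi(z)$ to $\phi(\zeta)$ along images of non-tangential cones in $\D$ is tangential to $\T$ at the endpoints of complementary arcs, so the classical Fatou theorem for $S_\nu$ does not apply directly. The remedy exploits the dichotomy $\phi(\zeta) \in \supp{\nu}$ (negligible) versus $\phi(\zeta) \notin \supp{\nu}$, where local analyticity of $S_\nu$ across $\T$ makes all boundary limits coincide with $1$.
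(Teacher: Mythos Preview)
Your overall strategy---bounding $|U|$ via the boundary values of $f = S_\nu \circ \phi$ and then forcing $|\theta(0)| < 1$---is sound, and your argument for non-triviality is more self-contained than the paper's, which simply cites \cite[Theorem 2.1]{berman1984cyclic}. The paper's treatment of the lower bound on $|U|$ follows the same split into $\phi(\zeta) \in \partial\Omega \cap \D$ versus $\phi(\zeta) \in E$.

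There is, however, a genuine error in your boundary analysis on $\phi^{-1}(E)$. You assert that ``$\nu \perp \m$ gives $|\supp{\nu}| = 0$'', and this is false: singularity of $\nu$ means only that $\nu$ is carried by some Borel $\m$-null set, but the \emph{closed} support of $\nu$ may have positive, or even full, Lebesgue measure (take a convergent sum of point masses at a countable dense subset of $E$). In the present setting $E \in \assoc{w}$ is explicitly allowed to satisfy $|E| > 0$, so $\phi^{-1}(\supp{\nu})$ may have positive measure and your dichotomy leaves the boundary behaviour of $f$ uncontrolled there.

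The repair is to drop the reference to $\supp{\nu}$ and argue directly that $|f| = 1$ $\m$-almost everywhere on $\phi^{-1}(E)$, using that $|S_\nu| = 1$ $\m$-a.e.\ on $\T$ (not merely on $\T \setminus \supp{\nu}$). One route: for $\m$-a.e.\ $w \in E$ the radial limit of $S_\nu$ at $w$ exists with modulus $1$; since $\Omega$ is star-shaped about $0$ and $r(t) = 0$ for $e^{it} \in E$, the radial segment $[0,w)$ lies in $\Omega$, so its $\phi$-preimage is a curve in $\D$ ending at $\zeta = \phi^{-1}(w)$ along which $f$ has the asymptotic value $S_\nu(w)$; Lindel\"of's theorem for $f \in \hil^\infty$ then identifies this with the non-tangential limit of $f$ at $\zeta$. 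Once this is in place, your Poisson bound $|U| \geq 1 - \delta(c)$ and the evaluation $|\theta(0)| \leq e^{-\nu(\T)}/(1-\delta(c)) < 1$ go through as written.
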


\begin{proof}
The non-triviality of $\theta$ is a consequence of \cite[Theorem 2.1]{berman1984cyclic}. We clearly have $|U| = |S_\nu \circ \phi| \leq 1$ in $\D$, so it remains to verify that $U$ is bounded from below in $\D$. Since $U$ is outer, this we can do by showing that $|U|$ is essentially bounded from below on $\T$. Certainly we have $|U| = 1$ $\m$-almost everywhere on $\phi^{-1}(E)$, for $\phi$ respects subsets of full $\m$-measure in $E$ and $\phi^{-1}(E)$, and so for $\m$-almost every $w = \phi(z) \in E$ and $\m$-almost every $z \in \phi^{-1}(E)$ we have \[ 1 = |S_\nu(w)| = |\theta(z) \cdot U(z)| = |U(z)|.\]  If $z \in \T \setminus \phi^{-1}(E)$, then $w = \phi(z) \in \D \cap \partial \Omega$, and so $|S_\nu(w)| > 1/2$ according to Section \ref{S:SubdomainSubsec}. Therefore
\[ |U(z)| \geq |\theta(z) \cdot U(z)| = |S_\nu(w)| > 1/2.\]
\end{proof}

\subsection{Proof of the permanence property}

In the notation of \thref{P:PermanenceInnerAssocSet}, the function $h/S_\nu$ is bounded outside of $\Omega$ as a consequence of \eqref{E:SnuLowerEst}. What remains to be shown is that it is bounded inside of $\Omega$ also. Because $h \in [S_\nu]$, there exists a sequence $\{p_n\}_n$ of polynomials for which $S_\nu \cdot p_n \to h$ in the norm of $\Po^t(\mu)$. In particular, we have convergence pointwise for every $z \in \D$. If the composed sequence $(S_\nu \circ \phi)\cdot (p_n \circ \phi)$ were bounded in the norm of $\hil^t$, then in the notation of \thref{L:SingCompInnerFactor} we could conclude that $(h \circ \phi)/\theta \in \hil^\infty$, and consequently $h/S_\nu$ would be bounded in $\Omega$. We can't quite obtain the desired norm bound for $(S_\nu \circ \phi)\cdot (p_n \circ \phi)$, but we can after a multiplication of the sequence with appropriate outer functions.

We introduce the outer function $F_E$  satisfying the following equality $\m$-almost everywhere on $\T$:

\begin{align}
    \label{E:FEMultDef}
    |F_E(\zeta)| = \begin{cases} 
    \dist{\zeta}{E}, & \zeta \in \T \setminus E, \\ 
    1, & \zeta \in E.    
    \end{cases}
\end{align}
That is,
\begin{equation*}
\label{E:FEMultFormula}
F_E(z) = \exp \Big( \int_{\T \setminus E} \frac{\zeta + z}{\zeta - z} \log \dist{\zeta}{E} \, d\m(\zeta) \Big), \quad z \in \D.  
\end{equation*}

The convergence of the integral in the definition of $F_E$ is assured as a consequence of $\log \dist{\zeta}{E} \in \mathcal{L}^1(d\m)$ for sets $E \in \BC$ and  a short computation showing that \[\int_\ell \log \dist{\zeta}{E} d\m(\zeta) \simeq |\ell| \log (e/|\ell|)\] for short enough maximal intervals $\ell$ complementary to $\T$. Note that by adding finitely many points to $E$, we may ensure that the complementary intervals $\ell$ have a length bounded from above by any desired small constant. The operation of adding finitely many points to $E$ clearly does not affect its membership in $\assoch{h}{w}$.

If $w \in \partial \Omega \cap \D$, then $w^* = w/|w| \in \T \setminus E$, and by construction of $\Omega$ we have, for some constant $c > 0$, \begin{equation}
\label{E:wwstarDistEst} \dist{w}{w^*} = 1-|w| \simeq (\dist{w^*}{E})^2.
\end{equation} The harmonic measure in $\D$ at $w$ of the interval $I_w$ of length $(1-|w|)/2$ centered at $w^*$, namely
\[ \int_{I_w} \frac{1-|w|^2}{|\zeta - w|^2} d\m(\zeta),\] is then readily seen to be bounded from below independently of $w$, say by $\delta > 0$. If the complementary interval $\ell$ containing $w^*$ is sufficiently short, then by \eqref{E:wwstarDistEst} we have \[ \frac{1-|w|}{2} \leq \dist{w^*}{E}.\] Thus, by the remark in previous paragraph, we may assume that $I_w$ does not intersect $E$. By this observation and \eqref{E:wwstarDistEst}, we conclude that for every $\zeta \in I_w$, we have the estimate
\[F_E(\zeta) = \dist{\zeta}{E} \leq \dist{w^*}{E} + \frac{1-|w|}{2} \lesssim \sqrt{1-|w|}.\] So, since $|F_E|$ is bounded in $\D$, we obtain by the classical Two Constants Theorem that 
\begin{equation}
    \label{E:FEDecayEstimate}
    |F_E(w)| \lesssim \Big( \sup_{\zeta \in I_w} |F_E(\zeta)| \Big)^\delta \lesssim (1-|w|)^{\delta/2}, \quad w \in \partial \Omega \cap \D.
\end{equation}
By replacing $F_E$ by a positive power of itself, we may replace $\delta/2$ above by a positive number arbitrarily large. 

\begin{proof}[Proof of \thref{P:PermanenceInnerAssocSet}]
Assume that $h \in [S_\nu] \cap \hil^\infty$, and that $S_\nu \cdot p_n \to h$ in $\Po^t(\mu)$, $\{p_n\}_n$ being a sequence of polynomials. Since $\mu$ has form \eqref{E:MuDef}, the convergence implies that we have the two bounds
\[ \sup_{n} \int_\D |S_\nu \cdot p_n|^t dA_\alpha < \infty\]
and 
\begin{equation}
\label{E:CircleBoundIneq}
\sup_{n} \int_\T |S_\nu \cdot p_n|^t w \, d\m < \infty.
\end{equation}  

The first of the two bounds implies the growth estimate 
\[ |S_\nu(z) \cdot p_n(z)| \leq \frac{C}{(1-|z|)^B}, \quad z \in \D\] with constants $B, C > 0$ independent of $n$ (see \cite[Theorem 1 of Chapter 3]{durenbergmanspaces}). As a consequence of \eqref{E:FEDecayEstimate}, or more precisely as a consequence of the comment immediately following that equation, we may assume that
\[ \sup_{z \in \partial \Omega \cap \D} |F_E(z) \cdot S_\nu(z) \cdot p_n(z)| \leq 1.\]

Recall the definition of the outer function $W_E$ in \eqref{E:WEMultDef}. If $|E| = 0$ then $W_E$ reduces to the constant $1$, and in that case $W_E$ plays no role in the following estimates. In any case, as a consequence of the inequality $|W_E| \leq w^{1/t}$ which holds $\m$-almost everywhere on $E$, the second bound \eqref{E:CircleBoundIneq} tells us that \[ \sup_{E} \int_E |W_E \cdot S_\nu \cdot p_n|^t \, d\m < \infty.\] The bound holds trivially if $|E| = 0$, of course. Since $\partial \Omega = E \cup (\partial \Omega \cap \D)$, the two estimates imply
\[ \sup_n \int_{\partial \Omega} |F_E \cdot W_E \cdot S_\nu \cdot p_n|^t ds < \infty,\] where $ds$ denotes the arclength element on $\partial \Omega$. Setting \[g_n := F_E \cdot W_E \cdot S_\nu \cdot p_n \in \hil^\infty\] and changing variables, we obtain
\[ \sup_n \int_\T |g_n \circ \phi|^t |\phi'| d\m < \infty\] where $\phi: \D \to \Omega$ is the earlier introduced conformal map. Since $|\phi'|$ is bounded from below on $\T$, we obtain that $\{g_n \circ \phi\}_n$ is a bounded subset of the Hardy space $\hil^t$. The pointwise convergence $S_\nu \cdot p_n \to h$ in $\D$ implies that $g_n \circ \phi \to (F_E \cdot W_E \cdot h) \circ \phi$ in $\D$, and since $g_n \circ \phi$ has $\theta$ as an inner factor according to \thref{L:SingCompInnerFactor} (more precisely, we mean that the quotient of $g_n \circ \phi$ and $\theta$ is bounded in $\D$), so does $(F_E \cdot W_E \cdot h) \circ \phi$, this time by \thref{L:HpPerm}. The functions $F_E \circ \phi$ and $W_E \circ \phi$ are outer by \thref{L:OuterComp}. Hence actually $h \circ \phi$ has the inner factor $\theta$. In notation of \thref{L:SingCompInnerFactor}, the function $\dfrac{h \circ \phi}{\theta} = \dfrac{(h \circ \phi) \cdot U }{ \theta \cdot U}$ is bounded in $\D$, which is equivalent to $\dfrac{h \cdot (U \circ \phi^{-1})}{S_\nu}$ being bounded in $\Omega$. Since $U$ is bounded from below, $h/S_\nu$ is bounded in $\Omega$. By an earlier remark, $h/S_\nu$ is bounded in $\D \setminus \Omega$, and so $h/S_\nu \in \hil^\infty$. The proof is complete.
\end{proof}

\section{Cyclicity of singular inner functions} \label{S:CyclicitySection}

\subsection{The goal}
The result to be proved in this section is the sufficiency part of \thref{T:CyclicityMainTheorem}, and also the following variant.

\begin{prop}[\textbf{Main Cyclicity Result}] \thlabel{P:CyclicityProp} Let $\mu$ be of a measure of the form \eqref{E:MuGeneralizedDef}. If $t \in (0, \infty)$, $\Po^t(\mu)$ is a space of analytic functions, and $S_{\nu}$ is a singular inner function for which the corresponding singular measure satisfies
\begin{equation}
    \label{E:NuVanishAssoc1}
\nu(E) = 0, \quad E \subset \assoch{h}{w},
\end{equation} then $S_{\nu}$ is cyclic in $\Po^t(\mu)$.
\end{prop}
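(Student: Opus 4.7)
Since $[S_\nu]$ is a closed $M_z$-invariant subspace and polynomials are dense in $\Po^t(\mu)$, cyclicity of $S_\nu$ reduces to showing $1 \in [S_\nu]$. My plan is to mimic the construction of Section~\ref{S:ConstrSubsecSplitting} but with $S_\nu$ built into the approximants: I will produce bounded analytic functions $F_N = Q_N \cdot S_\nu \in \hil^\infty$, with each $Q_N$ outer, which are uniformly norm-bounded in $\mathcal{L}^{t^*}(\mu)$ for some $t^* > \max(t,1)$ and converge, along a subsequence, weakly in $\mathcal{L}^{t^*}(\mu)$ to the constant function $1$. Each $F_N$ lies in $\Po^{t^*}(\mu) \cap \hil^\infty$ by the standard dilation-and-truncation argument, and since $Q_N \in \hil^\infty$ is approximable in $\Po^{t^*}(\mu)$ by polynomials $q_{N,k}$, the product $F_N$ is a limit of $S_\nu \cdot q_{N,k}$; hence $F_N \in [S_\nu]$. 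Weak closure of $[S_\nu]$ then yields $1 \in [S_\nu]$.

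\textbf{The moderator functions.} Writing $|Q_N| = e^{f_N}$ on $\T$, one has $|F_N| = e^{f_N}$ almost everywhere on $\T$ and $|F_N(z)| = \exp(P[f_N](z) - P[\nu](z))$ for $z \in \D$, where $P$ denotes the Poisson integral. I plan to construct bounded real-valued $f_N \in \mathcal{L}^\infty(d\m)$ such that, for every $\epsilon>0$ and all sufficiently large $N$:
\begin{enumerate}
    \item[(i)] $f_N \leq \epsilon \log^+(1/w)$ pointwise on $\T$;
    \item[(ii)] $\int_I f_N\,d\m \leq \epsilon\, h(|I|)$ for every interval $I \subset \T$;
    \item[(iii)] $\int_\T f_N\,d\m = \nu(\T)$;
    \item[(iv)] $f_N$ is essentially supported on a set $E_N \subset \T$ with $|E_N| \to 0$ as $N \to \infty$.
\end{enumerate}
Property (i) gives uniform boundedness of $\{F_N\}_N$ in $\mathcal{L}^{t^*}(w\,d\m)$ in analogy with the computation immediately after \thref{L:ConstructionLemma}; property (ii) combined with \eqref{E:PoissonEstimate0} gives the same in $\mathcal{L}^{t^*}(G_{ah}\,dA)$; property (iii) normalizes $F_N(0)=1$. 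The weak $\mathcal{L}^{t^*}(\mu)$ limit $F$ of a subsequence is then seen to satisfy $F|\D \equiv 1$ by the normal-family-and-maximum-principle argument of Section~\ref{S:ConstrSubsecSplitting}, while property (iv) and dominated convergence give $F|\T \equiv 1$ almost everywhere on the carrier of $w$. Since $\Po^t(\mu)$ is analytic we have $w = w_c$, so $F \equiv 1$ in $\Po^{t^*}(\mu)$, and the reduction of the first paragraph closes.

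\textbf{The essential step and main obstacle.} The construction of $f_N$ is the heart of the argument, and the only place where the vanishing hypothesis $\nu(E)=0$ for $E \in \assoch{h}{w}$ is genuinely used. I plan to first adapt Korenblum's premeasure machinery (see Appendix~\ref{sec:appendixA}) to dominate $\nu$ by a subadditive premeasure $\rho_N$ on intervals with $\rho_N(I) \leq \epsilon_N h(|I|)$ and $\epsilon_N \to 0$ as $N \to \infty$. The vanishing of $\nu$ on $\assoch{h}{w}$ is precisely the obstruction-free condition guaranteeing the existence of such $\rho_N$: in its absence, one could extract via compactness and the covering definition \eqref{E:HausdorffFuncDef} a set in $\BCh$ on which $\log w$ is integrable but which carries positive $\nu$-mass. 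Then, on each interval $I$ of a suitable dyadic partition of $\T$, the duality \thref{P:DualityThm} together with the residual lower bound \eqref{E:MainEstimateResidualBound} delivers a bounded $g_{N,I} \in \mathcal{F}(h,\epsilon\log^+(1/w)|I)$ whose $\mathcal{L}^1$-mass matches $\rho_N(I)$ up to an absolute constant. Pasting the pieces $g_{N,I}$ together and enforcing the global normalization (iii) by a Korenblum-style linear-programming redistribution produces $f_N$. I expect the subtlest technical issue to be the combinatorial compatibility between the Roberts-type atomization driving the premeasure $\rho_N$ and the partition governing the $\mathcal{L}^1$-optimization; balancing interval sizes against the modulus of continuity of $\rho_N$ relative to $h$ will require care, and it is here that the premeasure regularity results of the appendix will do the bulk of the work.
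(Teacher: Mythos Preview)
Your conditions (ii) and (iii) on $f_N$ are incompatible: taking $I=\T$ in (ii) gives $\nu(\T)=\int_\T f_N\,d\m\le\epsilon\,h(1)$, which fails for small $\epsilon$ once $\nu\neq 0$. The natural fix --- replacing (ii) by $\int_I f_N\,d\m-\nu(I)\le\epsilon\,h(|I|)$, which is what the Poisson estimate for $\log|S_\nu F_N|$ actually requires --- does not help either: a \emph{bounded} $f_N$ with $\int_\T f_N\,d\m=\nu(\T)$ cannot satisfy this uniformly in $I$ when $\nu$ is singular. For instance, if $\nu$ has an atom at $p$, splitting a short arc about $p$ at the point $p$ forces $f_N$ to carry essentially all of its mass on an interval of arbitrarily small length, contradicting $f_N\in\mathcal{L}^\infty(d\m)$. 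The same obstruction rules out your proposed ``domination of $\nu$ by a premeasure $\rho_N$ with $\rho_N(I)\le\epsilon_N h(|I|)$'': take $I=\T$.

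This tension is not a technicality but the crux of the argument, and the paper's proof is built around it. The paper runs a \emph{two}-parameter linear program with an auxiliary scale $A=A(\epsilon)$ that may be enormous: the bounded functions $f_{\epsilon,N}$ it produces satisfy only the weak interval bound $\int_I f_{\epsilon,N}\,d\m\lesssim A(\epsilon)\,h(|I|)$ (so the associated $F_{\epsilon,N}$ lie in $\hil^\infty$ but with no useful growth control in $\D$), while the sharp bound $f_\epsilon(I)-\nu(I)\le\epsilon\,h(|I|)$ emerges only for the limiting \emph{premeasure} $f_\epsilon$, which is not a bounded function. Hence $F_\epsilon$ is not a priori in $[S_\nu]$, and a separate ``absorption'' step is needed: one shows inductively that $S_\nu F_\epsilon^{k/D}\in[S_\nu]$ for $k=0,1,\ldots,D$ by multiplying in the bounded pieces $F_{\epsilon,N}^{1/D}$ one at a time, the exponent $1/D$ being chosen so that the bad constant $A(\epsilon)$ is tamed. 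Your single-step scheme collapses these two stages into one, and it is precisely that collapse which is impossible.
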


We make a few remarks before proceeding with the proof.

\begin{itemize}
    \item It suffices to prove the result for large $t > 1$. Indeed, if $t_1 < t_2$ and $S_{\nu}$ is cyclic in $\Po^{t_2}(\mu)$, then denoting by $\| \cdot \|_{\mu,t}$ the norm in $\mathcal{L}^t(\mu)$, there exists a sequence $\{p_n\}_n$ of polynomials such that $\|S_{\nu}p_n - 1\|_{\mu, t_2} \to 0$. By Jensen's inequality applied to the convex function $x \mapsto x^{t_2/t_1}$ and a normalized version of the measure $\mu$, we obtain readily that $\|S_{\nu}p_n - 1\|_{\mu, t_1} \to 0$. In particular, we may in our proofs work in the more convenient reflexive range $t \in (1,\infty)$. It is worth noting here that increasing the parameter $t$ does not affect analyticity of the space $\Po^t(\mu)$. To see this, note that if $\Po^{t_2}(\mu)$ is not analytic, and so contains a function $f$ satisfying $f|\D \equiv 0$, then whenever $t_1 < t_2$, we have also that $f \in \Po^{t_1}(\mu)$, and consequently $\Po^{t_1}(\mu)$ is not analytic either. In fact, for most natural gauge functions $h$, such as the one in \eqref{E:CarlesonGaugeDef}, it is a consequence of \thref{T:ThomsonDecompTheorem}, or the variants presented in Section~\ref{S:IrreducibilityProofSec}, that analyticity of $\Po^t(\mu)$ is independent of the choice of the parameter $t$, since it is decided entirely by the structure of the weight $w$ appearing in \eqref{E:MuDef} or \eqref{E:MuGeneralizedDef}. 
    \item It suffices to show that $S_{\nu}^{1/B}$ is cyclic in $\Po^t(\mu)$ for some positive integer $B$. Indeed, if $m_1$ and $m_2$ are two cyclic \textit{bounded} functions, then so is their product. The claim follows from the estimate 
    \[ \|m_1m_2pq - 1\|_{\mu,t} \leq \|m_2q - 1\|_{\mu,t} \cdot \|m_1p\|_\infty + \|m_1p - 1\|_{\mu,t}\] where $p$ and $q$ are polynomials. If $m_1$ is cyclic, we may choose $p$ to make the second term on the right-hand side small. Having fixed $p$, if $m_2$ s cyclic we may choose $q$ to make the first term on the right-hand side small. Cyclicity of $m_1m_2$ follows.
\item We emphasize again that the necessity for cyclicity of $S_\nu$ of vanishing of $\nu$ on associated set has been established in Section \ref{S:PermanenceSubsec} only for the case of measures $\mu$ of the form \eqref{E:MuDef}. In particular, the converse has not been established in the context of more general gauges $h$. Thus we make no claims regarding the necessity of the vanishing condition in \thref{P:CyclicityProp}.
\end{itemize}

\subsection{Modification of Korenblum's linear program}
\label{S:KorenblumProgramSubsec}

Let $\nu$ be a finite non-negative singular Borel measure on $\T$, and set 
\begin{equation}
    \label{E:nu0def}
    \nu_0 =: \nu - \nu(\T) d\m.
\end{equation}
Let $N$ be a positive integer, and $\{I_s\}_{s=1}^N$ be a partition of $\T$ into the half-open intervals \[ I_s := \{ e^{it} : 2\pi(s-1)/N \leq t < 2\pi s/N \}, \quad s = 1, \ldots, N.\] Note that the partition depends on the choice of $N$, but for convenience we suppress this in our notation. We introduce a notation for unions of consecutive intervals $I_s$ as follows: 
\begin{equation}
    \label{E:IklDef}
    I_{k,l} = \bigcup_{s=k}^l I_s.
\end{equation}
As before, we set \[R = \log^+(1/w).\]

We will study a variant of Korenblum's linear program in \cite[Equation 3.6]{korenblum1977beurling}. Our program, with real unknowns $x_s$, $s = 1, \ldots, N$, is as follows:

\begin{equation}
\label{E:LinearProgram}
    \begin{dcases}
        -\nu_0(I_{k,l}) + \sum_{s=k}^l x_s \, \leq \epsilon \cdot h(|I_{k,l}|),\\
        \quad \quad \quad \quad \quad \sum_{s=k}^l x_s \leq \mathcal{M}(A \cdot h, R|I_{k,l}), \\
        \quad \quad \quad \quad \quad \sum_{s=1}^N x_s = 0.        
    \end{dcases}
\end{equation}

The notation $R|I$ stands for the restriction of $R$ to the interval $I$, interpreted as before to vanish outside of the interval $I$. The quantities $\epsilon$ and $A$ are positive numbers. The most significant difference between \eqref{E:LinearProgram} and the linear program in \cite{korenblum1977beurling} is the appearance of the Hausdorff functional $\mathcal{M}(A\cdot h,R|I_{k,l})$ in our version, in the place where in Korenblum's original program the quantity $h(|I_{k,l}|)$ is present. In accordance with the presentation of Korenblum's theorem in the book by Hedenmalm, Korenblum and Zhu in \cite{hedenmalmbergmanspaces}, we will say that the program is \textit{consistent} if for every choice of $\epsilon > 0$ there exists a choice of $A = A(\epsilon) > 0$ for which the linear program \eqref{E:LinearProgram} has a solution $\{x_s\}_{s=1}^N$ for \textit{any} $N > 0$. Otherwise, the program is \textit{inconsistent}, in which case there exists a fixed $\epsilon > 0$ such that for any choice of $A > 0$ the program fails to have a solution for some $N = N(A)$. 

Our purpose in the rest of the section is to establish two claims.
The first claim is that if the program \eqref{E:LinearProgram} is consistent, then $S_{\nu}$ is cyclic in the corresponding $\Po^t(\mu)$ space. The second claim is that if \eqref{E:LinearProgram} is inconsistent, then $\nu(E) > 0$ for some set $E \in \assoch{h}{w}$. If the claims are established, then it follows from the first claim that if $S_{\nu}$ is not cyclic in $\Po^t(\mu)$, then the program is inconsistent, and consequently from the second claim we obtain $\nu(E) > 0$ for some $E \in \assoch{h}{w}$. Thus \thref{P:CyclicityProp} and the sufficiency part of \thref{T:CyclicityMainTheorem} are consequences of these two claims. Our proofs will use adaptations of Korenblum's technique from \cite{korenblum1977beurling}, with the main new ideas relating to the way in which the functional $\mathcal{M}(h,R)$ enters the picture.

\subsection{Sufficiency of consistency for cyclicity}

We are assuming that the program is consistent. We fix a small $\epsilon > 0$ and are given a corresponding $A = A(\epsilon) > 0$ for which the system in \eqref{E:LinearProgram} has a solution $\{x_s\}_{s=1}^N$ for every $N$. 

\subsubsection{Construction of a partial solution}
From a solution $\{x_s\}_{s=1}^N$ we construct a bounded function $f_{\epsilon,N}: \T \to \mathbb{R}$ according to the formula \begin{equation}
    \label{E:fepsilonNstructure} f_{\epsilon,N} = \nu(\T) + \sum_{s=1}^N g_s,
\end{equation} where $g_s$ is supported on the interval $I_s$. If $x_s \leq 0$, we set $g_s$ to be constant on $I_s$ so that $\int_{I_s} g_s \, d\m = x_s$. If $x_s > 0$, then the second inequality in \eqref{E:LinearProgram} implies that \[ x_s \leq \mathcal{M}(A(\epsilon)\cdot h, R|I_s),\] and so
by \thref{P:DualityThm} there exists a bounded non-negative function $g^*_s$ supported on $I_s$ which satisfies the pointwise inequality $g^*_s \leq R$, the upper estimate $\int_I g^*_s \, d\m \leq A(\epsilon) \cdot h(|I|)$ for all intervals $I \subset \T$, and the equality $\int_{I_s}6 g^*_s \, d\m = x_s$. We set $g_s := 6 g^*_s$. Then, for every $s \in \{1, \ldots, N\}$ we have that

\begin{itemize} 
    \item $g_s \leq 6R$ pointwise on $\T$,
    \item $\int_I g_s \, d\m \leq 6  A(\epsilon) \cdot h(|I|)$ for every interval $I \subset \T$,
    \item $\int_{I_s} g_s \, d\m = x_s.$
\end{itemize}

We use the last two of the above three properties to establish the following lemma.

\begin{lem}
    \thlabel{L:Lemma1Est} In the notation as above, for every interval $I \subset \T$ we have the estimate
    \[ \int_I f_{\epsilon,N} \, d\m \leq 26 A(\epsilon) \cdot h(|I|) + \nu(\T)|I|\]
\end{lem}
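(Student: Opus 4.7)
The plan is to decompose $I$ according to its overlap with the partition $\{I_s\}_{s=1}^N$ of $\T$. Let $I_{k,l}$ denote the maximal block of consecutive partition intervals $I_s$ entirely contained in $I$ (possibly empty). Then $I \setminus I_{k,l}$ consists of at most two boundary pieces, one contained in $I_{k-1}$ and the other in $I_{l+1}$. Since each $g_s$ is supported in $I_s$ and $\int_{I_s} g_s \, d\m = x_s$, the integral decomposes as
\[ \int_I f_{\epsilon,N} \, d\m \;=\; \nu(\T)|I| \;+\; \sum_{s=k}^{l} x_s \;+\; \int_{I \cap I_{k-1}} g_{k-1} \, d\m \;+\; \int_{I \cap I_{l+1}} g_{l+1} \, d\m, \]
with the obvious convention that empty sums and empty pieces contribute zero.

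To bound the interior sum, I will combine the second inequality in \eqref{E:LinearProgram} with the trivial upper estimate $\mathcal{M}(A \cdot h, R|I_{k,l}) \leq A(\epsilon) \, h(|I_{k,l}|)$. The latter follows from the definition \eqref{E:HausdorffFuncDef} by taking $\mathcal{U}$ to consist of a single open arc slightly larger than the closure of $I_{k,l}$ and then passing to the limit, using continuity of $h$; note that the integral term in \eqref{E:HausdorffFuncDef} vanishes once the open arc covers $I_{k,l}$, because $R|I_{k,l}$ has been extended by zero off of $I_{k,l}$. Since $|I_{k,l}| \leq |I|$ and $h$ is increasing, this yields $\sum_{s=k}^{l} x_s \leq A(\epsilon) \, h(|I|)$.

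Each of the two boundary contributions will be bounded by $6 A(\epsilon) h(|I|)$ via the universal estimate $\int_J g_s \, d\m \leq 6 A(\epsilon) h(|J|)$, valid for every interval $J \subset \T$ and every index $s$: when $x_s > 0$ this is the defining property of $g_s / 6 \in \mathcal{F}(A \cdot h, R|I_s)$ from the construction, and when $x_s \leq 0$ the function $g_s$ is a non-positive constant on $I_s$ and vanishes elsewhere, so its integral over any interval is non-positive. Applying this to the two boundary intervals $J = I \cap I_{k-1}$ and $J = I \cap I_{l+1}$, whose lengths are at most $|I|$, yields
\[ \int_I f_{\epsilon,N} \, d\m \;\leq\; \nu(\T)|I| + 13 \, A(\epsilon) \, h(|I|) \;\leq\; \nu(\T)|I| + 26 \, A(\epsilon) \, h(|I|), \]
which is the claim. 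No real obstacle arises; I note in particular that the first constraint in \eqref{E:LinearProgram} (the one involving $\nu_0$) plays no role in this lemma — it will enter later when $f_{\epsilon,N} \, d\m$ is compared to the singular measure $\nu$ itself — and the constant $26$ in the statement is simply a comfortable cushion above the actual value $13$ produced by the argument.
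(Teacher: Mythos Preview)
Your argument is essentially identical to the paper's: the same decomposition into an interior block (handled via the second constraint of \eqref{E:LinearProgram} and the trivial bound $\mathcal{M}(A\cdot h, R|I_{k,l}) \leq A(\epsilon)h(|I_{k,l}|)$) plus two boundary pieces (handled via $\int_J g_s\,d\m \leq 6A(\epsilon)h(|J|)$), yielding the constant $13$.

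There is one oversight, however. Your decomposition tacitly assumes the interior block has the form $I_{k,l}$ with $1\leq k\leq l\leq N$, which fails when the arc $I$ contains the point $1\in\T$ in its interior; in that case the contained partition intervals wrap around (e.g.\ $I_{N-1},I_N,I_1,I_2$) and the second constraint of \eqref{E:LinearProgram}, which is only stated for blocks $I_{k,l}$ with $k\leq l$, does not apply directly. The paper handles this by first assuming $1\notin\operatorname{int}(I)$, obtaining the bound with constant $13$, and then in the general case splitting $I$ at the point $1$ into two arcs to which the first case applies, picking up a factor of $2$. So the constant $26$ is not merely a cushion as you say---it is exactly what the wrap-around case costs.
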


\begin{proof}
    Assume for the moment that $I$ does not include the point $1 \in \T$ in its interior. Let $\{I_s\}_{s=k}^l$ be those intervals in the partition of $\T$ corresponding to $N$ that intersect $I$. For the first interval $I_k$, the second point above implies that we have the estimate
    \begin{align*}
    \int_{I_k \cap I} f_{\epsilon,N} \, d\m & = \int_{I_k \cap I} g_k \, d\m + \nu(\T)|I_k \cap I| \\ &\leq  6A(\epsilon) \cdot h(|I \cap I_k|) + \nu(\T)|I_k \cap I|\\ &\leq  6A(\epsilon) \cdot h(|I|) + \nu(\T)|I_k \cap I|.
    \end{align*} For the last interval $I_l$, we similarly have 
    \[ \int_{I_l \cap I} f_{\epsilon,N} \, d\m \leq 6 A(\epsilon) h(|I|) + \nu(\T)|I_l \cap I|.\]
    For the union of intervals $I_s$ contained inside $I$ (if any), we use the third point above and the second inequality in \eqref{E:LinearProgram} to obtain
    \begin{align*}
        \int_{I_{k+1, l-1}} f_{\epsilon,N} \, d\m &= \sum_{s=k+1}^{l-1} x_s + \nu(\T)| I_{k+1,l-1}|\\ &\leq \mathcal{M}(A(\epsilon) \cdot h, R| I_{k+1, l-1}) + \nu(\T)| I_{k+1,l-1}| \\ &\leq A(\epsilon) \cdot h(I) + \nu(\T)| I_{k+1,l-1}|,
    \end{align*}  the last inequality following easily from the definition of the Hausdorff functional in \eqref{E:HausdorffFuncDef}. By adding our three estimates, we obtain that $\int_I f_{\epsilon,N} \, d\m \leq 13 A(\epsilon) \cdot h(|I|) + \nu(\T)|I|$, if $I$ does not contain the point $1$ in its interior. If it does, then we may express $I$ as a union of two intervals, none of which contains the point $1$ in its interior, and apply the already established estimate to the two parts.
\end{proof}

We note that in the estimate in \thref{L:Lemma1Est} it is $A(\epsilon)$ that is the dominating quantity, and that we lack control on its size. We may therefore think of the right-hand side in the inequality in \thref{L:Lemma1Est} as $A'(\epsilon)\cdot h(|I|)$, where $A'(\epsilon)$ is a large quantity comparable to $A(\epsilon)$.

\subsubsection{Limiting argument} At this point we will introduce a premeasure which is a weak-type limit of a subsequence of $\{f_{\epsilon, N}\}_N$. Premeasures are set functions defined on the algebra of intervals of $\T$. Their basic properties are explained in Appendix \ref{sec:appendixA} and will be used frequently below. 

By the definition of $f_{\epsilon,N}$ in \eqref{E:fepsilonNstructure}, properties of $g_s$ listed just before \thref{L:Lemma1Est}, and the third equation in \eqref{E:LinearProgram}, the sequence $\{f_{\epsilon,N}\}_N$ differs from a sequence of normalized premeasures only by the constant $\nu(\T) d\m$ (see \thref{D:PremDef} in Appendix~\ref{sec:appendixA}). Applying \thref{L:Helly} combined with \thref{L:Lemma1Est} we conclude that we may let $N \to \infty$ along a sequence and assume that we have the convergence 
\begin{equation}
\label{E:PremeasureWeakConv} f_{\epsilon,N} - \nu(\T)d\m \to f_\epsilon - \nu(\T)d\m
\end{equation} in the sense stated in \thref{L:Helly}. Here $f_\epsilon$ is a premeasure and not a function, and $f_\epsilon(\T) = \nu(\T)$.

\begin{lem} \thlabel{L:Lemma2Est}
In the notation as above, we have the estimate
\begin{equation}
    \label{E:nuFsmoothnessEstimate}
-\nu(I) + f_\epsilon(I) \leq \epsilon \cdot h(|I|) \end{equation} for each interval $I \subset \T$.
\end{lem}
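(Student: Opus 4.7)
The plan is to first translate the linear program constraint into a direct inequality for the functions $f_{\epsilon,N}$ on unions of consecutive partition intervals, and then pass to the limit using \thref{L:Helly}.

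First, I would expand the first inequality in \eqref{E:LinearProgram} using $\nu_0 = \nu - \nu(\T) d\m$ from \eqref{E:nu0def}. Combined with $f_{\epsilon,N} = \nu(\T) + \sum_s g_s$ and the identity $\int_{I_s} g_s\, d\m = x_s$ from the construction in Section~\ref{S:ConstrSubsecSplitting}, this rewrites as
\begin{equation}
\label{E:LPrewritten}
-\nu(J) + \int_{J} f_{\epsilon,N}\, d\m \leq \epsilon \cdot h(|J|)
\end{equation}
for every $J = I_{k,l}$ of the form \eqref{E:IklDef}. This is the LP constraint in the geometric form we need.

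Next, for an arbitrary interval $I \subset \T$, I would, for each $N$, choose $J_N = I_{k_N, l_N}$ to be the minimal union of partition intervals with $\overline{I} \subseteq J_N$, so that $|J_N| \leq |I| + 2/N$. Applying \eqref{E:LPrewritten} to $J_N$ gives
\[
\int_{J_N} f_{\epsilon,N}\, d\m \leq \epsilon \cdot h(|J_N|) + \nu(J_N).
\]
Along the subsequence chosen in \eqref{E:PremeasureWeakConv}, the right-hand side tends to at most $\epsilon \cdot h(|I|) + \nu(\overline{I})$, using continuity of $h$ and outer regularity of $\nu$ against the decreasing family $\bigcap_M \bigcup_{N \geq M} J_N = \overline{I}$.

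For the left-hand side I would argue as follows. Fix $\delta > 0$. Since a premeasure has at most countably many atoms, I can pick an interval $I_\delta \subseteq I$ with $|I \setminus I_\delta| < \delta$ whose endpoints are continuity points of both $f_\epsilon$ and $\nu$. For all large $N$, $I_\delta \subseteq J_N$, and I split
\[
\int_{J_N} f_{\epsilon,N}\, d\m = \int_{I_\delta} f_{\epsilon,N}\, d\m + \int_{J_N \setminus I_\delta} f_{\epsilon,N}\, d\m.
\]
The first term converges to $f_\epsilon(I_\delta)$ by the premeasure convergence in \thref{L:Helly}. The second term is taken over a set of $\m$-measure at most $\delta + 2/N$, and it can be controlled in absolute value by the two-sided premeasure bound: \thref{L:Lemma1Est} provides the upper bound, and the matching lower bound follows by applying \thref{L:Lemma1Est} to the complementary interval together with $\int_\T f_{\epsilon,N}\, d\m = \nu(\T)$, which is the third constraint in \eqref{E:LinearProgram}. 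Sending $N \to \infty$ and then $\delta \to 0$ gives $f_\epsilon(\overline{I}) - \nu(\overline{I}) \leq \epsilon \cdot h(|I|)$, and the monotonicity properties of premeasures from Appendix~\ref{sec:appendixA} allow passage from $\overline{I}$ to $I$.

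The main obstacle I anticipate is the careful handling of the boundary of $I$ when the partition endpoints at stage $N$ do not align with $\partial I$, and in particular obtaining a \emph{lower} bound on $\int_{J_N \setminus I_\delta} f_{\epsilon,N}\, d\m$ of order $O(h(\delta + 2/N))$. Unlike \thref{L:Lemma1Est}, which was engineered for one-sided control, this symmetric bound requires invoking both the LP inequality and the global constraint $\int_\T f_{\epsilon,N}\, d\m = \nu(\T)$ simultaneously, and a fair amount of bookkeeping.
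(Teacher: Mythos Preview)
Your reduction \eqref{E:LPrewritten} is correct and matches the paper's first step. The genuine problem is the one you yourself flag at the end: controlling the boundary term $\int_{J_N\setminus I_\delta} f_{\epsilon,N}\,d\m$ from \emph{below}. The method you propose --- applying \thref{L:Lemma1Est} to the complementary arc and using $\int_\T f_{\epsilon,N}\,d\m=\nu(\T)$ --- yields only
\[
\int_K f_{\epsilon,N}\,d\m \;\geq\; \nu(\T)|K| - 26A(\epsilon)\,h(|\T\setminus K|),
\]
and since $|\T\setminus K|$ is close to $1$ when $K$ is short, this lower bound is of order $-A(\epsilon)h(1)$ and does \emph{not} tend to $0$ as $\delta\to 0$. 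The one-sided bound in \thref{L:Lemma1Est} simply cannot be upgraded to the two-sided local bound $|\sigma_N(K)|\lesssim h(|K|)$ that your splitting argument needs: nothing in the construction prevents a single $x_s$ from being a fixed negative constant (say $x_{s(N)}\approx -c$), in which case $g_{s(N)}\approx -cN$ on $I_{s(N)}$, and if that interval sits inside $J_N\setminus I_\delta$ the boundary term stays bounded away from zero uniformly in $N$ and $\delta$. So this is not bookkeeping; the estimate you are hoping for is in general false.

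The paper's proof avoids the boundary term altogether. After passing to a subsequence along which the partitions are nested, it perturbs the endpoints of $I$ to obtain a half-open arc $I'$ which is \emph{exactly} a union of partition intervals for all large $N$, and whose endpoints are points at which the jump $J_{\rho_\sigma}$ of the limiting primitive is smaller than $\delta$. Then \eqref{E:LPrewritten} applies to $I'$ with no remainder, and the convergence statement \eqref{E:AlmostConvJumpError} in \thref{L:Helly} gives $f_\epsilon(I')$ directly, up to an error $2\delta$ coming from the jumps. Finally $I'$ is chosen close enough to $I$ that $|f_\epsilon(I')-f_\epsilon(I)|$, $|\nu(I')-\nu(I)|$ and $\big||I'|-|I|\big|$ are all at most $\delta$, using properties (P2)--(P3) of premeasures. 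The key idea you are missing is to make the auxiliary interval line up with the partition \emph{and} with near-continuity points of $f_\epsilon$ simultaneously, rather than trying to estimate a leftover piece.
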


The corresponding statement in Korenblum's paper \cite{korenblum1977beurling} is left without proof. We fill in the details, which requires us to use some specific properties of premeasures stated in the Appendix~\ref{sec:appendixA}.

\begin{proof}[Proof of \thref{L:Lemma2Est}]
The estimate follows from the convergence in \eqref{E:PremeasureWeakConv}. Without loss of generality, we may assume that the partitions of $\T$ associated to larger $N$ are refinements of those associated to smaller $N$. It follows that for any $\delta > 0$ there exists a half-open interval $I'$ which is a union of intervals in any partition $\{I_s\}_{s=1}^N$ associated to $N$ large enough, for which the following four statements hold:

\begin{itemize}
    \item the jumps $J_{\rho_\sigma}(t)$, $J_{\rho_\sigma}(t')$ defined in \eqref{E:JumpDef} corresponding to the endpoints $e^{it}, e^{it'}$ of $I'$ are smaller than $\delta$,
    \item $|f_\epsilon(I') - f_\epsilon(I)| \leq \delta$,
    \item $|\nu(I') - \nu(I)| < \delta$,
    \item $\big| |I'| - |I|\big| < \delta$.
\end{itemize}
All four parts can be ensured by choosing $I'$ to be the interval obtained from $I$ by a small perturbation of its endpoints. The first point can be ensured by \thref{L:FiniteJumpLemma}, and the second and third points can be ensured by properties (P2) and (P3) of premeasures stated in \thref{D:PremDef}. Then from the first inequality in \eqref{E:LinearProgram}, our definitions \eqref{E:nu0def} and \eqref{E:fepsilonNstructure}, the pointwise convergence property in \eqref{E:AlmostConvJumpError}, and the first bullet point above, we obtain
\begin{align*}
-\nu(I') + f_\epsilon(I') &= - \nu(I') + \rho_{f_\epsilon}(t') - \rho_{f_\epsilon}(t) \\ 
&\leq - \nu(I') + \limsup_{N \to \infty} \Big(  \rho_{f_{\epsilon, N}}(t') - \rho_{f_{\epsilon, N}}(t) \Big) + 2\delta \\
&=  -\nu(I') + \limsup_{N \to \infty} \int_{I'} f_{\epsilon,N}\, d\m + 2\delta \\ &=  -\nu_0(I') + \limsup_{N \to \infty} \sum_{s: I_s \subset I'} x_s + 2 \delta \\&\leq  \epsilon \cdot h(|I'|) + 2\delta. \end{align*} It follows from the last three bullet points above that \[ -\nu(I) + f_{\epsilon}(I) \leq 4 \delta + \epsilon\cdot h(|I| + \delta).
\]
By letting $\delta \to 0$ we obtain \eqref{E:nuFsmoothnessEstimate}.
\end{proof}

\subsubsection{A candidate solution} \label{S:interludiumSubsec}

In order to prove cyclicity of $S_{\nu}$ we should construct a candidate function $F$ for which $S_{\nu}F$ approximates the constant function $1$ in $\Po^t(\mu)$. Set \begin{equation}
    \label{E:FepsilonDef}
F_\epsilon(z) := \exp \Big(\int_{[0,2\pi)} \frac{e^{it} + z}{e^{it} - z} df_\epsilon(e^{it})\Big), \quad z \in \D,\end{equation} the integral being defined in the sense of premeasures as in \eqref{E:IntegratePremeasure} and the following discussion. We have \[|S_{\nu}(z)F_\epsilon(z)| = \exp \Big(\int_{[0,2\pi)} P_z(e^{it}) d[-\nu +f_\epsilon](e^{it})\Big), \quad z \in \D, \] where $P_z$ is the Poisson kernel in \eqref{E:PoissonKernelDef}. From a combination of \thref{L:Lemma2Est} and the Poisson integral estimate \eqref{E:PoissonEstimate} in Appendix A, we obtain 
\begin{equation}
    \label{E:SnuFepsilonGrowthDiskEstGeneral}
    |S_{\nu}(z)F_\epsilon(z)| \leq \exp \Big( C\epsilon \frac{h(1-|z|)}{1-|z|} \Big), \quad z \in \D,
\end{equation}
where $C > 0$ is a constant independent of $\epsilon$. This implies that \[ \limsup_{\epsilon \to 0} |S_{\nu}(z)F_\epsilon(z)| \leq 1, \quad z \in \D \] and that \[ \limsup_{\epsilon \to 0} \int_\D |S_{\nu}F_\epsilon|^t G_{ah} dA < \infty\] for any $a > 0$ (recall the definition of $G_{ah}$ in \eqref{E:MuGeneralizedDef}). Since $S_\nu(0)F_\epsilon(0) = 1$, it follows that as $\epsilon \to 0$, the functions $S_{\nu}(z)F_\epsilon(z)$ converge to $1$ uniformly on compact subsets of $\D$ (more precisely, we need to let $\epsilon \to 0$ along an appropriate sequence). To conclude that $S_{\nu}$ is cyclic in $\Po^t(\mu)$, we still lack something. Firstly, it is not clear if we have a norm bound on $S_{\nu} F_\epsilon$ in $\Po^t(\mu)$, since the measure $\mu$ involves a part on the boundary $\T$ also. Secondly, we do not know if $S_{\nu}F_\epsilon$ is a member of $[S_\nu]$, the $M_z$-invariant subspace generated by $S_{\nu}$, or even if it is a member of $\Po^t(\mu)$.

\subsubsection{Absorption into the invariant subspace}
We may assume that $6t$ is an integer by the first remark following \thref{P:CyclicityProp}. We are also assuming that $t > 1$, so that $\Po^t(\mu)$ is reflexive, and in particular, any norm bounded sequence of functions in the space has a subsequence converging weakly to an element of the space. The analyticity assumption on $\Po^t(\mu)$ implies that any $f \in \Po^t(\mu)$ is uniquely determined by $f|\D$, and will otherwise not play any role in the proofs.

According to the second remark following \thref{P:CyclicityProp}, it will suffice to show that $S_{\nu}^{1/6t} = S_{\nu/6t}$ is cyclic in $\Po^t(\mu)$. To ease the notation, we divide all the appearing measures and premeasures by the factor $6t$, and make the notational replacements $\nu \to \nu/6t$, $f_{\epsilon,N} \to f_{\epsilon,N}/6t$ and $f_{\epsilon} \to f_{\epsilon}/6t$. Since $6t > 0$, all our previous estimates are valid (such as the ones in \thref{L:Lemma1Est} and \thref{L:Lemma2Est}) with new constants comparable to the old ones. In particular, after this notational replacement, we now have on $\T$ the pointwise estimate \begin{equation} \label{E:FepsilonNPointwiseEst}
f_{\epsilon,N} \leq R/t + \nu(\T)/6t \leq R/t + 1 \end{equation} (recall \eqref{E:fepsilonNstructure}, the listed properties of $g_s$ right before the statement of \thref{L:Lemma1Est}, and increase $t$ slightly to make $\nu(T)/6t \leq 1$).

Set \begin{equation}
    \label{E:FEpsilonNdef}
    F_{\epsilon,N}(z) := \exp \Big( \int_{[0,2\pi)} \frac{e^{it} + z}{e^{it} - z} f_{\epsilon,N}(e^{it}) d\m(e^{it})\Big), \quad z \in \D.
\end{equation} We note that, since the functions $f_{\epsilon,N}$ are bounded, the sequence $\{F_{\epsilon,N}\}_N$ consists of functions in $\hil^\infty$, and so in particular $F_{\epsilon,N} \in \Po^t(\mu)$. We have also the pointwise convergence $F_{\epsilon,N}(z) \to F_\epsilon(z)$ for $z \in \D$ by the convergence $f_{\epsilon,N} \to f_\epsilon$ in the sense of \thref{L:Helly}.

We establish now some estimates uniform in $N$. From \thref{L:Lemma1Est} and the estimate \eqref{E:PoissonEstimate} we deduce
\begin{equation} \label{E:FepsilonNGrowthEst}
    |F_{\epsilon,N}(z)| \leq \Big( A'(\epsilon) \frac{h(1-|z|)}{1-|z|)}\Big), \quad z \in \D
\end{equation} where $A'(\epsilon)$ is some large positive constant comparable to $A(\epsilon)$. In the case $\mu$ has form \eqref{E:MuGeneralizedDef} for some $a > 0$, take a positive integer $D = D(\epsilon)$ so large that we have 
\begin{equation}
    \label{E:FEpsilonRootDEstGen} |F_{\epsilon,N}(z)|^{t/D} \leq \exp \Big( (a/2) \frac{h(1-|z|)}{1-|z|)} \Big), \quad z \in \D.
\end{equation}
In the case that $\mu$ has the form \eqref{E:MuDef} for some $\alpha \in (-1,0]$ (and so $h$ is given by \eqref{E:CarlesonGaugeDef}), fix a small $\delta > 0$ for which we have $\alpha - 2\delta > -1$, and let $D = D(\epsilon) > 0$ be a positive integer so large that \begin{align}
    \label{E:FEpsilonRootDEst} |F_{\epsilon,N}(z)|^{t/D} &= \exp \Big( (A'(\epsilon)/D) \frac{h(1-|z|)}{(1-|z|)} \Big) \\ &= \Big(\frac{e}{1-|z|} \Big)^{A'(\epsilon)/D} \nonumber \\ & \leq \Big(\frac{e}{1-|z|} \Big)^\delta, \quad z \in \D. \nonumber
\end{align}

The fact that $S_\nu F_\epsilon$ is contained in the invariant subspace $[S_\nu]$ can be established by using the above estimates and a standard technique of absorbing the function $S_\nu F_\epsilon$ into $[S_\nu]$ piece by piece.

\begin{lem} 
    In the notation as above, we have that $S_{\nu}F_\epsilon^{k/D}$ is contained in $[S_\nu]$ for each integer $k=0, 1, \ldots, D$. In particular, $S_{\nu}F_\epsilon$ is contained in $[S_\nu]$. Moreover, we have
    \[ \limsup_{\epsilon \to 0} \|S_{\nu}F_{\epsilon}\|_{\mu,t} < \infty.\]
\end{lem}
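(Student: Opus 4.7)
The plan is to proceed by induction on $k \in \{0,1,\ldots,D\}$, establishing simultaneously that $S_\nu F_\epsilon^{k/D}\in [S_\nu]$ and that $\|S_\nu F_\epsilon^{k/D}\|_{\mu,t}$ admits a bound uniform in $\epsilon$ (for $\epsilon$ sufficiently small); the case $k=D$ then yields both assertions of the lemma. The base case $k=0$ is immediate. For the inductive step from $k$ to $k+1$, I will build $S_\nu F_\epsilon^{(k+1)/D}$ as a subsequential weak limit in $\Po^t(\mu)$ of the hybrid sequence
\[ H_N := F_{\epsilon,N}^{1/D}\cdot S_\nu F_\epsilon^{k/D}. \]

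First, each $H_N$ lies in $[S_\nu]$. Indeed, $F_{\epsilon,N}^{1/D}\in \hil^\infty$ for each fixed $N$ and hence acts as a bounded multiplier on $\Po^t(\mu)$; combining polynomial approximations to $F_{\epsilon,N}^{1/D}$ inside $\Po^t(\mu)$ (via dilation-and-truncation, which works for any $\hil^\infty$ element) with the polynomial-$S_\nu$-multiple approximations to $S_\nu F_\epsilon^{k/D}$ supplied by the inductive hypothesis places $H_N$ in $[S_\nu]$.

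The critical step is the uniform-in-$N$ bound on $\|H_N\|_{\mu,t}$, which must also be uniform in small $\epsilon$. On $\D$ the factorization $|S_\nu F_\epsilon^{k/D}| = |S_\nu|^{1-k/D}|S_\nu F_\epsilon|^{k/D} \leq |S_\nu F_\epsilon|^{k/D}$, combined with \eqref{E:SnuFepsilonGrowthDiskEstGeneral} and the defining property \eqref{E:FEpsilonRootDEstGen} of $D$, yields
\[ |H_N|^t G_{ah} \leq \exp\bigl((tC\epsilon-a/2)\,h(1-|z|)/(1-|z|)\bigr), \]
which is integrable over $\D$ as soon as $\epsilon < a/(2tC)$. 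On $\T$ I use the pointwise boundary-value estimate $|F_\epsilon(\zeta)| \leq \exp(R(\zeta)/t+1)$ almost everywhere, derived from the pointwise bound $f_{\epsilon,N}\leq R/t+1$ of \eqref{E:FepsilonNPointwiseEst} by arguing that the Helly-limit premeasure $f_\epsilon$ has Lebesgue density $\phi\leq R/t+1$ almost everywhere, and that only the absolutely continuous part of $f_\epsilon$ contributes to $|F_\epsilon|$ on $\T$ (the singular part producing only a unimodular inner factor). Combined with $|F_{\epsilon,N}^{1/D}|^t \leq e^{t/D}\exp(R/D)$, a summation of exponents delivers the almost-everywhere bound $|H_N|^t w \leq e^{t(k+1)/D}(1+w)$ on $\T$, so the $\T$-integral is at most $e^{t}(1+\|w\|_1)$, uniformly in $N$ and in small $\epsilon$. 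Given these uniform bounds, reflexivity of $\Po^t(\mu)$ (the case $t>1$ suffices, by the first remark after \thref{P:CyclicityProp}) furnishes a weakly convergent subsequence. Its limit lies in $[S_\nu]$, which is weakly closed; continuity of point evaluations on $\Po^t(\mu)$ (through the continuous embedding into a Bergman space) combined with the pointwise convergence $F_{\epsilon,N}^{1/D}\to F_\epsilon^{1/D}$ in $\D$ identifies the limit as $S_\nu F_\epsilon^{(k+1)/D}$, by the analyticity of the space. Lower semicontinuity of the norm under weak convergence transmits the uniform bound, closing the induction.

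The main obstacle is the pointwise boundary-value estimate on $|F_\epsilon|$. Because $f_\epsilon$ is only a premeasure, one must extract its Lebesgue decomposition from the Helly convergence of the distribution functions $\rho_{f_{\epsilon,N}}$, verify that the pointwise bound on $f_{\epsilon,N}$ descends to the absolutely continuous density of $f_\epsilon$, and invoke standard Poisson theory for Cauchy-type integrals of singular premeasures to dispense with the singular part. All remaining estimates on $\D$ are immediate from \eqref{E:SnuFepsilonGrowthDiskEstGeneral} and the choice of $D$.
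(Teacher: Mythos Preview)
Your overall architecture---induction on $k$, the hybrid sequence $H_N = F_{\epsilon,N}^{1/D}\, S_\nu F_\epsilon^{k/D}$, and the disk estimates via \eqref{E:SnuFepsilonGrowthDiskEstGeneral} and \eqref{E:FEpsilonRootDEstGen}---coincides with the paper's. The divergence, and the genuine gap, is in your treatment of the boundary piece $\int_\T |H_N|^t w\,d\m$.

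You propose to obtain $|F_\epsilon(\zeta)| \leq \exp(R(\zeta)/t+1)$ $\m$-a.e.\ on $\T$ by decomposing the premeasure $f_\epsilon$ into an absolutely continuous part (whose density inherits the bound $\leq R/t+1$) and a singular part (which you claim contributes only a unimodular factor). This is where the argument breaks down. The object $f_\epsilon$ is a premeasure in the sense of Appendix~\ref{sec:appendixA}: it is only finitely additive on intervals and need not extend to a countably additive Borel measure, so there is no Lebesgue decomposition available, and there is no ``standard Poisson theory for Cauchy-type integrals of singular premeasures'' to invoke. Moreover, $F_\epsilon$ as defined by the premeasure integral \eqref{E:FepsilonDef} can grow like $\exp\bigl(A'(\epsilon)\,h(1-|z|)/(1-|z|)\bigr)$ near $\T$ and need not lie in any Nevanlinna class, so non-tangential boundary values of $F_\epsilon$ are not available a priori. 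The assertion that ``only the absolutely continuous part contributes to $|F_\epsilon|$ on $\T$'' therefore has no rigorous content in this setting.

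The paper sidesteps all of this by never referring to boundary values of $F_\epsilon$ directly. Instead it introduces the convex set
\[
\mathcal{C} := \bigl\{ G \in \Po^t(\mu) : |G| \leq \exp(R/Dt + 1/D) \text{ $\m$-a.e.\ on the carrier of } w \bigr\},
\]
observes that $\mathcal{C}$ is norm-closed (hence weakly closed), and notes that each $F_{\epsilon,N}^{1/D}$ lies in $\mathcal{C}$ because $F_{\epsilon,N}\in\hil^\infty$ has honest boundary values governed by \eqref{E:FepsilonNPointwiseEst}. A uniform $\Po^t(\mu)$-bound on $\{F_{\epsilon,N}^{1/D}\}_N$ then forces the weak limit $F_\epsilon^{1/D}$ into $\mathcal{C}$, which delivers the required pointwise bound on the $\Po^t(\mu)$-boundary trace of $F_\epsilon^{1/D}$ without any structural analysis of $f_\epsilon$. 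This is the idea missing from your proposal: transfer the pointwise inequality through weak closedness of a convex constraint set inside $\Po^t(\mu)$, rather than attempting to compute it from the premeasure.
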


\begin{proof}
    We first carry out the proof in the context of measures of the form \eqref{E:MuGeneralizedDef}, the other case being similar and treated afterwards.
    The proof will proceed by induction on $k$, with the base case $k=0$, corresponding to $S_\nu \in [S_\nu]$, being trivial. Let us then assume that $k < D$ and that the claim holds for $k$. We will show that it holds also for $k+1$. 

    Using the inequalities $|S_\nu(z)| \leq |S_\nu(z)|^{k/D}$, $C \epsilon tk/D < a/2$ (which holds for sufficiently small $\epsilon$) and \eqref{E:SnuFepsilonGrowthDiskEstGeneral}, we obtain, for $z \in \D$, the estimate
    \begin{align*}
        |S_{\nu}(z) F_\epsilon^{k/D}(z)|^t &\leq |S_{\nu}(z) F_\epsilon(z)|^{tk/D} \\
        &\leq \exp \Big( (C \epsilon tk/D)\frac{h(1-|z|)}{1-|z|} \Big), \\
        & \leq \exp \Big( (a/2) \frac{h(1-|z|)}{1-|z|} \Big) \\
        &= G_{ah}(z)^{-1/2}.
    \end{align*}
    Combined with \eqref{E:FEpsilonRootDEstGen}, we conclude that for small $\epsilon$ we have the bound 
\begin{equation}
\label{E:SnuFepsDbound} |S_{\nu}(z) F_\epsilon^{k/D}(z)F_{\epsilon,N}^{1/D}(z)|^t G_{ah}(z) \leq 1 
\end{equation} which gives uniform norm boundedness of the sequence $\{S_\nu F^{k/D}_\epsilon F^{1/D}_{\epsilon,N}\}_N$ in $\mathcal{L}^t(G_{ah}dA)$. The norm bound is independent of $\epsilon$, as long as $\epsilon$ is sufficiently small. 
Let now $C = \{ z \in \T : w(z) > 0 \}$ be the natural carrier set of $w$, and introduce the set \[\mathcal{C} := \{G \in \Po^t(\mu) : |G| \leq \exp(R/Dt + 1/D) \, \m\text{-a.e on } C\}.\] It is easy to see that $\mathcal{C}$ is convex. It is also closed in the norm topology of $\Po^t(\mu)$, since if $\{G_n\}_n$ is a sequence in $\mathcal{C}$ converging in norm to $G$, then a subsequence $\{G_{n_k}\}_k$ may be extracted which converges pointwise to $G$ on a set of full $\m$-measure in the carrier $C$. It follows that $\mathcal{C}$ is weakly closed, by basic functional analysis. From the definition of $F_{\epsilon,N}$ in \eqref{E:FEpsilonNdef} and the inequality \eqref{E:FepsilonNPointwiseEst} we deduce that $F^{1/D}_{\epsilon,N} \in \mathcal{C}$. Keeping in mind also that $R = \log^+(1/w)$, and that we have the estimate \eqref{E:FEpsilonRootDEstGen}, we deduce easily that $\{F^{1/D}_{\epsilon, N}\}_N$ is a norm bounded sequence in $\Po^t(\mu)$.  Therefore $F^{1/D}_{\epsilon} \in \mathcal{C}$, since $F^{1/D}_{\epsilon}$ is a pointwise limit in $\D$ of $\{F^{1/D}_{\epsilon, N}\}_N$, and any weakly convergent subsequence of $\{F^{1/D}_{\epsilon, N}\}_N$ must necessarily converge to an element of $\mathcal{C}$. On $\T$ we have $|S_{\nu}| = 1$, and it follows from the membership of $F^{1/D}_{\epsilon,N}$ and $F^{1/D}_{\epsilon}$ in $\mathcal{C}$ that on the carrier set $C$ of $w$ we have the pointwise inequalities 
    \begin{align*}
        |S_{\nu}F_\epsilon^{k/D} F^{1/D}_{\epsilon,N}|^tw &\leq e^{t} \cdot \exp(R(k+1)/D)w \\
        &\leq e^{t} (1+w^{-(k+1)/D})w \\
        &\leq e^{t} (1 + 2w) \in \mathcal{L}^1(d\m).
    \end{align*}
    By combining the above estimate with \eqref{E:SnuFepsDbound}, we obtain 
    \begin{equation}
        \label{E:UniformNormFepsilonEstimate}
        \sup_{N} \| S_{\nu}F^{k/D}_{\epsilon} F_{\epsilon, N}^{1/D}\|_{\mu,t} < \infty
    \end{equation} with the estimate being uniform also in $\epsilon$ small enough. Since $F^{1/D}_{\epsilon, N} \in \hil^\infty$, and by induction hypothesis $S_{\nu}F^{k/D}_{\epsilon}$ is contained in $[S_\nu]$, by weak compactness of $\Po^t(\mu)$ (recall $t > 1$) we obtain that $S_{\nu}F^{(k+1)/D}_{\epsilon}$ is also contained in $[S_\nu]$, with norm $\|S_{\nu}F^{(k+1)/D}_{\epsilon}\|_{\mu,t}$ not larger than the supremum on the left-hand side in \eqref{E:UniformNormFepsilonEstimate}.

    The proof in the case \eqref{E:MuDef} and $\alpha \in (-1,0]$ is similar. By \eqref{E:SnuFepsilonGrowthDiskEstGeneral}, and an argument similar to the one given at the beginning of this proof, we obtain for $\epsilon$ small enough the estimate
    \[ |S_{\nu}(z) F_\epsilon^{k/D}(z)|^t \leq \Big(\frac{e}{1-|z|}\Big)^\delta, \quad z \in \D.\] Hence by \eqref{E:FEpsilonRootDEst} it holds that 
    \[ |S_{\nu}(z) F_\epsilon^{k/D}(z)F_{\epsilon,N}^{1/D}(z)|^t (1-|z|)^\alpha \leq 3 \cdot (1-|z|)^{\alpha - 2\delta} \in \mathcal{L}^1(dA),
    \] where the last inclusion holds as a consequence of the inequality $\alpha - 2\delta > -1$. Thus we have also in this case the corresponding norm boundedness of $\{S_\nu F^{k/D}_\epsilon F^{1/D}_{\epsilon,N}\}_N$ in $\mathcal{L}^t(dA_\alpha)$. The rest of the proof is the same as in the case of measures of the form \eqref{E:MuGeneralizedDef}.
\end{proof}

According to the discussion in Section \ref{S:interludiumSubsec}, we have now shown that consistency of the linear program \eqref{E:LinearProgram} implies the cyclicity of the singular inner function $S_{\nu}$ in $\Po^t(\mu)$. Indeed, we conclude from that discussion and the above lemma that for small $\epsilon > 0$ the norm bounded family $\{S_\nu F_\epsilon\}_\epsilon$ of members of $[S_\nu]$ has the constant function $1$ as a weak cluster point. So $1 \in [S_\nu]$, and $S_\nu$ is cyclic.

\subsection{Necessity of consistency for cyclicity}

According to what was said in the last paragraph of Section \ref{S:KorenblumProgramSubsec}, in order to complete the proofs of \thref{T:CyclicityMainTheorem} and \thref{P:CyclicityProp}, we now need to show that if the linear program \eqref{E:LinearProgram} is inconsistent, then there exists an associated set $E$ for which $\nu(E) > 0$. 

\subsubsection{The Inconsistency Inequality} \label{S:InconsistencyIneqSec}

We start the necessity proof in the same way as Korenblum started his proof in \cite{korenblum1977beurling} by deriving an inequality from the inconsistency of the linear program. 

Note that the first two inequalities in \eqref{E:LinearProgram} may be written as a single inequality in the following way:
\[
-\nu_0(I_{k,l}) + \sum_{s=k}^l x_s \, \leq \min \big(-\nu_0(I_{k,l}) + \mathcal{M}(A \cdot h, R|I_{k,l}), \, \epsilon \cdot h(|I_{k,l}|) \big).
\]
Only for a moment, let us denote the right-hand side of this inequality by $b_{k,l}$, and think of $y_s = -\nu_0(I_s) + x_s$ as the unknowns in a new linear program. Note that $\sum_{s=1}^N y_s = 0$ if and only if $\sum_{s=1}^N x_s = 0$, since $\nu_0$ is normalized and additive. The proof of the following duality lemma for linear programs appears in \cite[Lemma 7.7]{hedenmalmbergmanspaces}, where Korenblum's work on cyclic vectors is presented. The lemma appears implicitly in the work of Korenblum in \cite{korenblum1977beurling}.
\begin{lem}
    \thlabel{L:DualityLemma1NecProof}
    Let $N$ be a fixed positive integer, and $b_{k,l}$ be real numbers, $1 \leq k \leq l \leq N$.
The system of inequalities \[ \sum_{s=k}^l y_s \leq b_{k,l}\] subject to the constraint $\sum_{s=1}^N y_s = 0$ is solvable if and only if \[\sum_{i=1}^m b_{k_i, k_{i+1}-1} \geq 0\]
for every increasing sequence $\{k_i\}_{i=1}^{m-1}$ of integers,  $k_i \in [1, \ldots,  N]$, where $k_1 = 1$ and $k_m = N+1$.
\end{lem}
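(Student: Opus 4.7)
The plan is to cast the system as a potential problem on a weighted directed acyclic graph, which reduces the claim to a standard shortest-path duality. I will introduce the partial sums $Y_l := \sum_{s=1}^l y_s$ for $0 \leq l \leq N$, so that $Y_0 = 0$ by convention, the equality constraint $\sum_{s=1}^N y_s = 0$ becomes $Y_N = 0$, and each inequality $\sum_{s=k}^l y_s \leq b_{k,l}$ rewrites as $Y_l - Y_{k-1} \leq b_{k,l}$. The existence of a solution $\{y_s\}$ is therefore equivalent to the existence of real numbers $Y_0 = Y_N = 0$ and $Y_1, \ldots, Y_{N-1}$ satisfying these inequalities. Consider the DAG on vertices $\{0, 1, \ldots, N\}$ with an edge from $k-1$ to $l$ of weight $b_{k,l}$ for every $1 \leq k \leq l \leq N$. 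Directed paths $0 = v_0 < v_1 < \cdots < v_m = N$ correspond bijectively to sequences $1 = k_1 < k_2 < \cdots < k_{m+1} = N+1$ via $k_i := v_{i-1} + 1$, with path weight equal to $\sum_{i=1}^m b_{k_i, k_{i+1}-1}$.

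For necessity, given any feasible $\{Y_l\}$, telescoping the inequalities along such a path yields
\[
0 \, = \, Y_N - Y_0 \, = \, \sum_{i=1}^m (Y_{v_i} - Y_{v_{i-1}}) \, \leq \, \sum_{i=1}^m b_{k_i, k_{i+1}-1},
\]
which is exactly the asserted partition inequality.

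For sufficiency, I will set $Y_N := 0$ and, for each $v \in \{1, \ldots, N-1\}$, define $-Y_v$ to be the weight of a shortest directed path from $v$ to $N$ in the DAG (this is a finite minimum over a finite family, so well-defined). For any edge $(k-1) \to l$ with $1 \leq k \leq l \leq N-1$, concatenating it with the shortest path from $l$ to $N$ gives one specific path from $k-1$ to $N$ of weight $b_{k,l} - Y_l$, hence $-Y_{k-1} \leq b_{k,l} - Y_l$, i.e., $Y_l - Y_{k-1} \leq b_{k,l}$. For constraints with $l = N$, directly comparing with the edge $(k-1) \to N$ of weight $b_{k,N}$ gives $-Y_{k-1} \leq b_{k,N}$. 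The crucial case is $k=1$: one must check $Y_l \leq b_{1,l}$ for $1 \leq l \leq N-1$, as well as $0 \leq b_{1,N}$. This is precisely where the partition hypothesis enters: any directed path $0 \to l \to \cdots \to N$ has nonnegative total weight by assumption, so the tail weight from $l$ to $N$ is bounded below by $-b_{1,l}$; taking the infimum over such tails gives $-Y_l \geq -b_{1,l}$. The case $(k,l) = (1,N)$ is the trivial one-block partition.

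The main obstacle is not analytic but organizational: matching the DAG-path formalism to the partition language of the statement, and keeping track of boundary indices. Once that bijection is fixed, the argument is a clean instance of the classical principle that a weighted DAG admits a potential satisfying the edge constraints if and only if it contains no negative cycle, where here the only cycle-closing edge is the artificial zero-weight edge that enforces $Y_N = Y_0$.
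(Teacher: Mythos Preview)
Your proof is correct. The paper itself does not prove this lemma but merely cites \cite[Lemma 7.7]{hedenmalmbergmanspaces}, so there is no internal argument to compare against; your shortest-path/potential construction on the DAG of partial sums is a standard and clean way to handle difference-constraint systems of this type, and matches in spirit the extremal construction in the cited reference.
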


Note that the sequence $\{k_i\}_{i=1}^m$ in the statement of the lemma partitions the set of integers $[1, \ldots, N]$ into $m-1$ "intervals" $[k_i, k_{i+1}-1]$. Such partitions are called \textit{simple coverings} in \cite{hedenmalmbergmanspaces} and \cite{korenblum1977beurling}. A simple covering corresponds to a covering of the circle $\T$ by a finite number of half-open intervals $\{J_i\}_{i=1}^{m-1}$ as in \eqref{E:IklDef}, where we use the notation \[J_i = I_{k_i, k_{i+1}-1}.\] Then by our above discussion and \thref{L:DualityLemma1NecProof} we see that inconsistency of the system \eqref{E:LinearProgram} implies that there exists $\epsilon > 0$ such that to every choice of $A$ there corresponds a covering $\{J_i\}_i$ of $\T$ consisting of half-open intervals, which satisfies
\begin{equation}
    \label{E:PreIncIneq}\sum_{i} \min \big(-\nu_0(J_i) + \mathcal{M}(A \cdot h, R|J_i), \, \epsilon \cdot h(|J_i|) \big) < 0.
\end{equation} Denoting by $J'_i$ those intervals among $J_i$ for which \[\min \big(-\nu_0(J_i) + \mathcal{M}(A \cdot h, R|J_i), \, \epsilon \cdot h(|J_i|) \big) = -\nu_0(J_i) + \mathcal{M}(A \cdot h, R|J_i)\] and by $J''_i$ the remaining ones, by rearranging \eqref{E:PreIncIneq} we obtain  our \textit{Inconsistency Inequality}, which reads 

\begin{equation}
    \label{E:InconsistencyIneq}
    \sum_{i} \mathcal{M}(A \cdot h, R|J'_i)  + \sum_{i} \epsilon \cdot h(|J''_i|) < \sum_{i} \nu_0(J'_i).
\end{equation}

We emphasize that $\epsilon$ remains now fixed for the rest of the proof, and that the covering $\{J_i\}_i$ depends on the choice of $A$, but that we suppress this in our notation for now. Note also that the left-hand side of the inequality is a positive quantity. In fact, this quantity stays away from $0$.

\begin{lem}
    \thlabel{L:InconsistencyLowerBoundLemma} Assume that $R > 0$ on a subset of positive Lebesgue measure on $\T$. Then we have 
    \[ \inf_{A : A \geq \epsilon} \, \sum_{i} \mathcal{M}(A \cdot h, R|J'_i) + \sum_{i} \epsilon \cdot h(|J''_i|) > c_0\] for some positive number $c_0$.
\end{lem}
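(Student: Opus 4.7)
The plan is to argue by contradiction. Suppose the infimum is zero. Then we may pick a sequence $A_n \geq \epsilon$ and corresponding coverings $\{J_{i,n}\}_i$, split into the distinguished subfamilies $\{J'_{i,n}\}$ and $\{J''_{i,n}\}$, such that
\[ S_n := \sum_{i} \mathcal{M}(A_n \cdot h, R|J'_{i,n}) + \epsilon \sum_i h(|J''_{i,n}|) \xrightarrow[n \to \infty]{} 0.\]
Both sums are non-negative, so each tends to $0$ separately. The strategy is to derive from this a set of large Lebesgue measure on which $R$ integrates to something arbitrarily small, which contradicts the hypothesis that $R > 0$ on a set of positive Lebesgue measure.

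First I would control the geometry coming from the $J''_{i,n}$ pieces. Since $h$ is a gauge function with $h(0)=0$, continuous and subadditive, and since the sum $\sum_i h(|J''_{i,n}|)$ tends to $0$, subadditivity gives $h(|\bigcup_i J''_{i,n}|) \leq \sum_i h(|J''_{i,n}|) \to 0$, and then continuity of $h$ at $0$ yields $|\bigcup_i J''_{i,n}| \to 0$. Next, for each $i$ and $n$ I would pick a family $\mathcal{U}_{i,n}$ of open intervals in $J'_{i,n}$ achieving the infimum in \eqref{E:HausdorffFuncDef} up to an error summable in $i$ and vanishing in $n$. From $A_n \geq \epsilon$ and the non-negativity of the $R$-integral term, it follows that $\sum_{i}\sum_{\ell \in \mathcal{U}_{i,n}} h(|\ell|) \leq S_n/\epsilon + o(1) \to 0$, and then the same subadditivity argument shows that the total Lebesgue measure $|\bigcup_{i}\bigcup_{\ell \in \mathcal{U}_{i,n}} \ell|$ also tends to $0$.

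Setting $U_n := \bigl(\bigcup_i J''_{i,n}\bigr) \cup \bigl(\bigcup_{i} \bigcup_{\ell \in \mathcal{U}_{i,n}} \ell\bigr)$, I would then have $|U_n| \to 0$. Since the $J_{i,n}$ cover $\T$, we have $\T \setminus U_n \subseteq \bigcup_i \bigl( J'_{i,n} \setminus \bigcup_{\ell \in \mathcal{U}_{i,n}} \ell \bigr)$, and therefore
\[ \int_{\T \setminus U_n} R \, d\m \leq \sum_i \int_{J'_{i,n} \setminus \bigcup_{\ell \in \mathcal{U}_{i,n}} \ell} R \, d\m \leq S_n + o(1) \to 0.\]
To close the contradiction I would use the hypothesis that $R > 0$ on a subset of positive Lebesgue measure: there exists $k$ with $|\{R > 1/k\}| > 0$, and for $n$ large $|\{R > 1/k\} \setminus U_n| \geq |\{R > 1/k\}|/2$, which gives
\[ \int_{\T \setminus U_n} R \, d\m \geq \frac{|\{R > 1/k\}|}{2k} > 0,\]
contradicting the previous display.

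The only delicate point is matching up the two sums on coverings produced by the Hausdorff functional and the subadditivity bound; this is routine provided the near-optimal collections $\mathcal{U}_{i,n}$ are chosen with errors summable over $i$ (which is fine because the covering is finite for each $n$). The essential mechanism is that the constraint $A \geq \epsilon$ prevents the Hausdorff-functional term from becoming small by putting enormous weight on very short intervals, and thus forces both the $h$-cost and the missed-mass $R$-cost to become small simultaneously.
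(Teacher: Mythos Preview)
Your proof is correct and follows essentially the same approach as the paper's: assume the infimum is zero, pick near-optimal covers for the Hausdorff functionals, use $A\geq\epsilon$ to force both the $h$-sums over the covering intervals and the residual $R$-integrals to vanish, and reach a contradiction with $R>0$ on a set of positive measure. The only cosmetic differences are that the paper bounds $\sum_k|\ell_k|$ directly via $|\ell|\leq h(|\ell|)/h(1)$ rather than through subadditivity of $h$, and it phrases the final contradiction as $\int_\T R\,d\m=0$ rather than via a level set $\{R>1/k\}$; your level-set argument is actually a bit cleaner since $R$ need not be integrable.
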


\begin{proof}
Let us assume that by varying $A$ it is possible to have the left-hand side above converge to zero. We will show $R$ vanishes almost everywhere with respect to the Lebesgue measure $d\m$.

By the definition in \eqref{E:HausdorffFuncDef}, for each interval $J'_i$ there exists a choice of an open set $U'_i \subset J'_i$ which is a union of a family of open intervals $\{\ell_{i,k}\}_k$ for which $\int_{J'_i \setminus U'_i} R \, d\m + \sum_{ k } A\cdot h(|\ell_{i,k}|)$ is as close as we wish to $\mathcal{M}(A\cdot h, R|J'_i)$. Let 
\[J'_A = \bigcup_i J'_i, \quad U'_A = \bigcup_i U'_i \] and let $\{\ell_k\}_k$ be the union over $i$ of the families $\{\ell_{i,k}\}_k$, so that $U'_A = \bigcup_k \ell_k$. Since $A \geq \epsilon$, by our assumption we may arrange our choices so that 
\begin{equation}
\label{E:ConvToZeroQuantity}
\int_{J'_A \setminus U'_A} R \, d\m + \epsilon \cdot \Big( \sum_{k} h(|\ell_k|) + \sum_{i} h(|J''_i|) \Big) \to 0.
\end{equation}  Since $\epsilon$ is fixed, we conclude that \[\sum_k h(|\ell_k|) \to 0, \quad \sum_i h(|J''_i|) \to 0,\] and in particular we must have \[|U'_A| \leq \sum_k |\ell_k| \to 0, \quad \sum_{i} |J''_i| \to 0.\] Since $\T = (J'_A \setminus U'_A) \cup U'_A \cup  \Big( \cup_i J''_i \Big)$, we see that the Lebesgue measures of $J'_A \setminus U'_A$ must grow to full $\m$-measure in $\T$. Since \eqref{E:ConvToZeroQuantity} implies that $\int_{J'_A \setminus U'_A} R \, d\m \to 0$, and since $R$ is non-negative, we conclude that $\int_\T R \, d\m = 0$. So $R \equiv 0$ almost everywhere with respect to $d\m$.
\end{proof}

We mention again that the case $R = \log^+(1/w) = 0$ is irrelevant for us, as it is equivalent to $w \geq 1$ in which case $\int_\T \log w \, d\m > -\infty$, and so the space $\Po^t(\mu)$ has the simple structure explained in Section \ref{S:MainResSect}. We may therefore assume that $R \neq 0$, and that the lower bound in \thref{L:InconsistencyLowerBoundLemma} holds.

\subsubsection{Analysis of the inequality}
We have seen that inconsistency of \eqref{E:LinearProgram} leads us to the conclusion that there exist an $\epsilon > 0$ such that to each choice of $A$ larger than $\epsilon$, there correspond two families of disjoint half-open intervals $\{J'_i\}_i$ and $\{J''_i\}_i$, which together cover the unit circle $\T$, and for which the inequality \eqref{E:InconsistencyIneq} holds, with the left-hand side being bounded away from $0$.

Let $E_A$ be the union of closures of the intervals $J'_i$:
\[ E_A := \bigcup_i \text{clos}(J'_i).\] Since $\{J'_i\}_i$ is a finite family, the set $E_A$ is closed and differs from the union $\bigcup_i J'_i$ only by finitely many points. Using the notation from the proof of \thref{L:InconsistencyLowerBoundLemma}, the inconsistency inequality \eqref{E:InconsistencyIneq} implies that there exist open sets $U'_i = \bigcup_{k} \ell_{i,k}$ contained in $J'_i$ for which
\[ c_0 < \sum_{J'_i} \Bigg( \int_{J'_i \setminus U'_i} R \, d\m + \sum_{k} A \cdot h(|\ell_{i,k}|) \Bigg) + \sum_{i} \epsilon \cdot h(|J''_i|) < \sum_{i} \nu_0(J'_i).\]
Since $\nu_0 = \nu - \nu(\T)d\m \leq \nu$, $\nu$ is a non-negative measure, and $\{J'_i\}_i$ is a disjoint family of intervals contained in $E_A$, the right-most quantity above is dominated by $\nu(E_A)$. Using still the notation from the proof of \thref{L:InconsistencyLowerBoundLemma} and setting \[E'_A = E_A \setminus U'_A,\] we obtain
\begin{equation}
 \label{E:RefinedInconsistencyIneq}
    c_0 < \int_{E'_A} R \, d\m + \sum_{k} A \cdot h(|\ell_k|) + \sum_{i} \epsilon \cdot h(|J''_i|) < \nu(E_A).
\end{equation}

The following observations are consequences of the second of the above inequalities.

\begin{itemize}
    \item The measure $\nu$ is non-negative, so $\nu(E_A) \leq \nu(\T)$. As $A \to \infty$, we must therefore have $\sum_{k} |\ell_k| \to 0$. Since $U'_A = \cup_k \ell_k$, we deduce that 
    \begin{equation}
        \label{E:EaE'relation}
        E_A = E'_A + U'_A
    \end{equation}
    where $|U'_A| \to 0$ as $A \to \infty$. In particular, it follows that \[ \lim_{A \to \infty} |E_A| - |E'_A| = 0.\]
    \item The family of complementary intervals to $E_A$ is $\{\text{int}(J''_i)\}_i$, the interiors of the intervals $\{J''_i\}_i$, while the complementary intervals of $E'_A$ consists of the union of the families $\{\text{int}(J''_i)\}_i$ and $\{\ell_k\}_k$. From \eqref{E:RefinedInconsistencyIneq} we deduce that
    \begin{equation}
        \label{E:CarlesonSupBBoundEEPrime}
        \sup_{A : A \geq \epsilon}  \Big( \sum_{k}  h(|\ell_k|) + \sum_{J''_i}  h(|\text{int}(J''_i)|) \Big)  < \infty.
    \end{equation} 
\end{itemize}

\subsubsection{Compactness properties of the class $\BCh$}

For a closed subset $E$ of $\T$ and $\delta > 0$, let $E^\delta$ be the usual open $\delta$-neighbourhood of $E$, consisting of all points at distance less than $\delta$ from $E$:
\[ E^\delta := \{ z \in \T : \dist{z}{E} < \delta \}.\]

\begin{lem}
    \thlabel{L:FinalLemmaNecessity} Let $\{E_n\}_n$ be a sequence of sets in $\BCh$. Let $\{\ell_{n,k}\}_k$ be the sequence of maximal open intervals in $\T$ complementary to $E_n$, and assume that 
    \begin{equation}
        \label{E:UniformEnCarlesonBound}
        \sup_n \sum_{k} h(|\ell_{n,k}|) < \infty.
    \end{equation}
    Assume further that for each $n$ the sequences $\{\ell_{n,k}\}_k$ are ordered such that the lengths of the intervals are decreasing in $k$:
    \[ |\ell_{n,1}| \geq |\ell_{n,2}| \geq \ldots,\]
    Then there exists a set $E_\infty \in \BCh$ and a subsequence $\{E_{n_j}\}_j$ such that $E_{n_j} \to E_\infty$ in the following sense:
    \begin{enumerate}[(i)]
    \item $\lim_{j \to \infty} |E_{n_j}| = |E_\infty|$,
    \item for every $\delta > 0$ we have that $E_\infty \subset E^\delta_{n_j}$ and $E_{n_j} \subset E^\delta_\infty$ for all sufficiently large $j$.
    \item For each maximal open interval $\ell_k$ complementary to $E_\infty$, we have that $\ell_{n_j,k}$ converge to $\ell_k$ as $j \to \infty$, in the sense that the endpoints of $\ell_{n_j,k}$ converge to the endpoints of $\ell_k$.
    \end{enumerate}
\end{lem}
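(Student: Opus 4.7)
My plan is a diagonal compactness argument, with all the quantitative content packaged into a uniform tail estimate on interval lengths derived from the gauge conditions (R1)--(R2). Since the lengths are ordered decreasingly, $k \cdot h(|\ell_{n,k}|) \leq \sum_{j \leq k} h(|\ell_{n,j}|) \leq M$ where $M$ denotes the supremum in \eqref{E:UniformEnCarlesonBound}, so $|\ell_{n,k}| \leq h^{-1}(M/k) =: \epsilon_k$, and $\epsilon_k \to 0$ by (R2). Since $x/h(x)$ is increasing in $x$ (the contrapositive of (R1)) and tends to $0$ as $x \to 0$ (again by (R2)), for $k > K$ we have $|\ell_{n,k}|/h(|\ell_{n,k}|) \leq \epsilon_{K+1}/h(\epsilon_{K+1}) = (K+1)\epsilon_{K+1}/M$, hence
\[ \sum_{k>K} |\ell_{n,k}| \leq \frac{(K+1)\epsilon_{K+1}}{M} \sum_{k>K} h(|\ell_{n,k}|) \leq (K+1)\epsilon_{K+1} \to 0 \]
uniformly in $n$ as $K \to \infty$.

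Next, for each fixed $k$ the endpoints of $\ell_{n,k}$ live in the compact set $\T$, so a standard diagonal extraction yields a subsequence $\{n_j\}$ along which the endpoints of $\ell_{n_j,k}$ converge to some $a_k, b_k \in \T$ for every $k$. Let $\ell_k$ denote the open arc from $a_k$ to $b_k$, interpreted as the empty set if $a_k = b_k$, and set $E_\infty := \T \setminus \bigcup_k \ell_k$. Continuity of $h$ together with Fatou gives $\sum_k h(|\ell_k|) \leq \liminf_j \sum_k h(|\ell_{n_j,k}|) \leq M$; since the maximal complementary arcs of $E_\infty$ are the connected components of $\bigcup_k \ell_k$, subadditivity of $h$ transfers the bound, so $E_\infty \in \BCh$.

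For (i), write $|E_{n_j}| = 1 - \sum_k |\ell_{n_j,k}|$; the uniform tail bound combined with the pointwise convergence $|\ell_{n_j,k}| \to |\ell_k|$ gives $|E_{n_j}| \to |E_\infty|$. For (ii), fix $\delta > 0$ and choose $K$ with $\epsilon_{K+1} < \delta/2$; any point in $E_\infty \setminus E_{n_j}^\delta$ would have to lie well inside some $\ell_{n_j,k}$ with $k \leq K$, and the endpoint convergence of the finitely many such arcs rules this out for large $j$, with the symmetric containment handled analogously. The main obstacle is (iii): two limit arcs $\ell_{k_1}, \ell_{k_2}$ may share a common endpoint and amalgamate into a single maximal complementary arc of $E_\infty$, in which case no individual sequence $\ell_{n_j,k}$ converges to this amalgamated arc. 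I would resolve (iii) by re-indexing the maximal arcs of $E_\infty$ (the connected components of $\bigcup_k \ell_k$) in decreasing order of length, interpreting the statement as the assertion that each maximal arc of $E_\infty$ is the union of limits of a finite cluster of coalescing $\ell_{n_j,k}$, the finiteness of the cluster being enforced by the tail bound. This combinatorial bookkeeping around possible amalgamations is the only minor obstacle; the analytic content lies entirely in the tail bound above.
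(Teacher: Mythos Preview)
Your argument is correct and in fact cleaner than the paper's in one respect. Where the paper establishes (i) by a contradiction argument (assuming $\sum_k|\ell_k|<L$ and deriving a violation of \eqref{E:UniformEnCarlesonBound} via the inequality $\sum_{k\geq K}h(|\ell_{n,k}|)\geq \frac{h(|\ell_{n,K}|)}{|\ell_{n,K}|}\sum_{k\geq K}|\ell_{n,k}|$), you extract the explicit uniform tail bound $\sum_{k>K}|\ell_{n,k}|\leq (K+1)\,h^{-1}(M/(K+1))\to 0$ directly from (R1)--(R2). This is a genuine simplification: it gives (i) immediately and also streamlines (ii), where the paper has to invoke the auxiliary fact $\lim_K\lim_n|\ell_{n,K}|=0$.

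Your worry about (iii), however, is misplaced, and the proposed re-indexing workaround is unnecessary. Amalgamation cannot occur: if $\ell_{k_1}$ and $\ell_{k_2}$ share an endpoint $p$, then $p\in E_\infty$. Indeed, $p$ is a limit of endpoints of $\ell_{n_j,k_1}$, and these lie in $E_{n_j}$; if $p$ fell into some open limit arc $\ell_{k_0}$ then for large $j$ the arc $\ell_{n_j,k_0}$ would contain a point of $E_{n_j}$, contradicting that $\ell_{n_j,k_0}$ is complementary to $E_{n_j}$. Hence the nondegenerate $\ell_k$ are pairwise disjoint open arcs whose endpoints lie in $E_\infty$, so they are precisely the maximal complementary arcs of $E_\infty$, and (iii) is immediate from the construction. (The paper dispenses with this point in a single sentence; you are right that it merits a line of justification.)
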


The lemma is a direct generalization to sets of positive Lebesgue measure of \cite[Lemma 3.1.1]{korenblum1977beurling}. See also \cite[Lemma 7.6]{hedenmalmbergmanspaces}. In \cite{bergqvist2024distributing} a similar compactness statement appears in the context of unions of dyadic cubes. The proofs of these statements are all similar. We provide one for completeness.

\begin{proof}[Proof of \thref{L:FinalLemmaNecessity}]
    By an initial passing to a subsequence, we may assume that we have the convergence of Lebesgue measures of $E_n$, which we may express as the convergence of Lebesgue measures of the complements: 
    \begin{equation}
        \label{E:ComplementNormConvergence}
        \lim_{n \to \infty} \sum_{k} |\ell_{n,k}| = L \geq 0.
    \end{equation} 

    For $k=1, 2, \ldots$, we successively attempt to pass to a subsequence of $\{E_n\}_n$ to ensure that (after relabeling) $\ell_{n,k}$ converges in the sense of $(iii)$ above to some interval $\ell_k$ of positive length. Our process stops for some finite $K$ if $\lim_{n \to \infty} |\ell_{n,K}| = 0$, and then we set $\ell_{k} = \varnothing$ for $k \geq K$. Else, the process goes on indefinitely. In the latter case, by the usual diagonal subsequence argument we ensure that $\ell_{n,k}$ converges to $\ell_k$ as $n \to \infty$, for all $k$. We set in either case
    \[ E_\infty := \T \setminus \Big( \bigcup_{k} \ell_k \Big),\] with the appearing union being finite in the former case. Note that we have already ensured $(iii)$. 
    
    Since for each positive integer $M$ it holds that \[ \sum_{k \leq M} h(|\ell_k|) = \lim_{n \to \infty} \sum_{k \leq M} h(|\ell_{n,k}|) \leq \sup_n \sum_k h(|\ell_{n,k}|),\] by letting $M \to \infty$ we conclude from \eqref{E:UniformEnCarlesonBound} that $E_\infty \in \BCh$. 

    To establish that $\lim_{n \to \infty} |E_n| = |E_\infty|$ we may equivalently establish that
    \begin{equation}
        \label{E:IntermStepL1Conf}
        \sum_{k} |\ell_k| = \lim_{n \to \infty} \sum_k |\ell_{n,k}| = L.
    \end{equation}
    Fatou's lemma immediately shows that the left-hand side in \eqref{E:IntermStepL1Conf} is at most as large as the right-hand side, which equals $L$ by \eqref{E:ComplementNormConvergence}. As the left-hand side is non-negative, we are done if $L = 0$. Else, we assume for the sake of contradiction that $\sum_k |\ell_k| = L - \alpha > 0$ for some $\alpha > 0$.  Note that for every $K$, we have \begin{align}
     \label{E:InequalitiesForContradiction}
        \sum_{k \geq K} h(|\ell_{n,k}|) &= \sum_{k \geq K} \frac{h(|\ell_{n,k}|)}{|\ell_{n,k}|}|\ell_{n,k}| \\
        &\geq \frac{h(|\ell_{n,K}|)}{|\ell_{n,K}|} \sum_{k \geq K} |\ell_{n,k}|. \nonumber
    \end{align} The last inequality follows from our ordering assumption on $\{|\ell_{n,k}|\}_k$ and the assumption (R1) in Section~\ref{S:OptimProbSubsec} stating that $h(x)/x$ is decreasing in $x$. If our process terminated at $k = K$, then $\lim_{n \to \infty} |\ell_{n,K}| = 0$ and $\sum_{k < K} |\ell_k| = L - \alpha$. Since (R2) states that $\lim_{x \to 0} h(x)/x = \infty$, the bound \eqref{E:UniformEnCarlesonBound} and the inequality in \eqref{E:InequalitiesForContradiction} together imply that $\sum_{k \geq K} |\ell_{n,k}| \to 0$ as $n \to \infty$. But then
\begin{align*}
L &= \lim_{n \to \infty} \sum_{k < K} |\ell_{n,k}| + \sum_{k \geq K} |\ell_{n,k}| \\ &=  \sum_{k < K} |\ell_k| \\ &= L - \alpha < L,    
\end{align*} and we have reached a contradiction. If the process never terminated, the contradiction is reached similarly. Now, for each $K$ we have $\lim_{n \to \infty} |\ell_{n,K}| = |\ell_K| > 0$, and also
    \[ L = \sum_{k < K} |\ell_{k}| + \lim_{n \to \infty}  \sum_{k \geq K} |\ell_{n,k}|.\] Since $\sum_{k < K} |\ell_k| \leq L - \alpha$, it follows from \eqref{E:ComplementNormConvergence} that for every $K$ we have \[\lim_{n \to \infty} \sum_{k \geq K} |\ell_{n,k}| \geq \alpha.\] Letting $n \to \infty$ in $\eqref{E:InequalitiesForContradiction}$ we deduce
    \[ \sup_n \sum_{k} h(|\ell_{n,k}|) \geq \frac{h(|\ell_K|)}{|\ell_K|}\alpha\] Since $|\ell_K| \to 0$ as $K \to \infty$, we obtain again a contradiction, this time to \eqref{E:UniformEnCarlesonBound}. We have thus  established \eqref{E:IntermStepL1Conf}, and so $(i)$ holds.

    For an open interval $\ell$ in $\T$ and $\delta \leq |\ell|/2$, let $\ell(\delta)$ be the closed interval with same midpoint as $\ell$ but of length $|\ell| - 2\delta$. Note that $\ell(\delta)$ degenerates to a point if $\delta = |\ell|/2$. If $\delta > |\ell|/2$, let $\ell(\delta)$ be the empty set. With this notation, we have 
    \[ \T \setminus E^\delta_\infty = \bigcup_k \ell_k(\delta)\] and 
    \[ \T \setminus E^\delta_n = \bigcup_k \ell_{n,k}(\delta).\]
    Fix $\delta > 0$. To show that $E_n \subset E^\delta_\infty$ for large $n$, we may equivalently show that \[\bigcup_k \ell_{n,k} \supset \bigcup_k\ell_k(\delta)\] for all large $n$. Since $|\ell_k| \to 0$ as $k \to \infty$, the sets $\ell_k(\delta)$ are non-empty for only finitely many indices $k$. Since $\ell_k(\delta)$ is contained in the interior of $\ell_k$ and $\ell_{n,k} \to \ell_k$ as $n \to \infty$, we obtain easily that $\ell_k(\delta) \subset \ell_{n,k}$ for all large $n$, and all $k$. This establishes the inclusion $E_n \subset E^\delta_\infty$ for large $n$.
        
    The other inclusion, namely $E_\infty \subset E^\delta_n$ for all large $n$, is equivalent to \[\bigcup_k \ell_k \supset \bigcup_k\ell_{n,k}(\delta).\] The same argument works by virtue of
    \begin{equation}
        \label{E:UniformEllnkBound} \lim_{K \to \infty} \lim_{n \to \infty} \, |\ell_{n,K}| = 0,
    \end{equation} which holds since $\lim_{n \to \infty} |\ell_{n,K}| = |\ell_K|$ for all $K$. By the ordering assumption on $\{|\ell_{n,k}|\}_k$ there exists $K = K(\delta) > 0$ for which $\ell_{n,k}(\delta) = \varnothing$ for $k \geq K$ and all large $n$, and we finish the proof as in the case of the other inclusion.
\end{proof}

\subsubsection{Finishing the construction of the associated set}

We return to our inequality \eqref{E:RefinedInconsistencyIneq}. Since \eqref{E:CarlesonSupBBoundEEPrime} holds, there exists a set $E_A \in \BCh$ for which we have, along some sequence of $A \to \infty$, the convergence \[ E_A \to E_\infty\] in the sense of \thref{L:FinalLemmaNecessity}. In turn, by \eqref{E:CarlesonSupBBoundEEPrime} yet again, we may pass to a subsequence once more to ensure also that \[E'_A \to E'_\infty\] as $A \to \infty$, in the same sense. Note that $|E_\infty| = |E'_\infty|$ by part $(i)$ of \thref{L:FinalLemmaNecessity} and what was said in the first observation following \eqref{E:RefinedInconsistencyIneq}.

We shall show that $E_\infty$ is the desired set. Namely, it will be shown that $E_\infty \in \assoch{h}{w}$ and $\nu(E_\infty) > 0$. 

\begin{lem} \thlabel{L:EE'equalityLemma}
    With the notation as above, we have $E'_\infty \subset E_\infty$.
    In particular, since the Lebesgue measures of the two sets are equal, they differ at most by a set of Lebesgue measure zero.
\end{lem}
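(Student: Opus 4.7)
The plan is to push the trivial inclusion $E'_A \subset E_A$, which holds at each finite stage, through to the limit using the Hausdorff-type approximation provided by part $(ii)$ of \thref{L:FinalLemmaNecessity}. From \eqref{E:EaE'relation} we have $E_A = E'_A \cup U'_A$, so $E'_A \subset E_A$ for every $A$, and we have arranged matters by passing to a common subsequence so that both convergences $E'_A \to E'_\infty$ and $E_A \to E_\infty$ take place along the same sequence of $A \to \infty$ in the sense of \thref{L:FinalLemmaNecessity}.

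Fix $z \in E'_\infty$ and an arbitrary $\delta > 0$. Applying part $(ii)$ of \thref{L:FinalLemmaNecessity} to both convergences, for all sufficiently large $A$ we have simultaneously $E'_\infty \subset (E'_A)^\delta$ and $E_A \subset (E_\infty)^\delta$. Combining these with the inclusion $E'_A \subset E_A$ gives
\[
z \in E'_\infty \subset (E'_A)^\delta \subset (E_A)^\delta \subset (E_\infty)^{2\delta}.
\]
Since $\delta > 0$ was arbitrary and $E_\infty$ is closed, we conclude that $z \in \bigcap_{\delta > 0} (E_\infty)^{2\delta} = E_\infty$. This proves $E'_\infty \subset E_\infty$.

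The ``in particular'' clause is immediate: part $(i)$ of \thref{L:FinalLemmaNecessity}, together with the observation following \eqref{E:EaE'relation} that $\lim_{A \to \infty} |E_A| - |E'_A| = 0$, gives $|E_\infty| = |E'_\infty|$; combined with the inclusion $E'_\infty \subset E_\infty$ just established, the symmetric difference $E_\infty \setminus E'_\infty$ must have Lebesgue measure zero.

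I do not anticipate a genuine obstacle here. The compactness lemma has been tailored precisely so that inclusions up to arbitrarily small $\delta$-thickenings survive the limit, and the relation $E'_A \subset E_A$ is explicit from the construction, so everything reduces to a two-line $\delta$-neighbourhood chase.
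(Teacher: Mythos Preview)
Your proof is correct. The overall strategy matches the paper's---push the finite-stage inclusion $E'_A \subset E_A$ through to the limit via \thref{L:FinalLemmaNecessity}---but your execution is a bit cleaner: you use only the $\delta$-neighbourhood statement in part $(ii)$ and chain the inclusions directly, whereas the paper argues contrapositively on the complements and invokes part $(iii)$, tracking a specific complementary interval $J(A)$ of $E_A$ (which is also complementary to $E'_A$) converging to the complementary interval $\ell$ of $E_\infty$ that contains a given $z \notin E_\infty$. Both routes are short; yours avoids the extra bookkeeping of identifying which complementary intervals of $E'_A$ come from the $J''_i$ side.
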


\begin{proof}
    We prove the reverse inclusion \[ \T \setminus E'_\infty \supset \T \setminus E_\infty.\]
    Let $z \in \T \setminus E_\infty$, and let $\ell$ be the maximal open interval complementary to $E_\infty$ which contains $z$. By part $(iii)$ of \thref{L:FinalLemmaNecessity}, there exists a sequence of intervals $\{J(A)\}_A$ complementary to $E_A$ such that $J(A) \to \ell$ in the sense of endpoint convergence. By construction, the intervals $J(A)$ are complementary to $E'_A$ also. Since the distance from $z$ to the endpoints of $\ell$ is positive, and the endpoints of $J(A)$ converge to the endpoints of $\ell$, it follows that the distance from $z$ to $E'_A$ is bounded from below as $A \to \infty$. Hence $z \not\in (E'_A)^\delta$ for some fixed $\delta > 0$ sufficiently small and all large $A$. By part $(ii)$ of \thref{L:FinalLemmaNecessity}, we obtain $z \not\in E'_\infty$, and so $z \in \T \setminus E'_\infty$. 
\end{proof}

From \eqref{E:RefinedInconsistencyIneq} and the non-negativity of $\nu$ we deduce that 
\begin{equation}
\label{E:limsupREAPrimeIntegrals}
\limsup_{A \to \infty} \int_{E'_A} R \, dm < \limsup_{A \to \infty} \, \nu(E_A) \leq \nu(\T).
\end{equation} We will show that this implies the inequality \begin{equation}
    \label{E:E'inftyRassociated}
    \int_{E'_\infty} R \, dm \leq \nu(\T)
\end{equation} from which we will easily deduce that $E_\infty \in \assoch{h}{w}$. If $|E'_\infty| = 0$, then the above inequality of course trivially holds, since the left-hand side is equal to $0$. By \thref{L:EE'equalityLemma} we have in that case $|E_\infty| = 0$ also, and hence $E_\infty \in \assoch{h}{w}$. In the case that $|E'_\infty| > 0$, by part $(i)$ of \thref{L:FinalLemmaNecessity}, for any small $\delta > 0$ and all large enough $A$, we have that $|E'_\infty| - \delta \leq |E'_A|$ and $E'_A \subset (E'_\infty)^\delta$. By fixing $\delta$, this implies for large $A$ the inequalities
\begin{align*}
    |E_\infty'| - \delta \leq |E'_A| &= |(E'_A \cap E'_\infty)| + |E'_A \setminus E'_\infty| \\
    &\leq |E'_\infty|  + |\big(E'_\infty)^\delta \setminus E'_\infty|.
\end{align*} 
Since $|\big(E'_\infty)^\delta \setminus E'_\infty| \to 0$ as $\delta \to 0$, the above inequalities imply that
\[ |E'_\infty| = \lim_{A \to \infty} |E'_A \cap E'_\infty| .\] Therefore 
\begin{equation}
    \label{E:EnFillingOutEinfty}
    E'_\infty = \big(E'_A \cap E'_\infty \big) \cup r'_A
\end{equation} where $|r'_A| \to 0$. For any $M > 0$ we consequently have \[\int_{r'_A} \min(R, M)\, d\m \to 0\] as $A \to \infty$, and since
\begin{align*}
    \int_{E'_\infty} \min(R, M)\, d\m &= \int_{E_A' \cap E'_\infty} \min(R, M)\, d\m + \int_{r'_A} \min(R, M)\, d\m
    \\  &\leq \int_{E'_A} R \, dm + \int_{r'_A} \min(R, M)\, d\m 
\end{align*}
we obtain from \eqref{E:limsupREAPrimeIntegrals} that
\[ \int_{E'_\infty} \min(R, M)\, d\m \leq \nu(\T).\] Letting $M \to \infty$ we arrive at \eqref{E:E'inftyRassociated}, which is equivalent to $E'_\infty \in \assoch{h}{w}$. It follows that $E_\infty \in \assoch{h}{w}$, since $\int_{E_\infty} R \, d\m = \int_{E'_\infty} R \, d\m$, owing to the fact that by \thref{L:EE'equalityLemma} the sets $E_\infty$ and $E'_\infty$ differ only by a set of Lebesgue measure zero. Since $E_A \subset E_\infty^\delta$ for all large $A$ and any $\delta > 0$, the inequalities in \eqref{E:RefinedInconsistencyIneq} and the non-negativity of $\nu$ show that 
\[ 0 < c_0 \leq \limsup_{A \to \infty} \nu(E_A) \leq \nu(E_\infty^\delta).\]
Finally, by continuity of finite measures, we obtain \[ \nu(E_\infty) = \lim_{\delta \to 0} \nu(E^\delta_\infty) \geq c_0 > 0.\]
We have thus shown that the inconsistency of \eqref{E:LinearProgram} implies that $\nu$ assigns positive mass to the set $E_\infty \in \assoch{h}{w}$. With this, our cyclicity proof is complete.

\subsection{Proof of the corollary}

Having completed the proof of \thref{T:CyclicityMainTheorem}, we give a proof of the corollary stated in the Introduction. We repeat the statement for convenience.

\begin{cor*}
    Let $f = BS_\nu U$ be the inner-outer factorization of a function $f \in \hil^\infty$, where $B$ is a Blaschke product, $S_\nu$ is a singular inner function, and $U$ is an outer function. Let $\nu = \nu_p + \nu_c$ be the decomposition of $\nu$ in \eqref{E:NuDecomp} associated to the weight $w$ and the gauge function $h$ in \eqref{E:CarlesonGaugeDef}. If $\mu$ has the form \eqref{E:MuDef} and $[f]$ denotes the smallest closed $M_z$-invariant subspace containing $f$ in $\Po^t(\mu)$, then \[[f] = [BS_{\nu_p}],\] and every function $h \in [f] \cap \hil^\infty$ satisfies $h/BS_{\nu_p} \in \hil^\infty$.
\end{cor*}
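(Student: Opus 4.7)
The plan has two parts: establish the identity $[f]=[BS_{\nu_p}]$, then derive the permanence statement $h/BS_{\nu_p}\in\hil^\infty$ for $h\in[f]\cap\hil^\infty$. Decompose $\nu=\nu_p+\nu_c$ and factor $f=(BS_{\nu_p})\cdot(S_{\nu_c}U)$; both factors belong to $\hil^\infty\subset\Po^t(\mu)$ (polynomial-approximable via dilation and Taylor truncation), and multiplication by either is a bounded operator on $\Po^t(\mu)$. The inclusion $[f]\subseteq[BS_{\nu_p}]$ is then immediate: polynomials $p_n\to S_{\nu_c}U$ in $\Po^t(\mu)$ yield $p_n\cdot BS_{\nu_p}\to f$ via bounded multiplication by $BS_{\nu_p}$.

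For the reverse inclusion $[BS_{\nu_p}]\subseteq[f]$, which is the heart of the matter, the plan is to reduce, via bounded multiplication by $BS_{\nu_p}$, to showing $1\in[S_{\nu_c}U]$, i.e., that $S_{\nu_c}U$ is cyclic in $\Po^t(\mu)$. Two ingredients enter: first, \thref{T:CyclicityMainTheorem} gives cyclicity of $S_{\nu_c}$ since $\nu_c$ vanishes on every $E\in\assoc{w}$ by construction; second, every bounded outer function $U\in\hil^\infty$ is cyclic in $\Po^t(\mu)$. Granted these, polynomials $r_n$ with $r_nU\to 1$ in $\Po^t(\mu)$ give $r_n(S_{\nu_c}U)\to S_{\nu_c}$ via bounded multiplication, placing $S_{\nu_c}\in[S_{\nu_c}U]$; the $M_z$-invariance of $[S_{\nu_c}U]$ then forces $[S_{\nu_c}U]\supseteq[S_{\nu_c}]=\Po^t(\mu)$, so $S_{\nu_c}U$ is cyclic. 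Polynomials $q_n$ with $q_n S_{\nu_c}U\to 1$ then give $q_nf=q_n(BS_{\nu_p})(S_{\nu_c}U)\to BS_{\nu_p}$ in $\Po^t(\mu)$ via bounded multiplication by $BS_{\nu_p}$.

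For the permanence statement, $h\in[f]\cap\hil^\infty=[BS_{\nu_p}]\cap\hil^\infty$ by the first part. Writing $\nu_p=\nu|\bigcup_n E_n$ for the increasing sequence $E_n\in\assoc{w}$ defining $\nu_p$, set $\nu_p^{(N)}=\nu_p|E_N$; the factorization $BS_{\nu_p}=BS_{\nu_p^{(N)}}\cdot S_{\nu_p-\nu_p^{(N)}}$ combined with polynomial approximation of the bounded second factor places $h$ in $[BS_{\nu_p^{(N)}}]\cap\hil^\infty$. A version of \thref{P:PermanenceInnerAssocSet} augmented by the Blaschke factor $B$ (whose zeros are isolated, so standard Blaschke permanence in $\hil^\infty$ applies) then gives $h/BS_{\nu_p^{(N)}}\in\hil^\infty$ with norm at most $\|h\|_\infty$, uniformly in $N$. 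Passing to $N\to\infty$ and exploiting the pointwise convergence $S_{\nu_p^{(N)}}\to S_{\nu_p}$ in $\D$ yields $h/BS_{\nu_p}\in\hil^\infty$.

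The principal obstacle is establishing the cyclicity of bounded outer functions in $\Po^t(\mu)$, an auxiliary lemma which is nontrivial when $|U|$ may decay to zero on subsets of $\T$ of positive Lebesgue measure. The natural approach via dilations $U_r(z)=U(rz)$ and polynomial approximation of $1/U_r$ in the disk algebra requires a balancing $r_n\to 1$ in which $\|U-U_{r_n}\|_{\Po^t(\mu)}$ decays fast enough to tame the blow-up of $\|1/U_{r_n}\|_\infty$; this hinges on the specific interplay between the integrability $\log|U|\in\mathcal{L}^1(d\m)$ and the structure of $\mu$. An alternative route would adapt the construction of $F_{\epsilon,N}$ in the proof of \thref{T:CyclicityMainTheorem} to produce a cyclic approximation of $S_{\nu_c}U$ directly, using a shifted premeasure $f_{\epsilon,N}-\log|U|\,d\m$ that absorbs the outer factor while preserving the critical growth estimates.
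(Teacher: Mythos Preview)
Your overall strategy coincides with the paper's: both directions of $[f]=[BS_{\nu_p}]$ via bounded multiplication and cyclicity of $S_{\nu_c}U$, and the permanence statement via restricting $\nu_p$ to an exhausting sequence $E_N\in\assoc{w}$, invoking \thref{P:PermanenceInnerAssocSet}, and passing to the limit. The paper handles the Blaschke factor in one line (``certainly $h/B\in\hil^\infty$'') rather than folding it into the permanence proposition, but this is cosmetic.

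The one substantive divergence is your treatment of the cyclicity of the bounded outer function $U$. You flag this as the ``principal obstacle'' and propose either a delicate dilation balancing or an adaptation of the premeasure construction. Neither is needed: the paper simply asserts that $U$ is cyclic, and the standard justification is short. For $\epsilon>0$ let $U^{[\epsilon]}$ be the outer function with boundary modulus $\max(|U|,\epsilon)$ on $\T$. Then $1/U^{[\epsilon]}\in\hil^\infty$, so $U/U^{[\epsilon]}=U\cdot(1/U^{[\epsilon]})\in[U]$ by bounded multiplication. The quotient $U/U^{[\epsilon]}$ is outer, bounded by $1$ in $\D$, and its boundary log-modulus $\min(0,\log|U|-\log\epsilon)$ tends to $0$ in $\mathcal{L}^1(d\m)$ by dominated convergence (it is dominated by $|\log|U||\in\mathcal{L}^1(d\m)$). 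Hence $U/U^{[\epsilon]}\to 1$ pointwise in $\D$ and, along a subsequence, $\m$-a.e.\ on $\T$; the bound $|U/U^{[\epsilon]}|\leq 1$ and dominated convergence in $\mathcal{L}^t(\mu)$ give $U/U^{[\epsilon]}\to 1$ in $\Po^t(\mu)$, so $1\in[U]$. Your proposed workarounds would succeed but are considerably heavier than necessary.
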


\begin{proof}
    It is clear that $[f] \subseteq [BS_{\nu_p}]$, since $f/BS_{\nu_p}$ is bounded, and so $f = (f/BS_{\nu_p})BS_{\nu_p} \in [BS_{\nu_p}]$. Conversely, if $g \in [BS_{\nu_p}]$, then there exists a sequence $\{p_n\}_n$ of polynomials for which $BS_{\nu_p} p_n \to g$ in $\Po^t(\mu)$. Since $S_{\nu_c}$ and $U$ are bounded functions, and both are cyclic in $\Po^t(\mu)$, the argument in the second remark following \thref{P:CyclicityProp} shows that $S_\nu U$ is cyclic in $\Po^t(\mu)$, and so there exists a sequence $\{q_k\}_k$ of polynomials for which $S_{\nu_c}Uq_k \to 1$ in $\Po^t(\mu)$. Then 
    \begin{align*}
        \|fp_nq_k - g\|_{\mu,t} &= \|BS_{\nu_p}p_n(S_{\nu_c}Uq_k - 1) + BS_{\nu_p}p_n - g\| \\
        &\leq \|BS_{\nu_p}p_n\|_\infty \|S_{\nu_c}Uq_k - 1\|_{\mu,t} + \|BS_{\nu_p}p_n - g\|_{\mu,t}.
    \end{align*} Fixing a large $n$ we make the second term arbitrarily small. Next, after fixing $n$, we may fix also a large $k$ to make the first term arbitrarily small. It follows that $g \in [f]$, and so we have the equality $[f] = [BS_{\nu_p}]$. The divisibility statement follows from \thref{P:PermanenceInnerAssocSet}. Indeed, recall that $\nu_p$ is supported on a countable union $\{E_n\}_n$ of sets in $\assoc{w}$, and we may assume that the sets $E_n$ increase with $n$. If $\nu_n$ is the restriction of $\nu_p$ to $E_n$, then $\nu_n \to \nu_p$ in the sense of weak*-convergence of measures. For any function $h \in [f] \cap \hil^\infty = [BS_{\nu_p}] \cap \hil^\infty \subset [S_{\nu_n}] \cap \hil^\infty$ we have by \thref{P:PermanenceInnerAssocSet} and the (generalized) maximum principle in $\hil^\infty$ that \[ \|h/S_{\nu_n}\|_\infty = \|h\|_\infty.\] By weak*-convergence and formula \eqref{E:SnuEq}, we have that $S_{\nu_n}(z) \to S_\nu(z)$ for $z \in \D$. Let $n \to \infty$ to obtain that 
    \[ \|h/S_{\nu_p}\|_\infty = \|h\|_\infty.\] Since certainly we have $h/B \in \hil^\infty$, we conclude finally that $h/BS_{\nu_p} \in \hil^\infty$. 
\end{proof}

\subsection{Nevanlinna class and cyclicity} As alluded to in the Introduction, the cyclicity result has a simple extension from bounded functions to quotients of bounded functions, i.e, to the Nevanlinna class:
\[ f = d/c, \quad d, c\in \hil^\infty, \, c(z) \neq 0, z \in \D.\] A function $f$ of this form has many other representation as a quotient of bounded functions, and a particularly useful one is
\begin{equation}
\label{E:NevRepr}
f = \frac{B S_{\nu_1} d_o}{S_{\nu_2} c_o},
\end{equation} where $d_o$ and $c_o$ are bounded outer functions, $B$ is a Blaschke product, and $\nu_1, \nu_2$ are singular inner functions for which the corresponding measures $\nu_1$ and $\nu_2$ are mutually singular. In other words, $S_{\nu_1}$ and $S_{\nu_2}$ have no common non-trivial singular inner factor.

\begin{cor} Let $f$ have the form \eqref{E:NevRepr}, with $B \equiv 1$, so that $f$ is non-vanishing in $\D$. If $f \in \Po^t(\mu)$ and $\mu$ has the form \eqref{E:MuDef}, then $f$ is cyclic in $\Po^t(\mu)$ if and only if $S_{\nu_1}$ is cyclic in $\Po^t(\mu)$. 
\end{cor}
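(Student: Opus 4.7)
The plan is to reduce both directions to the previously established Corollary for bounded functions by using as a bridge the bounded element $g := S_{\nu_1} d_o \in \hil^\infty$, which satisfies $g = f \cdot (S_{\nu_2} c_o)$. The inner-outer factorization of $g$ is $g = 1 \cdot S_{\nu_1} \cdot d_o$ (trivial Blaschke factor, singular inner factor $S_{\nu_1}$, outer factor $d_o$), so the Corollary yields
\[ [g] = [S_{\nu_{1,p}}], \]
where $\nu_1 = \nu_{1,p} + \nu_{1,c}$ is the decomposition \eqref{E:NuDecomp}. In particular $g$ is cyclic in $\Po^t(\mu)$ if and only if $\nu_{1,p} = 0$, which by \thref{T:CyclicityMainTheorem} is the precise condition for cyclicity of $S_{\nu_1}$. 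Throughout I will use the standard fact (implicit in the proof of the Corollary) that multiplication by any $\hil^\infty$-function is continuous on $\Po^t(\mu)$ and leaves every closed $M_z$-invariant subspace stable.

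For the backward direction, assume $S_{\nu_1}$ is cyclic. Then $\nu_{1,p} = 0$, so $[g] = [1] = \Po^t(\mu)$ by the identification above, i.e.\ $g$ is cyclic in $\Po^t(\mu)$. Since $S_{\nu_2} c_o \in \hil^\infty$, the closed $M_z$-invariant subspace $[f]$ contains $f \cdot (S_{\nu_2} c_o) = g$; hence $\Po^t(\mu) = [g] \subseteq [f]$ and $f$ is cyclic.

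For the forward direction, assume $f$ is cyclic, so there exist polynomials $p_n$ with $f p_n \to 1$ in $\Po^t(\mu)$. Multiplying through by the bounded function $S_{\nu_2} c_o$ and invoking continuity of this multiplier yields $g p_n \to S_{\nu_2} c_o$ in $\Po^t(\mu)$, so $S_{\nu_2} c_o \in [g] \cap \hil^\infty$. By the divisibility assertion of the Corollary applied to $g$,
\[ \frac{S_{\nu_2} c_o}{S_{\nu_{1,p}}} \in \hil^\infty. \]
The main obstacle is to extract $\nu_{1,p} = 0$ from this divisibility. Since $c_o$ is outer, the singular inner factor of $S_{\nu_2} c_o$ is exactly $S_{\nu_2}$, so by standard inner-function theory the quotient being bounded forces $S_{\nu_{1,p}}$ to divide $S_{\nu_2}$ as inner functions, i.e.\ $\nu_{1,p} \leq \nu_2$ as measures. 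By construction $\nu_{1,p}$ is a restriction of $\nu_1$, hence $\nu_{1,p} \leq \nu_1$ as well. The hypothesis that $\nu_1$ and $\nu_2$ are mutually singular then forces $\nu_{1,p} = 0$, and \thref{T:CyclicityMainTheorem} yields the cyclicity of $S_{\nu_1}$.
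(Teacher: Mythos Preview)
Your proof is correct and follows essentially the same route as the paper: both directions hinge on the bridge function $g = S_{\nu_1} d_o = (S_{\nu_2} c_o) f$, the containment $g \in [f]$, the membership $S_{\nu_2} c_o \in [g]$ obtained by multiplying $f p_n \to 1$ through by $S_{\nu_2} c_o$, and a divisibility contradiction with the mutual singularity of $\nu_1$ and $\nu_2$. The only cosmetic difference is that the paper works with a single set $E \in \assoc{w}$ and invokes \thref{P:PermanenceInnerAssocSet} directly, whereas you package the same step via the Corollary and the measure $\nu_{1,p}$.
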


\begin{proof}
If $S_{\nu_1}$ is not cyclic, then according to our main result there exists a set $E \in \assoc{w}$ for which $\nu_1(E) > 0$, and any bounded function in the invariant subspace $[S_{\nu_1|E}]$ has an inner factor divisible by $S_{\nu_1|E}$. If $f$ is cyclic, then for some sequence $\{p_n\}_n$ of polynomials we have $fp_n \to 1$ in the norm of $\Po^t(\mu)$. Since multiplication by $S_{\nu_2}c_o$ is a bounded operation on $\Po^t(\mu)$, we obtain the norm convergence \[ S_{\nu_1}d_o S_{\nu_2} c_o p_n \to S_{\nu_2} c_o, \] and hence $S_{\nu_2}c_o \in [S_{\nu_1|E}]$. Since $c_0$ is outer, and $\nu_1|E$ and $\nu_2$ are mutually singular, the function $S_{\nu_2}c_o$ has the inner factor $S_{\nu_2}$ which is not divisible by $S_{\nu_1|E}$, and so we reach a contradiction. Thus $S_{\nu_1}$ not being cyclic implies that $f$ is not cyclic either.

Conversely, assume that $S_{\nu_1}$ is cyclic. Then $S_{\nu_1}d_o$ is also cyclic, and clearly $ S_{\nu_1}d_o = S_{\nu_2}c_o f \in [f]$. Hence \[ [f] \supseteq [S_{\nu_1}d_o] = \Po^t(\mu),\] and the proof is complete.

\end{proof}

\appendix

\section{Premeasures}
\label{sec:appendixA}

The use of premeasures in the context of cyclicity problems goes back to Korenblum's work in \cite{korenblum1975extension}, \cite{korenblum1977beurling}, where they were used in Poisson-type representations of harmonic functions which satisfy certain growth bounds in the unit disk $\D$. In the article, we study only the cyclicity of bounded functions, and as a consequence, our use of premeasures is not a necessity, but rather a convenience which allows for certain simplifications in proofs. 

\subsection{Basic properties}

\begin{defn} \thlabel{D:PremDef}
A \textit{normalized premeasure} $\sigma$ is a set function mapping intervals in $\T$ (closed, open or half-open, and the interval may reduce to a single point) into real numbers, with the following properties:

\begin{enumerate}[(P1)]
    \item $\sigma(\T) = 0$ \textit{(normalization)},
    \item  $\sigma(I_1 \cup I_2) = \sigma(I_1) + \sigma(I_2)$ for disjoint intervals $I_1, I_2 \subset \T$ \textit{(additivity)},
    \item if $\{I_n\}_n$ is a nested sequence of intervals shrinking to the empty set, then $\sigma(I_n) \to 0$ \textit{(continuity)}.
\end{enumerate}

If the last two properties are satisfied but not the first, then we will simply say that $\sigma$ is a \textit{premeasure}. 
\end{defn}

Korenblum in \cite{korenblum1975extension} and \cite{korenblum1977beurling} postulates a number of properties of premeasures. We will spend this section to argue for validity of those properties in a slightly increased generality, which is necessary for the rest of the article.

To every normalized premeasure $\sigma$ we may associate its primitive function $\rho_\sigma: (0, 2\pi] \to \R$, defined by 
\begin{equation} \label{E:PremeasurPrimitiveDef} \rho_\sigma(t) = \sigma(I_t), \quad I_t = [1, e^{it}), \, t \in (0, 2\pi]. \end{equation} Here $I_t = [1, e^{it})$ is the half-open interval of $\T$ starting at $1$ (inclusive) and ending at $e^{it}$ (exclusive). The characterizing properties of the primitive functions $\rho = \rho_\sigma$ arising in this way from normalized premeasures are the following.

\begin{prop} \thlabel{P:PremeasurePrimitiveProp}
Let $\sigma$ be a normalized premeasure. Then the primitive function in \eqref{E:PremeasurPrimitiveDef} satisfies the following properties:
\begin{enumerate}[(I)]
    \item if $t' \to t \in (0, 2\pi]$ from the left, then $\rho(t') \to \rho(t)$ \textit{(continuity from the left)},
    \item if $t' \to t \in [0, 2\pi)$ from the right, then  $\rho(t')$ converges to a limit $\rho(t+)$ \textit{(limits from the right)},
    \item $\rho(2\pi) = 0$.
\end{enumerate}
Conversely, every function $\rho$ satisfying (I), (II) and (III) corresponds to a normalized premeasure $\sigma$, defined uniquely by the requirement that
\begin{equation} \label{E:ReconstrSigma} \sigma(I_t) := \rho(t), \quad I_t = [1, e^{it}), \, t \in (0, 2\pi). \end{equation}
\end{prop}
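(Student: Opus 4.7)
My plan is to verify both directions by carefully tracking the correspondence $\rho(t) = \sigma([1, e^{it}))$ across the three defining axioms of a normalized premeasure.

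For the forward direction, property (III) is the direct translation of the normalization axiom (P1): since $I_{2\pi} = \T$, we have $\rho(2\pi) = \sigma(\T) = 0$. For left-continuity (I), if $t_n \uparrow t$ in $(0, 2\pi]$, additivity (P2) yields
\[
\rho(t) - \rho(t_n) \,=\, \sigma\big([e^{it_n}, e^{it})\big),
\]
and the intervals $[e^{it_n}, e^{it})$ form a nested family shrinking to the empty set, since the right endpoint $e^{it}$ is excluded. Hence (P3) forces the right-hand side to $0$. For right-limits (II), if $t_n \downarrow t$ in $[0, 2\pi)$, additivity combined with the decomposition $[e^{it}, e^{it_n}) = \{e^{it}\} \cup (e^{it}, e^{it_n})$ gives
\[
\rho(t_n) - \rho(t) \,=\, \sigma(\{e^{it}\}) + \sigma\big((e^{it}, e^{it_n})\big),
\]
and since the open intervals $(e^{it}, e^{it_n})$ shrink to $\varnothing$, the limit equals $\sigma(\{e^{it}\})$, which is independent of the chosen sequence; in particular $\rho(t+)$ exists.

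For the converse, given $\rho$ satisfying (I)--(III), I extend \eqref{E:ReconstrSigma} to all half-open intervals by the forced prescription $\sigma([e^{is}, e^{it})) := \rho(t) - \rho(s)$ for $0 \leq s < t \leq 2\pi$ (setting $\rho(0) := 0$ so as to handle intervals based at the point $1$), and to singletons by $\sigma(\{e^{it}\}) := \rho(t+) - \rho(t)$, which is well-defined by (II). Open and closed intervals are then assigned $\sigma$-values via elementary decompositions such as $[e^{is}, e^{it}] = [e^{is}, e^{it}) \cup \{e^{it}\}$ and $(e^{is}, e^{it}) = [e^{is}, e^{it}) \setminus \{e^{is}\}$. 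Additivity (P2) reduces to the telescoping identity $\rho(u) - \rho(s) = [\rho(t) - \rho(s)] + [\rho(u) - \rho(t)]$, supplemented by bookkeeping of a possible singleton jump at a shared endpoint; (P1) is exactly (III). Uniqueness is automatic: any normalized premeasure $\sigma'$ satisfying \eqref{E:ReconstrSigma} must agree with $\sigma$ on every half-open interval $[e^{is}, e^{it})$ by additivity, on every singleton by applying (P3) to a shrinking family of half-open intervals, and hence on every interval.

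The main technical step I expect to complete is the verification of property (P3) for the constructed $\sigma$. Given a nested sequence $\{I_n\}$ of intervals with empty intersection, a compactness argument on $[0, 2\pi]$ shows that both endpoints of $I_n$ converge to a common value $t^* \in [0, 2\pi]$, and that the convergence is one-sided in a way that excludes $e^{it^*}$ from $\bigcap_n I_n$. Each $\sigma(I_n)$ is then a difference of two values of $\rho$, possibly adjusted by a singleton jump whose contribution itself tends to zero by the same one-sided convergence; applying (I) or (II) at $t^*$ then yields $\sigma(I_n) \to 0$. This is the one case analysis of substance; the remaining verifications are routine set-theoretic checks.
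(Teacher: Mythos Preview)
Your proposal is correct and follows precisely the natural route that the paper's one-sentence sketch gestures at (the paper in fact skips the proof, saying only that one ``extends $\sigma$ to unions of any type of intervals using the continuity properties (I) and (II) and the requirement (P2)''). Your forward direction is clean; for the converse, your case analysis for (P3) is the right idea, though the phrase ``singleton jump whose contribution itself tends to zero'' is slightly imprecise---what you actually use is that $\rho(b_n+) \to \rho(t^*+)$ as $b_n \downarrow t^*$, which follows directly from the definition of the right limit, not that individual jumps vanish.
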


The lemma has a straightforward proof which we skip. Given $\rho$ and the equation \eqref{E:ReconstrSigma}, one extends $\sigma$ to unions of any type of intervals using the continuity properties (I) and (II) and the requirement (P2). If (III) does not hold for $\rho$, then the corresponding construction will produce a premeasure $\sigma$ which is not normalized. 

It will be convenient to set a notation for the size of the jump of $\rho$ at a point $t$:
\begin{equation} \label{E:JumpDef}
J_\rho(t) := |\rho(t) - \rho(t+)| = |\rho(t) - \lim_{\substack{ t' \to t \\ t' > t}} \rho(t')|, \quad t \in (0, 2\pi).
\end{equation}

Our definitions ensure that $J_\sigma$ is large only at a finite number of points.

\begin{lem} \thlabel{L:FiniteJumpLemma} For every $\delta > 0$, there exist only a finite number of $t \in (0, 2\pi)$ for which we have $J_\sigma(t) > \delta$.
\end{lem}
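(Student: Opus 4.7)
The plan is to argue by contradiction, leveraging the existence of one-sided limits of the primitive $\rho = \rho_\sigma$ at every point, combined with the Bolzano--Weierstrass theorem and extraction of a strictly monotone subsequence. Suppose that, contrary to the claim, there exist infinitely many distinct points $t_n \in (0, 2\pi)$ with $J_\rho(t_n) > \delta$. By Bolzano--Weierstrass, after passing to a subsequence we may assume $t_n \to t^* \in [0, 2\pi]$; by a further passage to a subsequence we may assume the convergence is strictly monotone, so either $t_n \nearrow t^*$ or $t_n \searrow t^*$.

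In the case $t_n \nearrow t^*$ we have $t^* \in (0, 2\pi]$, and by property (I) of \thref{P:PremeasurePrimitiveProp}, $\rho$ is continuous from the left at $t^*$. For each $n$, using the existence of $\rho(t_n+) = \lim_{t' \to t_n^+} \rho(t')$, we pick an auxiliary point $s_n \in (t_n, t^*)$ with
\[ |\rho(s_n) - \rho(t_n+)| < \delta/4. \]
Since $t_n < s_n < t^*$ and $t_n \to t^*$, both $s_n$ and $t_n$ approach $t^*$ from the left, so the left continuity of $\rho$ at $t^*$ forces $\rho(s_n) \to \rho(t^*)$ and $\rho(t_n) \to \rho(t^*)$, hence $\rho(s_n) - \rho(t_n) \to 0$. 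On the other hand, the triangle inequality yields
\[ |\rho(s_n) - \rho(t_n)| \geq J_\rho(t_n) - |\rho(s_n) - \rho(t_n+)| > \delta - \delta/4 = 3\delta/4, \]
a contradiction.

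The case $t_n \searrow t^*$ is handled symmetrically: now $t^* \in [0, 2\pi)$, and we invoke property (II) to obtain the right-limit $\rho(t^*+)$. Choose $s_n \in (t_n, t_{n-1})$ with $|\rho(s_n) - \rho(t_n+)| < \delta/4$; both $s_n$ and $t_n$ then approach $t^*$ from the right, so by the very definition of $\rho(t^*+)$ we have $\rho(s_n), \rho(t_n) \to \rho(t^*+)$, and the same triangle inequality estimate produces the contradiction.

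I do not anticipate any serious obstacle; the argument uses only the existence of one-sided limits of $\rho$ guaranteed by (I) and (II), and does not require (III) or any global boundedness of $\rho$. The sole piece of bookkeeping is the placement of the auxiliary points $s_n$ between consecutive terms of the monotone sequence and the limit $t^*$, which is straightforward given strict monotonicity.
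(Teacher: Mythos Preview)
Your proof is correct and follows essentially the same route as the paper's: argue by contradiction, extract a monotone subsequence converging to some $t^*$, choose auxiliary points witnessing the jumps, and derive a contradiction with the existence of the appropriate one-sided limit of $\rho$ at $t^*$. Your version is a bit more explicit with the $\delta/4$ bookkeeping and triangle inequality, whereas the paper simply interleaves $\{t_n\}$ with the auxiliary points and observes that the resulting sequence breaks property (I) or (II); but the underlying idea is identical.
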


\begin{proof}
Were the lemma not true, then for some $\delta > 0$ a sequence $\{t_n\}_n$ of distinct numbers would exist which converges to some $t^* \in [0, 2\pi]$ and for which we have $J(t_n) > \delta$ for all $n$. After passing to a subsequence, we may assume that the elements of the sequence $\{t_n\}_n$ are all either strictly larger or strictly smaller than $t^*$, say the latter. By the definition of $J_\rho$, for each $n$ there exists a number $t'_n$ satisfying $t_n < t'_n < t^*$ for which we have $|\rho(t_n)-\rho(t'_n)| > \delta$. If we interweave the sequences $\{t_n\}_n$ and $\{t'_n\}_n$ into a sequence $\{s_n\}_n$ then $s_n \to t$ from the left, but $\rho(s_n)$ does not converge, breaking property (I) (were $\{t_n\}_n$ all strictly larger than $t^*$, an analogous argument would break property (II) instead).
\end{proof}

A smooth function $h$ may be integrated against a normalized premeasure by employing the following definition:

\begin{equation}
    \label{E:IntegratePremeasure}
    \int_{[0,2\pi)} h(e^{it}) d\sigma(e^{it}) := -\int_{[0,2\pi)} \Big(\frac{d}{dt}h(e^{it})\Big) \rho_\sigma(t)\,dt.
\end{equation} We note that, as a consequence of \thref{L:FiniteJumpLemma}, the function $\rho_\sigma$ has only a countable number of discontinuities, and is therefore Riemann integrable. It follows that the right-hand side in \eqref{E:IntegratePremeasure} is well-defined in ordinary sense. If $\sigma$ is a bona fide finite Borel measure satisfying $\sigma(\T) = 0$, then both sides of \eqref{E:IntegratePremeasure} make sense and are equal. We extend the definition \eqref{E:IntegratePremeasure} to not necessarily normalized premeasures in the natural way by decomposing a premeasure $d\sigma$ as a sum $(d\sigma - \sigma(\T)d\m) + \sigma(\T)d\m$ of a normalized premeasure and a constant multiple of the Lebesgue measure. The integral we then define as the sum of the integrals corresponding to $\sigma - \sigma(\T)d\m$ and $d\m$, the second defined in the classical sense.

\subsection{Sequential compactness}

If a family of premeasures obeys a certain upper estimate, then the family is sequentially compact in the following weak sense.
\begin{lem}[\textbf{Helly selection principle}]
\thlabel{L:Helly}
If $h: [0,1] \to \R_+$ is a continuous increasing function, $h(0) = 0$, and $\{\sigma_n\}_n$ is a sequence of normalized premeasures which obey the upper estimate
\[ \sigma_n(I) \leq h(|I|)\] for all intervals $I$ on $\T$, then there exists a subsequence $\{\sigma_{n_k}\}_k$, a real constant $c$, and a normalized premeasure $\sigma$ satisfying $\sigma(I) \leq h(|I|)$ for all intervals $I$, such that
\begin{equation}
\label{E:AlmostConvJumpError}
\limsup_{k \to \infty} |\rho_{\sigma_{n_k}}(t) - \rho_\sigma(t) - c| \leq J_\rho(t), \quad t \in (0, 2\pi).
\end{equation} 
Moreover, in the above situation, we have 
\[ \lim_{k \to \infty} \int_{[0,2\pi)} g(e^{it}) d\sigma_{n_k}(e^{it}) = \int_{[0,2\pi)} g(e^{it}) d\sigma(e^{it})\] for all smooth functions $g: \T \to \mathbb{C}$.
\end{lem}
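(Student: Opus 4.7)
The proof will adapt the classical Helly selection theorem to the primitive functions $\rho_n := \rho_{\sigma_n}$, the key observation being that the hypothesis $\sigma_n(I) \leq h(|I|)$ forces these primitives to admit only downward jumps: for any singleton, $\sigma_n(\{e^{it}\}) \leq h(0) = 0$, so $\rho_n(t+) - \rho_n(t) \leq 0$. I first note the uniform bound $|\rho_n(t)| \leq h(2\pi)$, which follows from $\rho_n(t) \leq h(t)$ paired with the complementary estimate $-\rho_n(t) = \sigma_n(\T \setminus [1, e^{it})) \leq h(2\pi - t)$. Fixing a countable dense subset $D \subset (0, 2\pi]$ containing the endpoint $2\pi$, a diagonal extraction produces a subsequence $\{\rho_{n_k}\}$ converging pointwise on $D$ to a function $\rho^*$, and the asymmetric inequality $\rho_n(t_2) - \rho_n(t_1) \leq h(t_2 - t_1)$ for $t_1 < t_2$ persists in the limit: $\rho^*(s_2) - \rho^*(s_1) \leq h(s_2-s_1)$ for $s_1, s_2 \in D$ with $s_1 < s_2$.

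Next I extend $\rho^*$ to a candidate primitive $\rho$ on $(0, 2\pi]$ by left-continuation,
\[\rho(t) := \lim_{s \uparrow t,\, s \in D} \rho^*(s).\]
Existence of this limit for each $t$ follows from the asymmetric bound via a double-limit argument yielding $\limsup \leq \liminf$ along any increasing sequence $s_n \uparrow t$ in $D$; an analogous computation produces right-limits at every point. By \thref{P:PremeasurePrimitiveProp}, the function $\rho_\sigma := \rho - c$ with $c := \rho(2\pi)$ is then the primitive of a unique normalized premeasure $\sigma$, which inherits the bound $\sigma(I) \leq h(|I|)$ from $\sigma_{n_k}$ by passing to the limit along $D$ and invoking left continuity.

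To verify the convergence \eqref{E:AlmostConvJumpError}, I fix $t \in (0, 2\pi)$ and $\epsilon > 0$, and select $s_-, s_+ \in D$ with $s_- < t < s_+$ both within distance $\epsilon$ of $t$ and satisfying $|\rho^*(s_-) - \rho(t)| < \epsilon$ and $|\rho^*(s_+) - \rho(t+)| < \epsilon$. The estimates $\rho_{n_k}(t) \leq \rho_{n_k}(s_-) + h(t-s_-)$ and $\rho_{n_k}(t) \geq \rho_{n_k}(s_+) - h(s_+ - t)$ then trap $\rho_{n_k}(t)$ for large $k$ inside the interval $[\rho(t+) - 3\epsilon,\, \rho(t) + 3\epsilon]$. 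Since only downward jumps are possible, this interval contains $\rho(t)$ and has length $J_\rho(t) + 6\epsilon$, so the distance from $\rho_{n_k}(t)$ to $\rho(t)$ is at most $J_\rho(t) + 3\epsilon$; the claim follows upon letting $\epsilon \to 0$. The integral convergence is then a consequence of the integration-by-parts definition \eqref{E:IntegratePremeasure}, dominated convergence applied to the uniformly bounded sequence $\{\rho_{n_k}\}$ which converges to $\rho_\sigma + c$ outside the countable jump set of $\rho$, and the fact that the spurious constant $c$ integrates against $\tfrac{d}{dt}g(e^{it})$ to zero by periodicity of $g$.

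The main technical obstacle is the absence of bounded variation for the primitives $\rho_n$, which rules out direct application of the monotone-function Helly theorem. The remedy is the one-sided bound coming from the hypothesis together with the crucial observation that the admissible family of premeasures admits only downward jumps; both ingredients are consequences of $\sigma_n(I) \leq h(|I|)$ combined with $h(0) = 0$.
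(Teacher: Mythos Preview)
Your proposal is correct and follows essentially the same route as the paper's proof: uniform boundedness of the primitives, diagonal extraction on a countable dense set, left-regularization to produce the limiting primitive, subtraction of the value at $2\pi$ to restore normalization, a sandwich argument for the jump inequality, and dominated convergence for the integral statement. Your explicit observation that the hypothesis forces only downward jumps is a nice conceptual gloss, but it is implicitly present in the paper's argument as well, which derives the same chain $\rho(t+)\le\liminf\le\limsup\le\rho(t)$ without naming it.
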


The principle is stated in a similar form and without proof in \cite{korenblum1975extension} for the particular choice of gauge function $h$ in \eqref{E:CarlesonGaugeDef}. The normalization assumption is necessary as evidenced by the sequence $\{-n\cdot d\m\}_n$, which obviously satisfies the upper estimate, but for which the sequence $\{\rho_{\sigma_n}\}_n$ converges pointwise to $-\infty$ on $(0,2\pi)$ as $n \to \infty$. The appearance of the constant $c$ in \eqref{E:AlmostConvJumpError} may be justified by considering a sequence $\{\sigma_n\}_n$ for which we have $\lim_{n \to \infty} \rho_{\sigma_n}(t) = c \neq 0$ for all $t$ close to $2\pi$. Take, for instance, the functions
\begin{align*}
\rho_{\sigma_n}(t) = \begin{cases} t, & t \in [0, 1], \\ 1, & t \in (1, 2\pi - 1/n) \\ 1 - n(t-2\pi+1/n), & t \in [2\pi - 1/n, 2\pi]
\end{cases}
\end{align*} Then it follows readily from \eqref{E:ReconstrSigma} that we have the upper estimate $\sigma_n(I) \leq |I|$. However, note that if a subsequence of $\{\rho_{\sigma_n}\}_n$ converges pointwise to a function $\rho$ which satisfies (I), then $\rho$ must also satisfy $\rho(2\pi) = 1$, breaking (III).

\begin{proof}[Proof of \thref{L:Helly}]
By properties (P1) and (P2) of premeasures, and by our hypothesis, we obtain for any interval $I$ the inequality \[-\sigma_n(I) = \sigma_n(\T \setminus I) \leq h(|\T|).\] Since $\sigma_n(I) \leq h(|I|) \leq h(|\T|)$ also, we obtain the uniform bound $|\rho_{\sigma_n}(t)| \leq h(|\T|)$ for $t \in [0, 2\pi)$. Hence the sets $\{ \rho_{\sigma_n}(t)\}_n$ are bounded in $\R$, and so by the usual diagonal process, we may pass to a subsequence and ensure that $\rho_{\sigma_n}(r) \to c_r$, $n \to \infty$, for every rational $r \in (0, 2\pi]$. Here $c_r$ is a real number of modulus at most $h(|\T|)$. 
For $t \in (0, 2\pi]$, we define
\begin{equation} \label{E:RhoDef} \widetilde{\rho}(t) := \lim_{\substack{ r \to t \\ r < t}} \, c_r.
\end{equation} 
The limit does exist. Indeed, for rationals $r_1$, $r_2$ satisfying $r_1 < r_2 < t$ and the half-open interval $I_{r_1,r_2} = [e^{ir_1}, e^{ir_2})$ between them, we have 
\[c_{r_2} - c_{r_1} = \lim_{n \to \infty} \rho_{\sigma_n}(r_2) - \rho_{\sigma_n}(r_1) = \lim_{n \to \infty} \sigma_n(I_{r_1, r_2})\leq h(r_2-r_1) \]
If we let $r_2 \to t$ and $r_1 \to t$ from the left in such a way that \[ c_{r_2} \to \limsup_{\substack{ r \to t \\ r < t}} c_r, \quad c_{r_1} \to \liminf_{\substack{ r \to t \\ r < t}}  c_r\] then we obtain from the above inequality, and the hypothesis on $h$, that \[ \limsup_{\substack{ r \to t \\ r < t}} c_r - \liminf_{\substack{ r \to t \\ r < t}} c_r \leq 0.\]
Hence \eqref{E:RhoDef} is well-defined. 

We claim that $\widetilde{\rho}$ satisfies conditions (I) and (II) for being a primitive function of a premeasure which appear in \thref{P:PremeasurePrimitiveProp}. To verify (I), let $\{t_m\}_m$ be any sequence which converges to $t \in (0, 2\pi]$ from the left. Take rational $r_m$ such that $r_m < t_m$, $r_m \to t$, and $c_{r_m} > \widetilde{\rho}(t_m) - 1/m$. Then our definitions, and the already established existence of the limit in \eqref{E:RhoDef}, imply that \[ \widetilde{\rho}(t) = \lim_{m \to \infty} c_{r_m} \geq \limsup_{m \to \infty} \widetilde{\rho}(t_m). \] On the other hand, by \eqref{E:RhoDef} there exists a short open interval $I_\epsilon$ with right endpoint $t$ such that $c_r > \widetilde{\rho}(t) - \epsilon$ for every rational $r \in I_\epsilon$. If $t_m \in I_\epsilon$, then the definition \eqref{E:RhoDef} shows that $\widetilde{\rho}(t_m) \geq \widetilde{\rho}(t) - \epsilon$. Hence $\liminf_{m \to \infty} \widetilde{\rho}(t_m) \geq \widetilde{\rho}(t)$, and we have thus verified property (I).

We now show that (II) holds. Assume that the limit does not exist, and so that there exist two sequences $\{t_m\}_n$ and $\{t'_m\}_m$ which both converge to $t \in [0, 2\pi)$ from the right, but such that $\lim_{m \to \infty} \widetilde{\rho}(t_m) > \lim_{m \to \infty} \widetilde{\rho}(t'_m) + \delta$ for some $\delta > 0$. By our definition, this corresponds to existence of two sequences $\{r_m\}_m$, $\{r'_m\}_m$ of rational numbers converging to $t$ from the right, for which 
\[ \lim_{m \to \infty} c_{r_m} - c_{r'_m} > \delta. \]
We may assume that the sequences satisfy $r'_m < r_m$ for all $m$. Note that 
\[ \delta < c_{r_m} - c_{r'_m} = \lim_{n \to \infty} \rho_{\sigma_n}(r_m) - \rho_{\sigma_n}(r'_m) = \lim_{n \to \infty} \sigma_n(I_m), \quad I_m = [e^{ir'_m}, e^{ir_m}). \] Thus
\[ \delta < \lim_{n \to \infty} \sigma_n(I_m) \leq h(r'_m - r_m).\] Since $r'_m - r_m \to 0$ as $m \to \infty$, we obtain a contradiction to our hypothesis on $h$. It follows that (II) holds.

Let us now show that 
\begin{equation}
\label{E:JumpEqPrim}
\limsup_{n \to \infty} |p_{\sigma_n}(t) - \widetilde{\rho}(t)| \leq J_\rho(t), \quad t \in (0,2\pi).
\end{equation} Fix $\epsilon > 0$ and take rational $r$ for which we have $r < t$, $|t-r| < \epsilon$ and $|c_r - \widetilde{\rho}(t)| < \epsilon$. Then
\begin{align*}
\limsup_{n \to \infty} \, \rho_{\sigma_n}(t) - \widetilde{\rho}(t) &\leq \limsup_{n \to \infty}  \, \rho_{\sigma_n}(t) - c_r + \epsilon \\ &= \limsup_{n \to \infty}  \, \rho_{\sigma_n}(t) - \rho_{\sigma_n}(r) + \epsilon \\
&\leq h(\epsilon) + \epsilon.
\end{align*} Let $\epsilon \to 0$ to conclude that
\[ \limsup_{n \to \infty} \rho_{\sigma_n}(t) \leq \widetilde{\rho}(t), \quad t \in (0, 2\pi).\] Analogously, we may take $r_+ > t$, $|t - r_+| < \epsilon$, $|\rho(t+) - c_{r_+}| < \epsilon$, and estimate
\begin{align*}
\liminf_{n \to \infty} \,\widetilde{\rho}(t+) - \rho_{\sigma_n}(t) &\leq \liminf_{n \to \infty} \, c_{r_+} - \rho_{\sigma_n}(t) + \epsilon \\ &= \liminf_{n \to \infty} \, \rho_{\sigma_n}(r_+) - \rho_{\sigma_n}(t) + \epsilon \\
&\leq h(\epsilon) + \epsilon
\end{align*} which shows that \[ \widetilde{\rho}(t+) \leq \liminf_{n \to \infty} \rho_{\sigma_n}(t), \quad t \in (0,2\pi).\]
Combining the two inequalities, we arrive at 
\[ \widetilde{\rho}(t+) \leq \liminf_{n \to \infty} \rho_{\sigma_n}(t) \leq \limsup_{n \to \infty} \rho_{\sigma_n}(t) \leq \widetilde{\rho}(t), \quad t \in (0,2\pi)\] which implies \eqref{E:JumpEqPrim}.

Setting $c = \widetilde{\rho}(2\pi)$ and \[ \rho(t) = \widetilde{\rho}(t) - c, \quad t \in (0, 2\pi]\] we force condition (III), without breaking $(I)$ and $(II)$, and so we obtain a function $\rho = \rho_\sigma$ corresponding to a premeasure $\sigma$ as in \thref{P:PremeasurePrimitiveProp}. We deduce from \eqref{E:JumpEqPrim} that \eqref{E:AlmostConvJumpError} now holds, and moreover, at any point $t \in (0, 2\pi)$ at which $J_\rho(t) = 0$, we have that \[\lim_{n \to \infty} \rho_{\sigma_n}(t) = \rho_\sigma(t) + c.\] This implies the corresponding integral convergence. Indeed, let $g$ be a smooth function on $\T$. By the definition in \eqref{E:IntegratePremeasure}, uniform boundedness of $\rho_{\sigma_n}$ and the fact that $J_\rho(t)$ is non-zero only at a countable number of points (recall \thref{L:FiniteJumpLemma}), we obtain from the Dominated Convergence Theorem that
\begin{align*}
     \lim_{n \to \infty} \int_{[0,2\pi)} g(e^{it}) d\sigma_n(e^{it}) &= \lim_{n \to \infty} -\int_{[0,2\pi)} \Big(\frac{d}{dt}g(e^{it})\Big) \rho_{\sigma_n}(t)\,dt. \\
     &= -\int_{[0,2\pi)} \Big(\frac{d}{dt}g(e^{it})\Big) \rho_{\sigma}(t)\,dt - c\int_{[0,2\pi)}\frac{d}{dt}g(e^{it})\,dt \\
     &=-\int_{[0,2\pi)} \Big(\frac{d}{dt}g(e^{it})\Big) \rho_{\sigma}(t)\,dt \\
     &= \int_{[0,2\pi)} g(e^{it}) d\sigma(e^{it}).
\end{align*} We used that $g(e^{it})$ is periodic between the second and third lines, which makes the second integral on the second line vanish.

Finally, we verify that $\sigma(I) \leq h(|I|)$ for all intervals. By property (I) and (II) of premeasures, and continuity of $h$, it will suffice to show the bound for half-open intervals $I = [e^{it}, e^{it'})$ with endpoints $e^{it}$, $e^{it'}$ on which $\sigma$ carries no mass. Then the jumps $J_\rho(t)$, $J_\rho(t')$ are equal to $0$, and so from \eqref{E:AlmostConvJumpError} we obtain
\[ \sigma(I) = \rho(t) - \rho(t') = \lim_{n \to \infty} \rho_{\sigma_n}(t) - \rho_{\sigma_n}(t') = \lim_{n \to \infty} \sigma_n(I) \leq h(|I|).\]
\end{proof}

\subsection{Poisson integral estimates}

The Poisson integral bound \eqref{E:PoissonEstimate0} which we use in Section \ref{S:ThomsonDecompProofSec} holds also for normalized premeasures. Namely, if we have the upper estimate $\sigma(I) \leq h(|I|)$, then setting $g(e^{it}) = P_z(e^{it})$ to be the Poisson kernel,
\begin{equation}
    \label{E:PoissonKernelDef}
    P_z(e^{it}) := \frac{1-|z|^2}{|e^{it} - z|^2} =  \Re \Big(\frac{e^{it} +z}{e^{it}-z} \Big),
\end{equation} the same proof as in the case of $\sigma$ being an ordinary Borel measure (see \cite[p. 297]{havinbook}) gives us a constant $C = C(h) > 0$ for which it holds that
\begin{equation}
    \label{E:PoissonEstimate}
    \int_{[0,2\pi)} P_z(e^{it}) d\sigma(e^{it}) \leq \frac{C h(1-|z|)}{1-|z|}.
\end{equation} 
See also \cite[Exercise 4.9.1]{hedenmalmbergmanspaces}. A similar estimate holds of course also for non-normalized premeasures, but then the constant $C$ depends on the quantity $\sigma(\T)$ also: $C = C(h)\exp(\sigma(\T))$.

\bibliographystyle{plain}
\bibliography{mybib}

\end{document}